\patchcmd{\ps@headings}{\normalfont}{\sffamily\bfseries}{}{}
\patchcmd{\@secnumfont}{\mdseries}{}{}{}
\patchcmd{\maketitle}{\uppercasenonmath\shorttitle}{}{}{}
\patchcmd{\maketitle}{\MakeUppercase}{}{}{}
\patchcmd{\@maketitle}{\footnotesize\itshape\@dedicatory}{\sffamily\footnotesize\itshape\@dedicatory}{}{}
\patchcmd{\@settitle}{\uppercasenonmath\@title}{\Large\sffamily}{}{}
\patchcmd{\@setauthors}{\MakeUppercase{\authors}}{\large\sffamily\authors}{}{}
\patchcmd{\@setaddresses}{\scshape}{\sffamily\scshape}{}{}
\patchcmd{\@setaddresses}{\ttfamily}{\sffamily}{}{}
\patchcmd{\abstract}{\scshape}{\sffamily\bfseries}{}{}
\patchcmd{\section}{\normalfont\scshape\centering}{\large\sffamily\bfseries}{}{}
\patchcmd{\subsection}{\normalfont\bfseries}{\sffamily\bfseries}{}{}
\patchcmd{\subsubsection}{\normalfont\itshape}{\sffamily\itshape}{}{}
\patchcmd{\paragraph}{\normalfont}{\sffamily\itshape}{}{}
\patchcmd{\subparagraph}{\normalfont}{\sffamily}{}{}
\def\@captionheadfont{\sffamily\bfseries}
\newtheoremstyle{sfplain}{}{}%
{\itshape}{}%
{\sffamily\bfseries}{.}{.5em}{}%
\newtheoremstyle{sfdefinition}{}{}%
{}{}%
{\sffamily\bfseries}{.}{.5em}{}%
\newtheoremstyle{sfremark}{}{}%
{}{}%
{\sffamily\bfseries}{.}{.5em}{}%
\theoremstyle{sfplain}
\newtheorem{theorem}{Theorem}
\newtheorem{lemma}[theorem]{Lemma}
\newtheorem{proposition}[theorem]{Proposition}
\newtheorem{corollary}[theorem]{Corollary}
\theoremstyle{sfdefinition}
\theoremstyle{sfremark}
\newtheorem{remark}[theorem]{Remark}
\newtheorem{conjecture}[theorem]{Conjecture}
\let\expandafter\oldproof\csname\string\proof\endcsname
\let\oldendproof\endproof
\renewenvironment{proof}[1][\proofname]{%
  \oldproof[\sffamily\itshape #1]%
}{\oldendproof}
\newcommand*\linenomathpatch[1]{%
  \cspreto{#1}{\linenomath}%
  \cspreto{#1*}{\linenomath}%
  \csappto{end#1}{\endlinenomath}%
  \csappto{end#1*}{\endlinenomath}%
}
\definecolor{cmred}{RGB}{190,64,64}
\definecolor{cmyellow}{RGB}{190,156,64}
\definecolor{cmgreen}{RGB}{125,190,64}
\definecolor{cmcyan}{RGB}{64,190,188}
\definecolor{cmblue}{RGB}{64,97,190}
\definecolor{cmmagenta}{RGB}{126,64,190}
\definecolor{cmpink}{RGB}{186,64,190}
\definecolor{cmblack}{RGB}{47,47,47}
\definecolor{cmgrey}{RGB}{127,127,127}
\definecolor{cmwhite}{RGB}{207,207,207}
\begin{document}

\setlength{\mathindent}{3em}

\title[On Gautschi \& Stirling Identities, Asymptotics and Inequalities for the Pi (or Gamma) Function]{On Gautschi \& Stirling Identities, Asymptotics\\{}and Inequalities for the Pi (or Gamma) Function}

\author[M.~Schmidlin]{Marc~Schmidlin}
\address{Marc~Schmidlin. Universit\"at Basel, Departement Mathematik und Informatik. Spiegelgasse 1, 4051 Basel, Schweiz}
\email{marc.schmidlin@unibas.ch}

\thanks{\textit{Funding.} The work of the author was supported
  by the Swiss National Science Foundation (SNSF)
  through the project ``Data Driven High-Dimensional Approximation - Mathematical Analysis and Computing''
  (grant IZVSZ2\_229568).}

\dedicatory{Dedicated to the memory of George Matthewson Clarke}

\begin{abstract}
  We derive two-sided bounds
  for a class of Stirling-type asymptotic formulas
  for piecewise logarithmic interpolations of the pi function,
  and hence also for the factorials and the gamma functions.
  The bounds are derived by first proving some integral identity
  versions of Gautschi's inequality and a class of Stirling-type asymptotic formulas,
  and then bounding these integrals by asymptotically optimal bounds.
  Additionally, all the proofs given rely only on common elementary arguments
  and connect, generalise and possibly improve various results
  that have been published previously.
  Lastly, we provide numerical comparisons concerning the effectiveness
  and behaviour of the bounds and approximations in a graphical manner,
  which clearly indicate that the bounds are asymptotically optimal.
\end{abstract}

\keywords{
  Gautschi's inequality,
  Stirling's asymptotic formula,
  piecewise logarithmic interpolation,
  two-sided bounds,
  pi function,
  gamma function.}

\subjclass[2020]{33B15, 26D07}

\date{\today}

\maketitle

\section{Introduction}

De Moivre and Stirling introduced the well-known asymptotic formula for the factorials,
often refered to as Stirling's asymptotic formula,
\begin{equation*}
  n! \sim \sqrt{2 \pi} n^{n+\frac{1}{2}} e^{-n}
  \qquad\text{for \(n \to \infty\) with \(n \in \Nbbb\),}
\end{equation*}
in the works \cite{DeMoivre,DeMoivre2013,Stirling} circa three centuries ago.
Note that owing to the historical context, see \cite{Archibald26,Pearson24},
it might be more appropriate to call the formula De Moivre-Stirling's asymptotic formula;
however, for the sake of brevity, we will simply use the well-established naming.
Since then many further results and proofs of the asymptotic formula
or modifications of it have been derived and published;
as a small sample of such literature we point to \cite{Burnside1917,HL99,Liu2007,Mortici,Mortici2010,Robbins,Wang}
and the references therein.

We next choose to recall the definition of the pi function \(\Pi\) as introduced by Gauss using the formula
\begin{equation*}
  \Pi(x) \isdef \Gamma(x+1) \isdef \int_{0}^{\infty} t^x e^{-t} \dif\! t .
\end{equation*}
Then it is well known that the pi function is an interpolatory extension of the factorials defined for all natural
to all non-negative real numbers,
i.e. fulfils
\begin{equation*}
  n! = \Pi(n) \qquad\text{for all \(n \in \Nbbb\)}
\end{equation*}
and that Stirling's asymptotic formula holds over the non-negative real numbers;
that is
\begin{equation*}
  \Pi(x) \sim \sqrt{2 \pi} x^{x+\frac{1}{2}} e^{-x}
  \qquad\text{for \(x \to \infty\) with \(x \in \Rbbb_{\geq 0}\).}
\end{equation*}
We will continue to use this rather opionated choice of Gauss' pi function
rather than Legendre's gamma notation to state results,
as the shift introduced in the gamma notation vis-\`a-vis the factorials
will generally only complicate the formulas
and has been described as being ``void of any rationality'', see \cite{Lanczos64}.
Note that the results in literature, as for example cited earlier,
often directly consider the gamma function for results related to Stirling's asymptotic formula
rather than only considering the factorials
and hence these results are immediately available for the pi function,
and thus also the factorials, by shifting the formulas by one.

Importantly, apart from the asymptotic behaviour,
many results in the literature also cover (two-sided) bounds, i.e. inequalities,
stemming from the asymptotic formula,
or give versions that are identities rather than inequalities or asymptotics.
Indeed, there are arguably so many articles and notes published that it is
difficult if not impossible to have an overview of what already has been published.
Thus proofs and results related to Stirling's asymptotic formula
have been rediscovered and republished, cf.\ e.g.\ \cite{BC18}.

An unrelated result concerning the gamma function is commonly known
as Gautschi's inequality for the gamma function,
\begin{equation*}
  y^{1-s} < \frac{\Gamma(y+1)}{\Gamma(y+s)} < (y+1)^{1-s}
  \qquad\text{for \(y \in \Rbbb_{> 0}\) and \(s \in (0,1)\),}
\end{equation*}
see \cite{Gautschi59}.
By substituting \(y = x+1\) and \(s = \alpha\), using the pi function and reordering terms
this turns into
\begin{equation*}
  1 < \frac{\Pi(x) \cdot (x+1)^{\alpha}}{\Pi(x+\alpha)} < \group[\bigg]{\frac{x+2}{x+1}}^{1-\alpha}
  \quad\text{for \(x \in \Rbbb_{\geq 0}\) and \(\alpha \in (0,1)\),}
\end{equation*}
which demonstrates that Gautschi's inequality may be seen as the multiplicative mismatch between the pi function
and its piecewise logarithmic interpolation,
since
\begin{equation*}
  \Pi(x) \cdot (x+1)^{\alpha} = \group[\big]{\Pi(x)}^{1-\alpha} \group[\big]{\Pi(x+1)}^{\alpha}
\end{equation*}
holds.
However,
Gautschi's inequality is actually preceded by the stronger inequality
\begin{equation*}
  \group[\bigg]{\frac{y}{y+s}}^{1-s} \leq \frac{\Gamma(y+s)}{y^s \Gamma(y)} \leq 1
  \qquad\text{for \(y \in \Rbbb_{> 0}\) and \(s \in [0,1]\),}
\end{equation*}
already established in \cite{Wendel48} by Wendel.
Again, substituting \(y = x+1\) and \(s = \alpha\), using the pi function and reordering terms
yields
\begin{equation*}
  1 \leq \frac{\group[\big]{\Pi(x)}^{1-\alpha} \group[\big]{\Pi(x+1)}^{\alpha}}{\Pi(x+\alpha)} \leq \group[\bigg]{\frac{x+1+\alpha}{x+1}}^{1-\alpha}
\end{equation*}
for \(x \in \Rbbb_{\geq 0}\) and \(\alpha \in [0,1]\).

One therefore has
\begin{equation*}
  \group[\big]{\Pi(x)}^{1-\alpha} \group[\big]{\Pi(x+1)}^{\alpha} \sim \Pi(x+\alpha)
  \qquad\text{for \(x \to \infty\)}
\end{equation*}
and thus also
\begin{equation*}
  \group[\big]{\Pi(x)}^{1-\alpha} \group[\big]{\Pi(x+1)}^{\alpha} \sim \sqrt{2 \pi} (x+\alpha)^{x+\alpha+\frac{1}{2}} e^{-x-\alpha}
  \qquad\text{for \(x \to \infty\)}
\end{equation*}
with \(x \in \Rbbb_{\geq 0}\) and \(\alpha \in [0,1]\).
To the author's best knowledge,
such combined Stirling-Gautschi-type asymptotic formulas
for piecewise logarithmic interpolations of the factorials or the pi or gamma function
have not previously been analysed in detail.
As two-sided bounds for such asymptotic formulas turn out to be useful
when discussing the approximation rates of Gevrey smooth functions,
we derive such bounds in detail here,
with the aim of using the two-sided bounds for the logarithmically interpolated factorials
in future work.
As far as we are aware of them, we will reference previously published results
that relate to our results.
However, given the immense amount of results published concerning the asymptotics of the gamma function,
it seems unavoidable that this will be incomplete.

The article is structured as follows:
After stating some remarks concerning notation and preliminaries,
we then derive integral identities for Gautschi's inequality,
and thus for piecewise logarithmic interpolation of the pi function,
as well as for a class of Stirling approximations of the pi function,
which then combine to give a type of Gautschi-Stirling integral identity.
Following this, we use the integral formulas
to derive various two-sided asymptotically optimal bounds.
Finally, we give numerical comparisons showing the effectivity of these bounds
and state concluding remarks.

\section{Notation \& Preliminaries}

We denote the natural numbers including \(0\) by \(\Nbbb\)
and excluding \(0\) by \(\Nbbb_{> 0}\).
Similarily, we use \(\Rbbb_{\geq 0}\) to denote the non-negative real numbers
and \(\Rbbb_{> 0}\) for the set of positive real numbers.
For a real number \(x \in \Rbbb\) we let
\begin{equation*}
  \groupf{x} \isdef \max \groupb{k \in \Zbbb : k \leq x} \in \Zbbb
\end{equation*}
denote the floor of the number and
\begin{equation*}
  \groupb{x} \isdef x - \groupf{x} \in [0,1)
\end{equation*}
be its fractional part.

Additionally,
we let \(D \isdef \Rbbb_{\geq 0}\) and consider the pi function as \(\Pi \colon D \to \Rbbb_{> 0}\)
and define its piecewise logarithmic interpolation
\(\widehat{\Pi} \colon \widehat{D} \to \Rbbb_{> 0}\)
by letting
\begin{equation*}
  \widehat{\Pi}(x, \alpha)
  \isdef \group[\big]{\Pi(x)}^{1-\alpha} \group[\big]{\Pi(x+1)}^\alpha
  = \Pi(x) (x+1)^\alpha
\end{equation*}
for all \((x, \alpha) \in \widehat{D} \isdef \Rbbb_{\geq 0} \times {[0,1]}\).
The piecewise logarithmically interpolated factorials
are hence now given by
\begin{equation*}
  x \widehat{!}
  \isdef \widehat{\Pi}\group[\big]{\groupf{x}, \groupb{x}}
  = \group[\big]{\groupf{x}!}^{1-\groupb{x}} \group[\Big]{\group[\big]{\groupf{x}+1}!}^{\groupb{x}}
\end{equation*}
for any \(x \in \Rbbb_{\geq 0}\).
Note that with these definitions,
Wendel's version of Gautschi's inequality can now be stated as
\begin{align*}
  1
  \leq \frac{\widehat{\Pi}(x, \alpha)}{\Pi(x+\alpha)}
  \leq \group[\bigg]{\frac{x+1+\alpha}{x+1}}^{1-\alpha}
\end{align*}
with \(x \in \Rbbb_{\geq 0}\) and \(\alpha \in [0,1]\).

Lastly, we define
the parametrised class of Stirling-type asymptotic functions
\(S_d \colon \Rbbb_{> {-d}} \to \Rbbb_{> 0}\) by setting
\begin{equation*}
  S_d(x) \isdef \sqrt{2\pi} e^{-d} \group[\big]{x + d}^{x + \frac{1}{2}} e^{-x} ,
\end{equation*}
where \(d \in \Rbbb\), cf.\ e.g.\ \cite{Mortici}.
Given the domain of \(S_d\) it is useful to introduce the following two domains,
where \(\Pi(x)\) and \(S_d(x)\) or \(\widehat{\Pi}(x, \alpha)\) and \(S_d(x+\alpha)\)
are well-defined,
by letting
\begin{equation*}
  D_d \isdef \groupb{x \in D : x > {-d}}
  \quad\text{and}\quad
  \widehat{D}_d \isdef \groupb[\big]{(x, \alpha) \in \widehat{D} : x+\alpha > {-d}} .
\end{equation*}
Note that with these functions the classical asymptotic formula for the factorials
of Stirling, \cite{DeMoivre,DeMoivre2013,Stirling},
and the variant commonly associated with Burnside,
cf. \cite{Burnside1917}, but which, according to \cite{BC18}, was already known to Stirling,
now take the forms,
\begin{equation*}
  n! = \Pi(n) \sim S_0(n) \quad\text{and}\quad
  n! = \Pi(n) \sim S_{\frac{1}{2}}(n) \qquad\text{for \(n \to \infty\).}
\end{equation*}

\section{Basic Properties}

We first note that the constants we have used in defining \(S_d\)
are a consequence of Stirling's asymptotic formula
for the factorials and the Gamma function.

\begin{lemma}\label{lemma:g-Sd-asymp}
  We have that
  \begin{equation*}
    \Pi(x) \sim S_d(x)
    \quad\text{for \(x \to \infty\) and \(d \in \Rbbb\) bounded, with \(x \in D_d\).}
  \end{equation*}
\end{lemma}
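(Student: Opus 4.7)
The plan is to reduce the statement to the classical Stirling asymptotic $\Pi(x) \sim S_0(x)$ (which was already recalled in the introduction) and then to show that the ratio $S_d(x)/S_0(x)$ tends to $1$ as $x \to \infty$, uniformly for $d$ in a bounded set.

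First I would form the explicit quotient
\begin{equation*}
  \frac{S_d(x)}{S_0(x)}
  = e^{-d} \, \frac{(x+d)^{x+\frac{1}{2}}}{x^{x+\frac{1}{2}}}
  = e^{-d} \group[\bigg]{1 + \frac{d}{x}}^{x+\frac{1}{2}}
\end{equation*}
for $x > \max\{0,-d\}$. The factor $(1 + d/x)^{1/2}$ clearly tends to $1$ as $x \to \infty$, so the whole task reduces to showing $(1+d/x)^x \to e^d$. This is the standard exponential limit, most cleanly obtained by writing
\begin{equation*}
  \group[\bigg]{1 + \frac{d}{x}}^{x}
  = \exp\group[\bigg]{x \, \log\group[\Big]{1 + \frac{d}{x}}}
\end{equation*}
and Taylor-expanding $\log(1+u) = u + O(u^2)$ around $u = 0$, which gives $x \log(1+d/x) = d + O(d^2/x)$. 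Hence $S_d(x)/S_0(x) \to 1$.

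Combined with $\Pi(x)/S_0(x) \to 1$, this yields $\Pi(x)/S_d(x) \to 1$, i.e.\ the claimed asymptotic. The only point requiring a little care is the qualifier ``for $d \in \Rbbb$ bounded'': if $d$ is allowed to vary within a bounded interval $[-M,M]$, then for $x > 2M$ the estimate $|x \log(1+d/x) - d| \leq C M^2 / x$ holds with a constant $C$ independent of $d$, so the convergence $S_d(x)/S_0(x) \to 1$ is uniform in $d \in [-M,M]$. I expect this uniformity bookkeeping to be the only slightly non-routine step; everything else is a direct manipulation of the definition of $S_d$ together with the known Stirling asymptotic.
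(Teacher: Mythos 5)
Your proposal is correct and follows essentially the same route as the paper: both reduce the claim to the classical Stirling asymptotic $\Pi(x)\sim\sqrt{2\pi}\,x^{x+\frac12}e^{-x}=S_0(x)$ and then show the correction factor tends to $1$ via the standard exponential limit (the paper writes it as $e^d\bigl(1-\tfrac{d}{x+d}\bigr)^{x+d}\bigl(1-\tfrac{d}{x+d}\bigr)^{\frac12-d}\to 1$, which is just a rearrangement of your $e^{-d}\bigl(1+\tfrac{d}{x}\bigr)^{x+\frac12}\to 1$). Your explicit uniformity bookkeeping for bounded $d$ is a detail the paper leaves implicit, but it is not a different argument.
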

\begin{proof}
  We calculate
  \begin{align*}
    \frac{\Pi(x)}{S_d(x)}
    &= \frac{\Pi(x)}{\sqrt{2\pi} e^{-d} \group[\big]{x + d}^{x + \frac{1}{2}} e^{-x}}
    = \frac{\Pi(x)}{\sqrt{2\pi} x^{x + \frac{1}{2}} e^{-x}} e^d \group[\bigg]{\frac{x}{x+d}}^{x + \frac{1}{2}} \\
    &= \frac{\Pi(x)}{\sqrt{2\pi} x^{x + \frac{1}{2}} e^{-x}} \group[\bigg]{1 - \frac{d}{x+d}}^{\frac{1}{2} - d} e^d \group[\bigg]{1 - \frac{d}{x+d}}^{x+d} .
  \end{align*}
  Noting that
  \begin{equation*}
    e^d \group[\bigg]{1 - \frac{d}{x+d}}^{x+d} \sim 1 \qquad\text{for \(x \to \infty\) and \(d \in \Rbbb\) bounded}
  \end{equation*}
  and
  \begin{equation*}
    \group[\bigg]{1 - \frac{d}{x+d}}^{\frac{1}{2}-d} \sim 1 \qquad\text{for \(x \to \infty\) and \(d \in \Rbbb\) bounded}
  \end{equation*}
  hold, the assertion thus follows.
\end{proof}

Next we note that Wendel's or Gautschi's inequality
directly implies the following corollary:
\begin{corollary}\label{corollary:g-interpol-g-asymp}
  We have that
  \begin{equation*}
    \widehat{\Pi}(x, \alpha) \sim \Pi(x+\alpha)
    \quad\text{for \(x \to \infty\) with \((x, \alpha) \in \Rbbb_{\geq 0} \times {[0,1]}\).}
  \end{equation*}
\end{corollary}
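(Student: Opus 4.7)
The plan is to deduce the corollary directly from Wendel's version of Gautschi's inequality, which the paper has already stated immediately before the corollary in the form
\begin{equation*}
  1 \leq \frac{\widehat{\Pi}(x, \alpha)}{\Pi(x+\alpha)} \leq \group[\bigg]{\frac{x+1+\alpha}{x+1}}^{1-\alpha}
\end{equation*}
for \((x, \alpha) \in \Rbbb_{\geq 0} \times [0,1]\). Since the asymptotic assertion is exactly the statement that this ratio converges to \(1\), a squeeze argument on the two bounds is all that is needed.

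First, I would rewrite the upper bound as
\begin{equation*}
  \group[\bigg]{\frac{x+1+\alpha}{x+1}}^{1-\alpha}
  = \group[\bigg]{1 + \frac{\alpha}{x+1}}^{1-\alpha} ,
\end{equation*}
which is more convenient for passing to the limit. Next, I would observe that for \(\alpha \in [0,1]\) and any \(x \geq 0\) we have \(0 \leq \alpha/(x+1) \leq 1/(x+1)\) and \(0 \leq 1-\alpha \leq 1\), so
\begin{equation*}
  1 \leq \group[\bigg]{1 + \frac{\alpha}{x+1}}^{1-\alpha}
  \leq 1 + \frac{\alpha}{x+1}
  \leq 1 + \frac{1}{x+1} ,
\end{equation*}
where the second inequality uses that \(t \mapsto (1+u)^t\) is bounded above by \(1+u\) for \(t \in [0,1]\) and \(u \geq 0\) (equivalently, Bernoulli's inequality applied to the reciprocal). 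This estimate is uniform in \(\alpha\), and the right-hand side tends to \(1\) as \(x \to \infty\).

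Combining the two sides, for every \((x, \alpha) \in \Rbbb_{\geq 0} \times [0,1]\) we obtain
\begin{equation*}
  1 \leq \frac{\widehat{\Pi}(x, \alpha)}{\Pi(x+\alpha)} \leq 1 + \frac{1}{x+1} ,
\end{equation*}
whence the ratio converges to \(1\) as \(x \to \infty\), uniformly in \(\alpha \in [0,1]\). This is precisely the claimed asymptotic equivalence. There is no real obstacle here; the only minor point to get right is making the convergence uniform in \(\alpha\), which is exactly why the elementary estimate of \((1+u)^{1-\alpha}\) by \(1+u\) is convenient.
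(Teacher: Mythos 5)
Your proposal is correct and follows essentially the same route as the paper, which states the corollary as a direct consequence of Wendel's version of Gautschi's inequality without further argument. Your squeeze estimate \(1 \leq (1+\alpha/(x+1))^{1-\alpha} \leq 1 + 1/(x+1)\) simply makes explicit (and uniform in \(\alpha\)) the limit the paper takes for granted.
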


Therefore using Lemma~\ref{lemma:g-Sd-asymp}
and Corollary~\ref{corollary:g-interpol-g-asymp}
we also have
\begin{corollary}\label{corollary:g-interpol-Sd-asymp}
  We have that
  \begin{equation*}
    \widehat{\Pi}(x, \alpha) \sim S_{d}(x+\alpha)
    \quad\text{for \(x \to \infty\) and \(d \in \Rbbb\) bounded, with \((x, \alpha) \in \widehat{D}_d\).}
  \end{equation*}
\end{corollary}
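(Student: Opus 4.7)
The statement combines the two results immediately preceding it, so the natural plan is to factor the ratio of interest and apply each of them in turn. Specifically, on $\widehat{D}_d$ I would write
\begin{equation*}
  \frac{\widehat{\Pi}(x, \alpha)}{S_d(x+\alpha)}
  = \frac{\widehat{\Pi}(x, \alpha)}{\Pi(x+\alpha)} \cdot \frac{\Pi(x+\alpha)}{S_d(x+\alpha)} ,
\end{equation*}
which is well-defined since $x+\alpha > -d$ and all three functions take values in $\Rbbb_{>0}$.

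The first factor tends to $1$ as $x \to \infty$ by Corollary~\ref{corollary:g-interpol-g-asymp}. For the second factor I would apply Lemma~\ref{lemma:g-Sd-asymp} with $x$ replaced by $y := x+\alpha$. Since $\alpha \in [0,1]$ is bounded, $y \to \infty$ as $x \to \infty$, and $d$ is bounded by assumption, so the lemma gives $\Pi(y)/S_d(y) \to 1$. Multiplying the two limits yields the conclusion.

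\textbf{Main obstacle.} There is no real obstacle; the only point to keep in mind is that both asymptotic statements being combined are uniform in the remaining parameter ($\alpha \in [0,1]$ in one case, $d$ bounded in the other), so the composition of the two limits is legitimate. This justifies treating the $\sim$ symbol as transitive in the present setting and finishes the proof.
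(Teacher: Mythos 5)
Your proposal is correct and matches the paper's argument: the corollary is stated there as an immediate consequence of Lemma~\ref{lemma:g-Sd-asymp} and Corollary~\ref{corollary:g-interpol-g-asymp}, i.e.\ exactly the factorisation $\widehat{\Pi}(x,\alpha)/S_d(x+\alpha) = \bigl(\widehat{\Pi}(x,\alpha)/\Pi(x+\alpha)\bigr)\cdot\bigl(\Pi(x+\alpha)/S_d(x+\alpha)\bigr)$ you use. Your explicit remark on uniformity in $\alpha$ and $d$ is a welcome detail the paper leaves implicit.
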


Thus it is, for example, obvious that the piecewise logarithmically interpolated factorials
have the following asymptotic behaviour:
\begin{equation*}
  x\widehat{!} \sim S_{d}(x)
  \quad\text{for \(x \to \infty\) and \(d \in \Rbbb\) bounded, with \(x \in D_d\).}
\end{equation*}

\section{Gautschi's Inequality as an Integral Identity}\label{sec:GautschiIdentity}

We first define the logarithmic multiplicative mismatch between the pi function
and its piecewise logarithmic interpolations by letting \(\widehat{\imath} \colon \widehat{D} \to \Rbbb\)
be defined as
\begin{equation*}
  \widehat{\imath}(x, \alpha) \isdef \log \frac{\widehat{\Pi}(x, \alpha)}{\Pi(x+\alpha)} .
\end{equation*}
With this, we consider differences of the logarithmic multiplicative mismatch \(\widehat{\imath}\).
\begin{lemma}\label{lemma:loginterpol-diff}
  For any \((x, \alpha) \in \widehat{D}\),
  we have
  \begin{equation*}
    \widehat{\imath}(x, \alpha) - \widehat{\imath}(x+1, \alpha)
    = \int_1^2 \frac{\phi_\alpha(t)}{x+t} \dif\!t ,
  \end{equation*}
  where \(\phi_\alpha \colon \Rbbb \to \Rbbb\) is defined by
  \begin{equation*}
    \phi_\alpha(t) \isdef \begin{cases}
      1 - \alpha & \text{if \(\{t\} \leq \alpha\),} \\
      -\alpha & \text{if \(\{t\} > \alpha\).} 
    \end{cases}
  \end{equation*}
\end{lemma}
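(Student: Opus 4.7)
The plan is essentially a direct computation: expand both sides, simplify the left-hand side using the functional equation of \(\Pi\), and compute the right-hand side by splitting the integral at the point where \(\phi_\alpha\) switches values.

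First, from the definition of \(\widehat{\Pi}\), I would rewrite
\begin{equation*}
  \widehat{\imath}(x, \alpha) = \log \Pi(x) + \alpha \log(x+1) - \log \Pi(x+\alpha) .
\end{equation*}
Applying this at \(x\) and at \(x+1\), subtracting, and using the functional equation \(\Pi(y+1) = (y+1) \Pi(y)\) (so that \(\log \Pi(x+1) - \log \Pi(x) = \log(x+1)\) and \(\log \Pi(x+1+\alpha) - \log \Pi(x+\alpha) = \log(x+1+\alpha)\)), the \(\log \Pi\) terms telescope and I get
\begin{equation*}
  \widehat{\imath}(x, \alpha) - \widehat{\imath}(x+1, \alpha)
  = \log(x+1+\alpha) - (1-\alpha) \log(x+1) - \alpha \log(x+2) .
\end{equation*}

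For the right-hand side, I observe that for \(t \in [1, 2)\) the fractional part is \(\{t\} = t - 1\), so \(\{t\} \leq \alpha\) iff \(t \leq 1+\alpha\). Hence \(\phi_\alpha\) equals \(1-\alpha\) on \([1, 1+\alpha]\) and \(-\alpha\) on \((1+\alpha, 2]\) (up to a measure-zero set at \(t=2\)). I would therefore split
\begin{equation*}
  \int_1^2 \frac{\phi_\alpha(t)}{x+t} \dif\! t
  = (1-\alpha) \int_1^{1+\alpha} \frac{\dif\! t}{x+t} - \alpha \int_{1+\alpha}^2 \frac{\dif\! t}{x+t} ,
\end{equation*}
evaluate the two antiderivatives, and collect terms to obtain precisely \(\log(x+1+\alpha) - (1-\alpha)\log(x+1) - \alpha \log(x+2)\), matching the expression for the left-hand side.

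There is no real obstacle here beyond careful bookkeeping; the only subtle point is correctly identifying the subintervals of \([1,2]\) on which \(\phi_\alpha\) is constant, and handling the boundary cases \(\alpha = 0\) or \(\alpha = 1\), where one of the two pieces degenerates to an empty interval and the identity still holds trivially.
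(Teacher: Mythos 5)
Your proposal is correct and follows essentially the same route as the paper: both reduce the left-hand side via the functional equation \(\Pi(y+1)=(y+1)\Pi(y)\) to \(\log(x+1+\alpha)-(1-\alpha)\log(x+1)-\alpha\log(x+2)\) and match it against the piecewise-constant integral. The only cosmetic difference is direction: the paper rewrites the log differences as integrals over \([0,1]\) and then invokes the \(1\)-periodicity of \(\phi_\alpha\) to shift to \([1,2]\), whereas you evaluate the \([1,2]\) integral directly by splitting at \(t=1+\alpha\); both are sound.
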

\begin{proof}
  We begin by calculating
  \begin{align*}
    \widehat{\imath}(x, \alpha) - \widehat{\imath}(x+1, \alpha)
    &= \log \frac{\widehat{\Pi}(x, \alpha) \Pi(x+1+\alpha)}{\Pi(x+\alpha)\widehat{\Pi}(x+1, \alpha)} \\
    &= \log \frac{\Pi(x) (x+1)^\alpha \Pi(x+\alpha) (x+1+\alpha)}{\Pi(x+\alpha) \Pi(x) (x+1) (x+2)^\alpha} \\
    &= \log \frac{(x+1+\alpha)^{1-\alpha} (x+1+\alpha)^\alpha}{(x+1)^{1-\alpha} (x+2)^\alpha} \\
    &= (1-\alpha) \log\frac{x+1+\alpha}{x+1} + \alpha \log\frac{x+1+\alpha}{x+2}
  \end{align*}
  where we have used that
  \begin{gather*}
    \Pi(x+1) = (x+1)\Pi(x) ,
    \quad
    \Pi(x+2) = (x+2)(x+1)\Pi(x) \\
    \text{and}\quad
    \Pi(x+1+\alpha) = (x+1+\alpha)\Pi(x+\alpha)
  \end{gather*}
  hold.
  Next, using
  \begin{equation*}
    \log\frac{x+1+\alpha}{x+1+s}
    = \log(x+1+\alpha) - \log(x+1+s)
    = \int_s^\alpha \frac{1}{x+1+t} \dif\!t
  \end{equation*}
  with \(s \in \{0, 1\}\)
  we arrive at
  \begin{align*}
    \widehat{\imath}(x, \alpha) - \widehat{\imath}(x+1, \alpha)
    &= (1 - \alpha) \int_0^\alpha \frac{1}{x+1+t} \dif\!t
    + \alpha \int_1^\alpha \frac{1}{x+1+t} \dif\!t \\
    &= \int_0^\alpha \frac{1 - \alpha}{x+1+t} \dif\!t
    + \int_\alpha^1 \frac{-\alpha}{x+1+t} \dif\!t \\
    &= \int_0^1 \frac{\phi_\alpha(t)}{x+1+t} \dif\!t .
  \end{align*}
  As \(\phi_\alpha\) is \(1\)-periodic, this
  obviously implies the asserted formula.
\end{proof}

With the formula giving these differences as an integral,
we can also give a formula for the logarithmic multiplicative mismatch \(\widehat{\imath}\)
as an integral.
\begin{lemma}\label{lemma:loginterpol}
  For any \((x, \alpha) \in \widehat{D}\),
  we have
  \begin{equation*}
    \widehat{\imath}(x, \alpha)
    = \int_1^{\infty} \frac{\phi_\alpha(t)}{x+t} \dif\!t ,
  \end{equation*}
  where the integral may either be considered as a Lebesgue integral
  or an improper Riemann integral.
\end{lemma}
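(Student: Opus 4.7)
The plan is to telescope the identity in Lemma~\ref{lemma:loginterpol-diff} over integer shifts of~\(x\) and then pass to the limit, using the asymptotic agreement from Corollary~\ref{corollary:g-interpol-g-asymp} to identify the tail.

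First, I would iterate Lemma~\ref{lemma:loginterpol-diff} at the points \(x,x+1,\ldots,x+N-1\), which gives
\begin{equation*}
  \widehat{\imath}(x,\alpha) - \widehat{\imath}(x+N,\alpha)
  = \sum_{k=0}^{N-1} \int_{1}^{2} \frac{\phi_\alpha(t)}{x+k+t} \dif\!t .
\end{equation*}
Next I would exploit the fact that \(\phi_\alpha\) is \(1\)-periodic: substituting \(s = t + k\) in the \(k\)-th summand and using \(\phi_\alpha(s-k) = \phi_\alpha(s)\), the finite sum collapses into one integral,
\begin{equation*}
  \sum_{k=0}^{N-1} \int_{1}^{2} \frac{\phi_\alpha(t)}{x+k+t} \dif\!t
  = \sum_{k=0}^{N-1} \int_{k+1}^{k+2} \frac{\phi_\alpha(s)}{x+s} \dif\!s
  = \int_{1}^{N+1} \frac{\phi_\alpha(s)}{x+s} \dif\!s .
\end{equation*}

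It remains to let \(N \to \infty\). On the left, Corollary~\ref{corollary:g-interpol-g-asymp} gives \(\widehat{\Pi}(x+N,\alpha)/\Pi(x+N+\alpha) \to 1\), hence \(\widehat{\imath}(x+N,\alpha) \to 0\), so the left-hand side tends to \(\widehat{\imath}(x,\alpha)\). Consequently the right-hand side converges as well, which establishes existence of the improper Riemann integral along integer upper limits together with the claimed value.

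To finish, I would extend the limit to arbitrary (not necessarily integer) upper endpoints and address the Lebesgue interpretation. For any \(T \in [N+1, N+2]\) the tail bound
\begin{equation*}
  \group[\bigg]{\int_{N+1}^{T} \frac{\phi_\alpha(s)}{x+s}\dif\!s}
  \leq \frac{1}{x+N+1}
\end{equation*}
(using \(\abs{\phi_\alpha}\leq 1\)) shows \(\int_1^T \to \widehat{\imath}(x,\alpha)\) as \(T\to\infty\), which is the improper Riemann statement. Since \(\phi_\alpha\) is bounded and piecewise constant, the Lebesgue integral over any bounded subinterval of \([1,\infty)\) coincides with the corresponding Riemann integral, and the limit as the upper bound tends to infinity is therefore well defined under either convention. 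The main subtlety in the proof is really the last one: the integrand is not absolutely integrable on \([1,\infty)\) for \(\alpha \in (0,1)\), so the convergence is conditional and genuinely depends on the cancellation from the zero mean of \(\phi_\alpha\) on each period, which is exactly what the telescoping identity encodes.
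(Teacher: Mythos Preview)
Your proof is correct and follows exactly the same telescoping-and-limit strategy as the paper's, using Lemma~\ref{lemma:loginterpol-diff} together with Corollary~\ref{corollary:g-interpol-g-asymp}. You are in fact more careful than the paper in extending the limit to non-integer upper endpoints and in flagging that the integrand is only conditionally integrable for \(\alpha\in(0,1)\), so that the ``Lebesgue'' reading must be understood as an improper limit.
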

\begin{proof}
  For any \(k \in \Nbbb\),
  we have
  \begin{equation*}
    \widehat{\imath}(x, \alpha) - \widehat{\imath}(x+k, \alpha)
    = \int_{1}^{1+k} \frac{\phi_\alpha(t)}{x+t} \dif\!t
  \end{equation*}
  by Lemma~\ref{lemma:loginterpol-diff}.
  Since we know from Lemma~\ref{corollary:g-interpol-g-asymp} that
  \begin{equation*}
    \lim_{k \to \infty} \widehat{\imath}(x+k, \alpha) = 0
  \end{equation*}
  must hold, the assertion follows by letting \(k\) tend to infinity.
\end{proof}

From Lemma~\ref{lemma:loginterpol} 
we arrive directly at the following integral identity version of Gautschi's inequality:
\begin{theorem}\label{theorem:loginterpol}
  For any \((x, \alpha) \in \widehat{D}\),
  we have
  \begin{equation*}
    \frac{\widehat{\Pi}(x, \alpha)}{\Pi(x + \alpha)} = \exp\group[\Bigg]{\int_1^{\infty} \frac{\phi_\alpha(t)}{x+t}  \dif\!t} ,
  \end{equation*}
  or restating the identity in the form commonly used for stating Gautschi's inequality,
  \begin{equation*}
    \frac{\Gamma(y+1)}{\Gamma(y+s)} = y^{1-s} \exp\group[\Bigg]{\int_0^{\infty} \frac{\phi_s(t)}{y+t}  \dif\!t}
  \end{equation*}
  for any \(y \in \Rbbb_{\geq 1}\) and \(s \in {[0,1]}\).
\end{theorem}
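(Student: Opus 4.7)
The plan is to observe that the asserted identity is essentially an exponentiation of Lemma~\ref{lemma:loginterpol}, combined with a cosmetic change of variables to recover the Gautschi-style formulation.

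First, by the definition $\widehat{\imath}(x,\alpha) \isdef \log \frac{\widehat{\Pi}(x,\alpha)}{\Pi(x+\alpha)}$ and the integral representation established in Lemma~\ref{lemma:loginterpol}, I would simply apply the exponential map to both sides, which directly yields the first asserted identity.

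For the gamma-function restatement, I would substitute $y = x+1 \in \Rbbb_{\geq 1}$ and $s = \alpha \in {[0,1]}$. Using $\Pi(z) = \Gamma(z+1)$ together with the recurrence $\Gamma(y+1) = y \Gamma(y)$, the left-hand quotient rewrites as
\begin{equation*}
  \frac{\widehat{\Pi}(x,\alpha)}{\Pi(x+\alpha)}
  = \frac{\Gamma(y)\, y^s}{\Gamma(y+s)}
  = y^{s-1}\, \frac{\Gamma(y+1)}{\Gamma(y+s)} ,
\end{equation*}
and rearranging produces the prefactor $y^{1-s}$. For the integral, I would apply the change of variables $t \mapsto t+1$, shifting the domain from $[1,\infty)$ to $[0,\infty)$, and then invoke the $1$-periodicity of $\phi_s$ (already used at the end of the proof of Lemma~\ref{lemma:loginterpol-diff}) to replace $\phi_s(t+1)$ by $\phi_s(t)$ in the integrand, matching the desired form.

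There is no real obstacle, since all the substantive work has been carried out in the preceding two lemmas; the present theorem is essentially a repackaging. The only minor care required is verifying that the shift interacts correctly with periodicity and checking the endpoint cases $\alpha \in \{0,1\}$, where both sides reduce to $1$ because $\phi_0$ and $\phi_1$ vanish almost everywhere and the ratio $\widehat{\Pi}(x,\alpha)/\Pi(x+\alpha)$ is trivially equal to $1$.
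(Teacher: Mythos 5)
Your proposal is correct and follows essentially the same route as the paper, which states the theorem as a direct consequence of Lemma~\ref{lemma:loginterpol}: exponentiate the integral representation of \(\widehat{\imath}\), then obtain the Gautschi-style form via the substitution \(y = x+1\), \(s = \alpha\), the recurrence \(\Gamma(y+1) = y\,\Gamma(y)\), and a unit shift of the integration variable justified by the \(1\)-periodicity of \(\phi_s\). Your checks of the endpoint cases \(\alpha \in \{0,1\}\) are harmless but not needed, since they are already covered by the lemma.
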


\section{Stirling's Asymptotic Formulas as Integral Identities}\label{sec:StirlingIdentity}

In this section we will derive integral identities for the asymptotic formulas.
Especially,
we are deriving a generalisation of the types of integral identities
found in \cite{Liu2007} and \cite{Mortici2010} for the asymptotic formulas for the factorials.

For any \(d \in \Rbbb\), we define the function \(m_d \colon D_d \to \Rbbb\) defined by
\begin{equation*}
  m_d(x) \isdef \log \frac{\Pi(x)}{S_d(x)} .
\end{equation*}
Note that \(m_d(x)\) is thus simply the logarithm of the multiplicative
mismatch between the pi function and the asymptotic formulas.
Rewriting this we have that
\begin{equation*}
  \Pi(x) = S_d(x) \exp\group[\big]{m_d(x)}
\end{equation*}
holds for all \(x \in D_d\) where \(d \in \Rbbb\).
Note that Lemma~\ref{lemma:g-Sd-asymp} directly implies that
\begin{equation*}
  m_d(x) \sim 0
  \quad\text{for \(x \to \infty\) and \(d \in \Rbbb\) bounded, with \(x \in D_d\).}
\end{equation*}

We now first derive the following relation between the logarithmic multiplicative mismatches \(m_d\).
\begin{lemma}\label{lemma:md-to-md}
  For any \(c,d \in \Rbbb\) and \(x \in D_c \cap D_d\),
  we have
  \begin{equation*}
    m_d(x) = m_c(x) + \int_d^c \frac{\tfrac{1}{2} - t}{x + t} \dif\!t .
  \end{equation*}
\end{lemma}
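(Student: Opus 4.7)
The plan is to reduce the claim to a direct computation on $S_d$, since $\Pi(x)$ appears on both sides and cancels. Specifically, from the definition
\begin{equation*}
  m_d(x) - m_c(x) = \log\frac{\Pi(x)}{S_d(x)} - \log\frac{\Pi(x)}{S_c(x)} = \log\frac{S_c(x)}{S_d(x)},
\end{equation*}
so the whole statement reduces to showing
\begin{equation*}
  \log\frac{S_c(x)}{S_d(x)} = \int_d^c \frac{\tfrac{1}{2}-t}{x+t}\,\mathrm{d}t.
\end{equation*}
That the gamma factor drops out is the key observation; once it is made, no analytic subtleties remain as long as $x \in D_c \cap D_d$ (so that $x+t > 0$ along the whole interval of integration, regardless of whether $c \leq d$ or $c \geq d$).

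Next I would plug in the explicit formula $S_d(x) = \sqrt{2\pi}\,e^{-d}(x+d)^{x+1/2}e^{-x}$ and simplify. The prefactors $\sqrt{2\pi}$ and $e^{-x}$ cancel, leaving
\begin{equation*}
  \log\frac{S_c(x)}{S_d(x)} = (d-c) + \group[\big]{x+\tfrac{1}{2}}\log\frac{x+c}{x+d}.
\end{equation*}
I would rewrite the logarithm on the right using the elementary identity
\begin{equation*}
  \log\frac{x+c}{x+d} = \int_d^c \frac{1}{x+t}\,\mathrm{d}t,
\end{equation*}
which is valid for any $c,d$ with $x+t > 0$ on the interval of integration.

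The final step is the partial-fraction-style decomposition
\begin{equation*}
  \frac{\tfrac{1}{2}-t}{x+t} = \frac{(x+\tfrac{1}{2}) - (x+t)}{x+t} = \frac{x+\tfrac{1}{2}}{x+t} - 1.
\end{equation*}
Integrating this from $d$ to $c$ yields exactly $(x+\tfrac{1}{2})\log\tfrac{x+c}{x+d} - (c-d)$, which matches the expression for $\log\tfrac{S_c(x)}{S_d(x)}$ obtained above. There is no real obstacle; the only thing to be slightly careful about is to carry the sign convention on $\int_d^c$ consistently when $c < d$, which is handled automatically by the standard definition of oriented integrals. This rewriting as a $(\tfrac{1}{2}-t)$ numerator is precisely what prepares the identity for use in the later Stirling-type integral representations.
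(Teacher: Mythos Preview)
Your proof is correct and follows essentially the same approach as the paper: both reduce $m_d(x)-m_c(x)$ to $\log\frac{S_c(x)}{S_d(x)}$, compute this explicitly as $(d-c)+(x+\tfrac{1}{2})\log\frac{x+c}{x+d}$, and then use the identity $\frac{\tfrac{1}{2}-t}{x+t}=\frac{x+\tfrac{1}{2}}{x+t}-1$ together with the integral representation of the logarithm. The only cosmetic difference is that the paper splits the integral $\int_d^c \frac{x+1/2}{x+t}\,\mathrm{d}t$ forward, whereas you split the target integral $\int_d^c \frac{1/2-t}{x+t}\,\mathrm{d}t$ backward; the underlying computation is identical.
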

\begin{proof}
  We start by calculating
  \begin{equation*}
    m_d(x) - m_c(x)
    = \log \frac{\Pi(x)}{S_d(x)} - \log \frac{\Pi(x)}{S_c(x)}
    = \log \frac{S_c(x)}{S_d(x)} .
  \end{equation*}
  Evaluating this we arrive at
  \begin{align*}
    \log \frac{S_c(x)}{S_d(x)}
    &= \log \frac{\sqrt{2\pi} e^{-c} \group[\big]{x + c}^{x + \frac{1}{2}} e^{-x}}{\sqrt{2\pi} e^{-d} \group[\big]{x + d}^{x + \frac{1}{2}} e^{-x}} \\
    &= d - c + \group[\big]{x + \tfrac{1}{2}} \group[\big]{\log(x+c) - \log(x+d)} .
  \end{align*}
  Now, using the fact that
  \begin{equation*}
    \log(x+d) = \log(x+c) + \int_c^d \frac{1}{x+t} \dif\!t
  \end{equation*}
  holds,
  we have
  \begin{equation*}
    \log \frac{S_c(x)}{S_d(x)}
    = d - c + \int_d^c \frac{x+\tfrac{1}{2}}{x+t} \dif\!t
    = d - c + \int_d^c \frac{x+t+\tfrac{1}{2}-t}{x+t} \dif\!t
  \end{equation*}
  and this now directly implies
  \begin{equation*}
    m_d(x) - m_c(x)
    = d - c + \int_d^c 1 \dif\!t + \int_d^c \frac{\tfrac{1}{2}-t}{x+t} \dif\!t
    = \int_d^c \frac{\tfrac{1}{2}-t}{x+t} \dif\!t . \qedhere
  \end{equation*}
\end{proof}

Next,
we consider differences of the logarithmic multiplicative mismatches \(m_d\)
when \(d \in {[0,1]}\).
\begin{lemma}\label{lemma:md-diff}
  For any \(d \in {[0,1]}\) and \(x \in D_d\),
  we have
  \begin{equation*}
    m_d(x) - m_d(x+1) = \int_{d}^{1+d} \frac{\frac{1}{2} - \groupb{t}}{x+t} \dif\!t .
  \end{equation*}
\end{lemma}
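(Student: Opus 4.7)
The plan is to derive the identity from Lemma~\ref{lemma:md-to-md}. Applying that lemma with the intermediate value $c = 1$ to the first term and with $c = 0$ (and argument $x+1$) to the second yields
\[
  m_d(x) = m_1(x) + \int_d^1 \frac{\tfrac{1}{2} - t}{x+t}\,dt
  \quad\text{and}\quad
  m_d(x+1) = m_0(x+1) - \int_0^d \frac{\tfrac{1}{2} - t}{(x+1)+t}\,dt.
\]
Subtracting these expresses $m_d(x) - m_d(x+1)$ as the sum of the difference $m_1(x) - m_0(x+1)$ and two integrals that still need to be joined.

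I would then show that the difference $m_1(x) - m_0(x+1)$ vanishes. A direct computation from the definition of $S_d$ gives $S_0(x+1) = (x+1)\,S_1(x)$, and combined with the functional equation $\Pi(x+1) = (x+1)\Pi(x)$ this at once implies $m_1(x) = m_0(x+1)$. Next, I would unify the two remaining integrals: the change of variables $u = t+1$ in the second integral turns it into $\int_1^{1+d} \frac{\tfrac{3}{2} - u}{x+u}\,du$. Since $d \in [0,1]$ we have $\{t\} = t$ on $[d,1]$ and $\{t\} = t-1$ on $[1,1+d]$, so that $\tfrac{1}{2} - \{t\}$ reproduces $\tfrac{1}{2} - t$ on the first subinterval and $\tfrac{3}{2} - t$ on the second. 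The two integrals hence glue together into the single expression $\int_d^{1+d} \frac{\tfrac{1}{2} - \{t\}}{x+t}\,dt$, which is precisely the right-hand side of the claimed identity.

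The argument is structurally straightforward once Lemma~\ref{lemma:md-to-md} is invoked; the main point requiring care is the vanishing of $m_1(x) - m_0(x+1)$, whose verification hinges on the precise cancellation between the factor $e^{-d}$ in the definition of $S_d$ and the shift $x \mapsto x+1$ in the exponent of $(x+d)^{x+1/2}$. A more computational alternative would be to directly expand $m_d(x) - m_d(x+1) = \log \frac{\Pi(x)\,S_d(x+1)}{\Pi(x+1)\,S_d(x)}$ and split the right-hand integral at $t = 1$ to match terms, but this amounts to essentially the same calculation with worse bookkeeping, so I would prefer the first route.
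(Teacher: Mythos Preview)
Your proof is correct and takes a genuinely different route from the paper's. The paper proceeds exactly via what you dismiss as the ``more computational alternative'': it expands
\[
  m_d(x) - m_d(x+1) = \log \frac{\Pi(x)\,S_d(x+1)}{\Pi(x+1)\,S_d(x)}
\]
directly, uses $\Pi(x+1) = (x+1)\Pi(x)$, and then rewrites each of $\log(x+1+d)$, $\log(x+1)$, and the constant $1$ as integrals over $[d,1]$ and $[1,1+d]$ before collecting terms. Your approach instead factors the computation through Lemma~\ref{lemma:md-to-md}, reducing everything to the clean identity $m_1(x) = m_0(x+1)$ (equivalently $S_0(x+1) = (x+1)S_1(x)$), after which the two integrals glue together by a single substitution. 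What your route buys is structural transparency and reuse of Lemma~\ref{lemma:md-to-md}; what the paper's route buys is self-containment, since it does not invoke that earlier lemma at all. Both involve essentially the same amount of work, just organised differently.
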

\begin{proof}
  We begin by calculating
  \begin{align*}
    m_d(x) - m_d(x+1)
    &= \log \frac{\Pi(x) S_d(x+1)}{S_d(x) \Pi(x+1)} \\
    &= \log \frac{\sqrt{2\pi} e^{-d} \group[\big]{x+1 + d}^{x+1 + \frac{1}{2}} e^{-x-1}}{(x+1) \sqrt{2\pi} e^{-d} \group[\big]{x + d}^{x + \frac{1}{2}} e^{-x}} ,
  \end{align*}
  where we have used that \(\Pi(x+1) = (x+1)\Pi(x)\).
  Simplifying this we arrive at
  \begin{multline*}
    m_d(x) - m_d(x+1) \\
    = \group[\big]{x+1 + \tfrac{1}{2}} \log(x+1 + d) - \group[\big]{x + \tfrac{1}{2}} \log(x + d) - \log(x+1) - 1 .
  \end{multline*}
  We now note that we may rewrite the various terms as follows:
  \begin{align*}
    \log(x+1 + d) &= \log(x+d) + \int_d^1 \frac{1}{x+t} \dif\!t + \int_1^{1+d} \frac{1}{x+t} \dif\!t ,\\
    \log(x+1) &= \log(x+d) + \int_d^1 \frac{1}{x+t} \dif\!t  \\
    \text{and}\qquad 1 &= \int_d^1 \frac{x+t}{x+t} \dif\!t + \int_1^{1+d} \frac{x+t}{x+t} \dif\!t .
  \end{align*}
  Using the rewritten terms and simplifying leads to
  \begin{equation*}
    m_d(x) - m_d(x+1)
    = \int_d^1 \frac{\tfrac{1}{2}-t}{x+t} \dif\!t + \int_1^{1+d} \frac{\tfrac{1}{2} - (t-1)}{x+t} \dif\!t
  \end{equation*}
  and as \(d \in {[0,1]}\) holds,
  we thus arrive at the formula
  \begin{equation*}
    m_d(x) - m_d(x+1)
    = \int_d^{1+d} \frac{\tfrac{1}{2} - \groupb{t}}{x+t} \dif\!t . \qedhere
  \end{equation*}
\end{proof}

With the formula giving the differences of the logarithmic multiplicative mismatches \(m_d\)
as an integral, when \(d \in {[0,1]}\),
we can also give a formula for the logarithmic multiplicative mismatch \(m_d\)
in the form of an integral in this case.
\begin{lemma}\label{lemma:md-(0,1)-int}
  For any \(d \in {[0,1]}\) and \(x \in D_d\),
  we have
  \begin{equation*}
    m_d(x) = \int_{d}^{\infty} \frac{\frac{1}{2} - \groupb{t}}{x+t} \dif\!t ,
  \end{equation*}
  where the integral may either be considered as a Lebesgue integral
  or an improper Riemann integral.
\end{lemma}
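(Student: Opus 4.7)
The plan is to mimic the telescoping argument used in the proof of Lemma~\ref{lemma:loginterpol}. The key ingredients will be Lemma~\ref{lemma:md-diff} (which expresses one-step differences of $m_d$ as an integral with the periodic kernel $\tfrac12 - \{t\}$) and the fact that $m_d(x) \to 0$ as $x \to \infty$, which follows from Lemma~\ref{lemma:g-Sd-asymp}.

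More concretely, first I would iterate Lemma~\ref{lemma:md-diff}: for any $k \in \Nbbb_{> 0}$, writing the telescoping sum and then using that $\{t\}$ is $1$-periodic (so a shift $t \mapsto t + j$ does not change $\tfrac12 - \{t\}$), I would obtain
\begin{equation*}
  m_d(x) - m_d(x+k)
  = \sum_{j=0}^{k-1} \int_{d}^{1+d} \frac{\tfrac{1}{2} - \{t\}}{x+j+t} \dif\!t
  = \int_{d}^{k+d} \frac{\tfrac{1}{2} - \{t\}}{x+t} \dif\!t.
\end{equation*}
Next, since $m_d(x+k) \to 0$ as $k \to \infty$, letting $k$ tend to infinity along the integers would give
\begin{equation*}
  m_d(x) = \lim_{k \to \infty} \int_{d}^{k+d} \frac{\tfrac{1}{2} - \{t\}}{x+t} \dif\!t,
\end{equation*}
which is exactly the claimed improper Riemann integral.

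The main (and only real) obstacle is showing that this limit equals the integral over $[d,\infty)$ in either the improper Riemann or Lebesgue sense, since the integrand is not absolutely integrable on $[d,\infty)$. For the improper Riemann interpretation, I would need to upgrade the limit along the arithmetic subsequence $k + d$ to a limit along arbitrary upper endpoints $T \to \infty$; this follows by observing that on each unit interval $[n+d, n+1+d]$ the function $\tfrac12 - \{t\}$ has mean zero, so the tail integral $\int_{n+d}^{T} \tfrac{\tfrac12 - \{t\}}{x+t}\dif\!t$ is uniformly bounded by $\tfrac{1}{4(x+n+d)}$, hence tends to $0$ as $n \to \infty$. For the Lebesgue interpretation, I would instead integrate by parts (or use a single Abel-type summation): writing $P(t) \isdef \int_d^t (\tfrac12 - \{s\})\dif\!s$, the primitive $P$ is bounded and $1$-periodic, so
\begin{equation*}
  \int_{d}^{T} \frac{\tfrac{1}{2} - \{t\}}{x+t} \dif\!t
  = \frac{P(T)}{x+T} + \int_{d}^{T} \frac{P(t)}{(x+t)^2} \dif\!t,
\end{equation*}
and the right-hand side is manifestly Lebesgue-integrable on $[d,\infty)$ and its limit as $T \to \infty$ agrees with the value established above. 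This gives the asserted identity in both senses and completes the proof.
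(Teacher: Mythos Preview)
Your proposal is correct and follows essentially the same approach as the paper: telescope using Lemma~\ref{lemma:md-diff} to obtain $m_d(x) - m_d(x+k) = \int_{d}^{k+d} \tfrac{\frac{1}{2} - \{t\}}{x+t}\,\dif t$, then let $k\to\infty$ using $m_d(x+k)\to 0$ from Lemma~\ref{lemma:g-Sd-asymp}. The paper leaves the passage from the subsequence limit to the full improper Riemann and Lebesgue interpretations implicit, whereas you spell out both carefully via the bounded periodic primitive $P$; this extra rigor is welcome but not a different method.
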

\begin{proof}
  For any \(k \in \Nbbb\),
  we have
  \begin{equation*}
    m_d(x) - m_d(x+k) = \int_{d}^{k+d} \frac{\frac{1}{2} - \groupb{t}}{x+t} \dif\!t
  \end{equation*}
  by Lemma~\ref{lemma:md-diff}.
  Since we know from Lemma~\ref{lemma:g-Sd-asymp} that
  \begin{equation*}
    \lim_{k \to \infty} m_d(x+k) = 0
  \end{equation*}
  must hold, the assertion follows by letting \(k\) tend to infinity.
\end{proof}

Now,
we can derive the following formula for the logarithmic multiplicative mismatches \(m_d\)
using two integrals directly as a corollary of Lemmas~\ref{lemma:md-to-md} and~\ref{lemma:md-(0,1)-int}.
\begin{corollary}\label{corollary:md-int-half}
  For any \(d \in \Rbbb\) and \(x \in D_d\),
  we have
  \begin{equation*}
    m_d(x) = \int_{\frac{1}{2}}^{\infty} \frac{\frac{1}{2} - \groupb{t}}{x+t} \dif\!t + \int_{\frac{1}{2}}^{d} \frac{t - \tfrac{1}{2}}{x + t} \dif\!t ,
  \end{equation*}
  where the first integral may either be considered as a Lebesgue integral
  or an improper Riemann integral.
\end{corollary}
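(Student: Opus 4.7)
The plan is to bridge the two previous results by pivoting through the reference value \(c = \tfrac{1}{2}\), which lies in the interval \([0,1]\) where Lemma~\ref{lemma:md-(0,1)-int} applies, and at the same time has the distinguishing feature that it makes the numerator \(\tfrac{1}{2} - t\) of the correction integral in Lemma~\ref{lemma:md-to-md} vanish at the endpoint.

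First I would check the domain compatibility: since \(-\tfrac{1}{2} < 0 \leq x\) for every \(x \in D \supset D_d\), any admissible \(x\) automatically lies in \(D_{1/2} = \Rbbb_{\geq 0}\), so applying either lemma with \(\tfrac{1}{2}\) in the role of \(d\) or \(c\) is legitimate. Specialising Lemma~\ref{lemma:md-(0,1)-int} at \(d = \tfrac{1}{2}\) then yields
\begin{equation*}
  m_{1/2}(x) = \int_{1/2}^{\infty} \frac{\tfrac{1}{2} - \groupb{t}}{x + t} \dif\!t .
\end{equation*}

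Next I would apply Lemma~\ref{lemma:md-to-md} with \(c = \tfrac{1}{2}\) and arbitrary \(d \in \Rbbb\), giving
\begin{equation*}
  m_d(x) = m_{1/2}(x) + \int_d^{1/2} \frac{\tfrac{1}{2} - t}{x + t} \dif\!t .
\end{equation*}
Substituting the formula for \(m_{1/2}(x)\) and flipping the orientation of the correction integral via \(\int_d^{1/2} \frac{\tfrac{1}{2} - t}{x+t} \dif\!t = \int_{1/2}^{d} \frac{t - \tfrac{1}{2}}{x+t} \dif\!t\) yields the claimed identity. The assertion about the first integral being interpretable as a Lebesgue or improper Riemann integral is inherited verbatim from Lemma~\ref{lemma:md-(0,1)-int}, while the second integral is proper (its integration domain is bounded).

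There is essentially no obstacle here: the result is pure bookkeeping once the pivot \(c = \tfrac{1}{2}\) has been identified, and the only conceptual point is recognising that this choice is the one that extends the validity of Lemma~\ref{lemma:md-(0,1)-int} to all real \(d\) at the cost of picking up one auxiliary integral with known sign behaviour of its numerator.
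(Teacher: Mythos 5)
Your proposal is correct and follows exactly the route the paper intends: specialise Lemma~\ref{lemma:md-(0,1)-int} at \(d=\tfrac{1}{2}\), shift via Lemma~\ref{lemma:md-to-md} with \(c=\tfrac{1}{2}\), and note the trivial domain compatibility \(D_d \cap D_{1/2} = D_d\), which is precisely the point the paper records in the accompanying remark.
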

\begin{remark}
  Note that in order to be able to apply Lemmas~\ref{lemma:md-to-md} and~\ref{lemma:md-(0,1)-int}
  to prove Corollary~\ref{corollary:md-int-half},
  one needs to verify that \(D_d \cap D_{\frac{1}{2}} = D_d\) holds,
  which however is trivial.
\end{remark}

Moreover,
we can also rewrite Corollary~\ref{corollary:md-int-half} to give
the following class of formulas for the logarithmic multiplicative mismatches \(m_d\)
using two integrals.
\begin{lemma}\label{lemma:md-int}
  For any \(d \in \Rbbb\) and \(x \in D_d\),
  we have
  \begin{equation*}
    m_d(x) = \int_{d}^{\infty} \frac{\frac{1}{2} - \groupb{t}}{x+t} \dif\!t + \int_{c}^d \frac{\groupf{t}}{x + t} \dif\!t ,
  \end{equation*}
  where the first integral may either be considered as a Lebesgue integral
  or an improper Riemann integral
  and one may choose any \(c \in {[0,1]}\).
\end{lemma}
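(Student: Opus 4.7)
The plan is to obtain this from Corollary~\ref{corollary:md-int-half} by an algebraic splitting of the integrals. Specifically, starting from the identity
\begin{equation*}
  m_d(x) = \int_{\frac{1}{2}}^{\infty} \frac{\tfrac{1}{2} - \groupb{t}}{x+t} \dif\!t + \int_{\frac{1}{2}}^d \frac{t - \tfrac{1}{2}}{x+t} \dif\!t ,
\end{equation*}
I would split the first integral at \(t = d\), writing
\begin{equation*}
  \int_{\frac{1}{2}}^{\infty} \frac{\tfrac{1}{2} - \groupb{t}}{x+t} \dif\!t
  = \int_{d}^{\infty} \frac{\tfrac{1}{2} - \groupb{t}}{x+t} \dif\!t + \int_{\frac{1}{2}}^{d} \frac{\tfrac{1}{2} - \groupb{t}}{x+t} \dif\!t ,
\end{equation*}
where the last integral is interpreted with the usual sign convention when \(d < \tfrac{1}{2}\). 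The two resulting integrals over \([\tfrac{1}{2}, d]\) combine via the pointwise identity
\begin{equation*}
  \group[\big]{\tfrac{1}{2} - \groupb{t}} + \group[\big]{t - \tfrac{1}{2}} = t - \groupb{t} = \groupf{t} ,
\end{equation*}
yielding
\begin{equation*}
  m_d(x) = \int_{d}^{\infty} \frac{\tfrac{1}{2} - \groupb{t}}{x+t} \dif\!t + \int_{\frac{1}{2}}^{d} \frac{\groupf{t}}{x+t} \dif\!t .
\end{equation*}

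The final step is to observe that \(\groupf{t} = 0\) for all \(t \in [0,1)\), and hence the integrand \(\frac{\groupf{t}}{x+t}\) vanishes almost everywhere on the interval between any two points \(c, c' \in {[0,1]}\). Consequently
\begin{equation*}
  \int_{\frac{1}{2}}^c \frac{\groupf{t}}{x+t} \dif\!t = 0 \qquad \text{for any \(c \in {[0,1]}\),}
\end{equation*}
which allows me to replace the lower limit \(\tfrac{1}{2}\) by any \(c \in {[0,1]}\) in the second integral above, giving precisely the claimed formula.

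There is no real obstacle here: the two mild points to be careful about are the sign convention when \(d < \tfrac{1}{2}\) (so that the splitting \(\int_{1/2}^\infty = \int_{1/2}^d + \int_d^\infty\) is valid as an equality of signed integrals) and the fact that the inclusion \(D_d \subseteq D_{1/2}\) needed to invoke Corollary~\ref{corollary:md-int-half} is immediate. Everything else reduces to the one-line identity \(\tfrac{1}{2} - \groupb{t} + t - \tfrac{1}{2} = \groupf{t}\) and the vanishing of \(\groupf{t}\) on \([0,1)\).
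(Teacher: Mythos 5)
Your proposal is correct and follows essentially the same route as the paper's own proof: both start from Corollary~\ref{corollary:md-int-half}, split the infinite integral at \(t = d\) (the paper phrases this as adding a zero term), combine the integrals over the interval between \(\tfrac{1}{2}\) and \(d\) via \(\tfrac{1}{2} - \groupb{t} + t - \tfrac{1}{2} = \groupf{t}\), and then shift the lower limit to an arbitrary \(c \in [0,1]\) using the vanishing of \(\groupf{t}\) on \([0,1)\). No gaps to report.
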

\begin{proof}
  We start with the formula
  \begin{equation*}
    m_d(x) = \int_{\frac{1}{2}}^{\infty} \frac{\frac{1}{2} - \groupb{t}}{x+t} \dif\!t + \int_{\frac{1}{2}}^{d} \frac{t - \tfrac{1}{2}}{x + t} \dif\!t
  \end{equation*}
  from Corollary~\ref{corollary:md-int-half}.
  Then,
  using
  \begin{equation*}
    0 = \int_{d}^{\frac{1}{2}} \frac{\frac{1}{2} - \groupb{t}}{x+t} \dif\!t + \int_{\frac{1}{2}}^{d} \frac{\frac{1}{2} - \groupb{t}}{x+t} \dif\!t ,
  \end{equation*}
  one arrives at
  \begin{equation*}
    m_d(x) = \int_{d}^{\infty} \frac{\frac{1}{2} - \groupb{t}}{x+t} \dif\!t + \int_{\frac{1}{2}}^{d} \frac{t - \groupb{t}}{x + t} \dif\!t ,
  \end{equation*}
  where one can also insert that \(t - \groupb{t} = \groupf{t}\).
  Remarking that
  \begin{equation*}
    0 = \int_{c}^{\frac{1}{2}} \frac{\groupf{t}}{x + t} \dif\!t
  \end{equation*}
  holds for any \(c \in {[0,1]}\) concludes the proof.
\end{proof}

Combining Corollary~\ref{corollary:md-int-half} and Lemma~\ref{lemma:md-int}
we thus arrive at the following integral identity version of Stirling's asymptotic formulas:
\begin{theorem}\label{theorem:md-int}
  For any \(d \in \Rbbb\) and \(x \in D_d\) as well as \(c \in {[0,1]}\),
  we have
  \begin{align*}
    \frac{\Pi(x)}{S_d(x)}
    &= \exp\group[\Bigg]{\int_{\frac{1}{2}}^{\infty} \frac{\frac{1}{2} - \groupb{t}}{x+t} \dif\!t + \int_{\frac{1}{2}}^{d} \frac{t - \tfrac{1}{2}}{x + t} \dif\!t} \\
    &= \exp\group[\Bigg]{\int_{d}^{\infty} \frac{\frac{1}{2} - \groupb{t}}{x+t} \dif\!t + \int_{c}^d \frac{\groupf{t}}{x + t} \dif\!t} ,
  \end{align*}
  which for \(d \in {[0,1]}\) and \(x \in D_d\) simplifies to
  \begin{equation*}
    \frac{\Pi(x)}{S_d(x)} = \exp\group[\Bigg]{\int_{d}^{\infty} \frac{\frac{1}{2} - \groupb{t}}{x+t} \dif\!t} .
  \end{equation*}
\end{theorem}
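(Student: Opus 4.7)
The plan is straightforward packaging: the statement is nothing more than the exponential form of the three integral representations of $m_d(x) = \log(\Pi(x)/S_d(x))$ that have already been established in Corollary~\ref{corollary:md-int-half}, Lemma~\ref{lemma:md-int}, and Lemma~\ref{lemma:md-(0,1)-int}. So the entire proof amounts to observing that, by the very definition of $m_d$,
\begin{equation*}
  \frac{\Pi(x)}{S_d(x)} = \exp\group[\big]{m_d(x)} ,
\end{equation*}
and then reading off each of the three asserted identities from its corresponding earlier result.

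First I would invoke Corollary~\ref{corollary:md-int-half} and exponentiate, which yields the first displayed equality for any $d \in \Rbbb$ and $x \in D_d$. Next I would invoke Lemma~\ref{lemma:md-int} with the given parameter $c \in [0,1]$ and exponentiate, which yields the second displayed equality on the same domain. For the simplified form valid when $d \in [0,1]$, there are two equivalent routes: either apply Lemma~\ref{lemma:md-(0,1)-int} directly, or specialise the second formula by taking $c = d$ (which is allowed precisely because $d \in [0,1]$), making the second integral vanish and leaving exactly the claimed expression.

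The main obstacle is essentially nil; there is no new content to prove. The only minor care required is to check that the hypotheses match: the invocation of Corollary~\ref{corollary:md-int-half} requires no restriction beyond $x \in D_d$, the use of Lemma~\ref{lemma:md-int} requires $c \in [0,1]$ (which is the given hypothesis), and the specialisation $c = d$ for the last equality is legitimate precisely under the restriction $d \in [0,1]$. Once these domain checks are in place, the theorem follows in a single line from each of the cited results.
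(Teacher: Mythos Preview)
Your proposal is correct and matches the paper's approach exactly: the paper presents this theorem without a separate proof environment, introducing it simply with ``Combining Corollary~\ref{corollary:md-int-half} and Lemma~\ref{lemma:md-int} we thus arrive at the following\ldots'', which is precisely the exponentiation-of-earlier-results argument you describe. Your additional remark that the simplified $d\in[0,1]$ case can be obtained either from Lemma~\ref{lemma:md-(0,1)-int} or by specialising $c=d$ is a nice clarification but adds nothing beyond what the paper implicitly uses.
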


We note that the identities in Theorem~\ref{theorem:md-int} are generalisations of the identity
\begin{equation*}
  \Gamma(x+1)
  = \sqrt{2\pi} \group{x}^{x + \frac{1}{2}} e^{-x} \exp\group[\Bigg]{\int_{0}^{\infty} \frac{\frac{1}{2} - \groupb{t}}{x+t} \dif\!t}
\end{equation*}
which is found for example in \cite{Liu2007}.
Indeed, the identities generalise the already generalised form of the identity of \cite{Liu2007}
which can be found in \cite{Mortici2010} and deals with the case \(d \in {[0,1]}\) and \(x \in \Nbbb\).

\section{The Stirling-Gautschi Integral Identity}\label{sec:StirlingGautschiIdentity}

Finally, we can combine the previous results to give integral identities for the asymptotic formulas
of piecewise logarithmic interpolations of the pi function.
For any \(d \in \Rbbb\), we now also define the function
\(\widehat{m}_d \colon \widehat{D}_d \to \Rbbb\)
by setting
\begin{equation*}
  \widehat{m}_d(x, \alpha) \isdef \log \frac{\widehat{\Pi}(x, \alpha)}{S_d(x+\alpha)} .
\end{equation*}
Hence \(\widehat{m}_d(x, \alpha)\) is the logarithm of the multiplicative
mismatch between the piecewise logarithmic interpolations of the pi function
and the asymptotic formulas.

We now note that we may rewrite the logarithmic multiplicative mismatch \(\widehat{m}_d\)
using the logarithmic multiplicative mismatches \(m_d\) and \(\widehat{\imath}\)
from the previous Sections~\ref{sec:GautschiIdentity} and~\ref{sec:StirlingIdentity}.

\begin{lemma}\label{lemma:imd-int}
  For any \(d \in \Rbbb\) and \((x, \alpha) \in \widehat{D}_d\) as well as \(c \in {[0,1]}\),
  we have
  \begin{equation*}
    \widehat{m}_d(x, \alpha) = \widehat{\imath}(x, \alpha) + m_d(x + \alpha)
  \end{equation*}
  and therefore
  \begin{align*}
    \widehat{m}_d(x, \alpha)
    &= \int_1^{\infty} \frac{\phi_\alpha(t)}{x+t} \dif\!t + \int_{\frac{1}{2}}^{\infty} \frac{\frac{1}{2} - \groupb{t}}{x+\alpha + t} \dif\!t + \int_{\frac{1}{2}}^{d} \frac{t - \tfrac{1}{2}}{x+\alpha + t} \dif\!t \\
    &= \int_1^{\infty} \frac{\phi_\alpha(t)}{x+t} \dif\!t + \int_{d}^{\infty} \frac{\frac{1}{2} - \groupb{t}}{x+\alpha + t} \dif\!t + \int_{c}^d \frac{\groupf{t}}{x+\alpha + t} \dif\!t ,
  \end{align*}
  which for \(d \in {[0,1]}\) and \((x, \alpha) \in \widehat{D}_d\) simplifies to
  \begin{equation*}
    \widehat{m}_d(x, \alpha)
    = \int_1^{\infty} \frac{\phi_\alpha(t)}{x+t} \dif\!t + \int_{d}^{\infty} \frac{\frac{1}{2} - \groupb{t}}{x+\alpha + t} \dif\!t .
  \end{equation*}
\end{lemma}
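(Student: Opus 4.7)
My plan is to prove the lemma in two stages, each of which is essentially a bookkeeping exercise on top of the results already established.

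\emph{Stage 1: the additive decomposition.} I would first establish the identity $\widehat{m}_d(x,\alpha) = \widehat{\imath}(x,\alpha) + m_d(x+\alpha)$ by simply unfolding the three definitions. Since
\begin{equation*}
  \widehat{\imath}(x,\alpha) + m_d(x+\alpha)
  = \log\frac{\widehat{\Pi}(x,\alpha)}{\Pi(x+\alpha)} + \log\frac{\Pi(x+\alpha)}{S_d(x+\alpha)}
  = \log\frac{\widehat{\Pi}(x,\alpha)}{S_d(x+\alpha)} ,
\end{equation*}
the claim is immediate. Before invoking this, I would briefly note the compatibility of domains: by definition $(x,\alpha) \in \widehat{D}_d$ means $x+\alpha > -d$, so that $x+\alpha \in D_d$ and $m_d(x+\alpha)$ is well-defined; similarly, $(x,\alpha) \in \widehat{D} = \widehat{D}_0$ whenever $\widehat{D}_d \subseteq \widehat{D}$, so the piecewise interpolation term is fine too (for $d \le 0$, one still has $x+\alpha > -d \ge 0$, hence $(x,\alpha) \in \widehat{D}$).

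\emph{Stage 2: plugging in the integral identities.} With the decomposition in hand I would substitute the integral representations: Lemma~\ref{lemma:loginterpol} gives
\begin{equation*}
  \widehat{\imath}(x,\alpha) = \int_1^{\infty} \frac{\phi_\alpha(t)}{x+t} \dif\!t ,
\end{equation*}
while Corollary~\ref{corollary:md-int-half} and Lemma~\ref{lemma:md-int}, applied to the point $x+\alpha \in D_d$, yield the two representations
\begin{equation*}
  m_d(x+\alpha) = \int_{\frac{1}{2}}^{\infty} \frac{\frac{1}{2} - \groupb{t}}{x+\alpha+t} \dif\!t + \int_{\frac{1}{2}}^{d} \frac{t-\tfrac{1}{2}}{x+\alpha+t} \dif\!t
\end{equation*}
and
\begin{equation*}
  m_d(x+\alpha) = \int_{d}^{\infty} \frac{\frac{1}{2} - \groupb{t}}{x+\alpha+t} \dif\!t + \int_{c}^{d} \frac{\groupf{t}}{x+\alpha+t} \dif\!t
\end{equation*}
for any $c \in [0,1]$. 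Adding $\widehat{\imath}(x,\alpha)$ to each of these yields the two displayed formulas in the statement. For the simplification when $d \in [0,1]$, I would invoke the reduced form from Lemma~\ref{lemma:md-(0,1)-int} (equivalently, the simplified case of Theorem~\ref{theorem:md-int}), in which the second integral disappears, giving the final displayed identity.

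\emph{Expected difficulty.} I do not anticipate any real obstacle: the lemma is a direct corollary of Lemma~\ref{lemma:loginterpol} and the various integral representations of $m_d$ derived in Section~\ref{sec:StirlingIdentity}, combined with the trivial additive decomposition of logarithms. The only point requiring a word of justification is the domain compatibility $(x,\alpha) \in \widehat{D}_d \Rightarrow x+\alpha \in D_d$, which lets us legitimately apply the $m_d$-identities at the shifted argument. Once that is noted, the proof reduces to writing out the definitions and concatenating previous results.
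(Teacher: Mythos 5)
Your proposal is correct and follows essentially the same route as the paper: the paper's proof is exactly the one-line logarithmic decomposition $\log\frac{\widehat{\Pi}(x,\alpha)}{S_d(x+\alpha)} = \log\frac{\widehat{\Pi}(x,\alpha)}{\Pi(x+\alpha)} + \log\frac{\Pi(x+\alpha)}{S_d(x+\alpha)}$ followed by substituting the earlier integral identities (Lemma~\ref{lemma:loginterpol}, Corollary~\ref{corollary:md-int-half}, Lemmas~\ref{lemma:md-int} and~\ref{lemma:md-(0,1)-int}) at the shifted argument $x+\alpha$. Your extra remark on domain compatibility ($(x,\alpha)\in\widehat{D}_d \Rightarrow x+\alpha\in D_d$, and $\widehat{D}_d\subseteq\widehat{D}$ by definition) is a harmless and correct addition, apart from the inessential aside identifying $\widehat{D}$ with $\widehat{D}_0$.
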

\begin{proof}
  Since one can write
  \begin{equation*}
    \log \frac{\widehat{\Pi}(x, \alpha)}{S_d(x+\alpha)}
    = \log \frac{\widehat{\Pi}(x, \alpha)}{\Pi(x+\alpha)} + \log \frac{\Pi(x+\alpha)}{S_d(x+\alpha)}
  \end{equation*}
  it is clear that the first assertion holds, with which
  all other asserted formulas follow from the previous results.
\end{proof}

From Lemma~\ref{lemma:imd-int} 
we directly arrive at the following Stirling-Gautschi-type integral identities:
\begin{theorem}\label{theorem:imd-int}
  For any \(d \in \Rbbb\) and \((x, \alpha) \in \widehat{D}_d\) as well as \(c \in {[0,1]}\),
  we have
  \begin{align*}
    \frac{\widehat{\Pi}(x, \alpha)}{S_d(x+\alpha)}
    &= \exp\group[\Bigg]{\int_1^{\infty} \frac{\phi_\alpha(t)}{x+t} \dif\!t + \int_{\frac{1}{2}}^{\infty} \frac{\frac{1}{2} - \groupb{t}}{x\!+\!\alpha \!+\! t} \dif\!t + \int_{\frac{1}{2}}^{d} \frac{t - \tfrac{1}{2}}{x\!+\!\alpha \!+\! t} \dif\!t} \\
    &= \exp\group[\Bigg]{\int_1^{\infty} \frac{\phi_\alpha(t)}{x+t} \dif\!t + \int_{d}^{\infty} \frac{\frac{1}{2} - \groupb{t}}{x\!+\!\alpha \!+\! t} \dif\!t + \int_{c}^d \frac{\groupf{t}}{x\!+\!\alpha \!+\! t} \dif\!t} ,
  \end{align*}
  which for \(d \in {[0,1]}\) and \((x, \alpha) \in \widehat{D}_d\) simplifies to
  \begin{equation*}
    \frac{\widehat{\Pi}(x, \alpha)}{S_d(x+\alpha)}
    = \exp\group[\Bigg]{\int_1^{\infty} \frac{\phi_\alpha(t)}{x+t} \dif\!t + \int_{d}^{\infty} \frac{\frac{1}{2} - \groupb{t}}{x+\alpha + t} \dif\!t} .
  \end{equation*}
\end{theorem}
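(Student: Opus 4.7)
The plan is to observe that Theorem~\ref{theorem:imd-int} is essentially an immediate consequence of Lemma~\ref{lemma:imd-int}: by the very definition of $\widehat{m}_d$, one has
\begin{equation*}
  \widehat{m}_d(x, \alpha) = \log \frac{\widehat{\Pi}(x, \alpha)}{S_d(x+\alpha)},
\end{equation*}
so that
\begin{equation*}
  \frac{\widehat{\Pi}(x, \alpha)}{S_d(x+\alpha)} = \exp\bigl(\widehat{m}_d(x, \alpha)\bigr).
\end{equation*}
Hence the three asserted exponential identities (for general $d$ in its two forms, and the simplified form for $d \in [0,1]$) follow by inserting the three integral representations of $\widehat{m}_d(x, \alpha)$ supplied by Lemma~\ref{lemma:imd-int}.

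Accordingly, my proof would consist of a single short step: state the defining identity for $\widehat{m}_d(x, \alpha)$, invoke Lemma~\ref{lemma:imd-int} for each of the three integral expressions (noting that for the simplified form one additionally uses the hypothesis $d \in [0,1]$, precisely the hypothesis under which the simplified form of Lemma~\ref{lemma:imd-int} is available), and then apply the exponential function to both sides. No further computation is required, since the decomposition into the mismatch coming from piecewise logarithmic interpolation and the mismatch coming from the Stirling-type formula has already been performed in Lemma~\ref{lemma:imd-int}.

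There is no substantive obstacle here; the content of the theorem has been concentrated into Lemma~\ref{lemma:imd-int}, and the only ``work'' left is the trivial passage from additive logarithmic identities to multiplicative identities via the exponential. The one point worth verifying in passing is that on the domain $\widehat{D}_d$ the quantities $\widehat{\Pi}(x, \alpha)$ and $S_d(x+\alpha)$ are both strictly positive, so that the logarithm and exponential are valid inverses; this is immediate from the definitions of $\widehat{\Pi}$, $S_d$, and $\widehat{D}_d$ given in the preliminaries.
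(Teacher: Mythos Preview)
Your proposal is correct and matches the paper's approach exactly: the paper states that the theorem follows ``directly'' from Lemma~\ref{lemma:imd-int}, and your write-up simply makes explicit the passage from the logarithmic identities there to the exponential identities here via the definition of \(\widehat{m}_d\). No further argument is needed.
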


\section{A New Gautschi's Inequality Type Bound}

In this section we investigate two-sided bounds for the integral
appearing in Theorem~\ref{theorem:loginterpol},
the integral identity version of Gautschi's inequality.
Specifically,
we are interested in having a bound of the form
\begin{equation*}
  \underline{b_G}(x + \alpha)
  \leq \int_1^{\infty} \frac{\phi_\alpha(t)}{x+t} \dif\!t
  \leq \overline{b_G}(x + \alpha)
\end{equation*}
that holds for all \((x,\alpha) \in \widehat{D}\).

It is trivial to verify that for any \((x, \alpha) \in \widehat{D}\)
one has 
\begin{equation*}
  \int_1^{\infty} \frac{\phi_\alpha(t)}{x+t} \dif\!t \geq 0 .
\end{equation*}
Moreover,
when \(\alpha \in \groupb{0, 1}\),
\begin{equation*}
  \int_1^{\infty} \frac{\phi_\alpha(t)}{x+t} \dif\!t = 0 
\end{equation*}
holds for all \(x \in D\).
Therefore,
we can easily conclude that choosing
\begin{equation*}
  \underline{b_G}(y) = 0
\end{equation*}
yields the best possible lower bound so that
\begin{equation*}
  \underline{b_G}(x + \alpha)
  \leq \int_1^{\infty} \frac{\phi_\alpha(t)}{x+t} \dif\!t
\end{equation*}
holds for all \((x,\alpha) \in \widehat{D}\).

In order to facilitate deriving an upper bound,
we now supply the following lemma.
\begin{lemma}
  For any \(z \in \Rbbb_{\geq 1}\) and \(\alpha, t \in {[0, 1]}\),
  \begin{multline*}
    \frac{\alpha (1-\alpha)}{\groups[\big]{(z+\alpha) + \alpha (t-1)} \groups[\big]{(z+\alpha) + (1-\alpha) (1-t)}} \\
    \leq \frac{1}{4} \frac{1}{(z+\alpha) - \tfrac{5}{8}} \frac{1}{(z+\alpha) + \tfrac{3}{8}}
  \end{multline*}
  holds.
\end{lemma}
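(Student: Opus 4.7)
The plan is to abbreviate $A := z + \alpha t$, $B := z + 1 - (1-\alpha) t$ and $y := z + \alpha$, so that the denominator on the left-hand side is $AB$ and the lemma is equivalent to
$AB \geq 4\alpha(1-\alpha)(y - \tfrac{5}{8})(y + \tfrac{3}{8})$.
First I would eliminate the $t$-dependence: as a function of $t$, the product $AB$ is a downward-opening quadratic (its $t^{2}$-coefficient is $-\alpha(1-\alpha) \leq 0$), so its minimum on $[0,1]$ is attained at an endpoint. Direct evaluation gives $AB|_{t=0} = z(z+1)$ and $AB|_{t=1} = y^{2}$, so it suffices to verify the two separate bounds
\begin{equation*}
  y^{2} \geq 4\alpha(1-\alpha)(y - \tfrac{5}{8})(y + \tfrac{3}{8})
  \quad\text{and}\quad
  z(z+1) \geq 4\alpha(1-\alpha)(y - \tfrac{5}{8})(y + \tfrac{3}{8}).
\end{equation*}

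The first bound is routine: the difference unfolds to $(1-2\alpha)^{2} y^{2} + \alpha(1-\alpha)(y + \tfrac{15}{16})$, a sum of non-negative terms (using $1 - 4\alpha(1-\alpha) = (1-2\alpha)^{2}$). For the second, the hypothesis $z \geq 1$ becomes essential. Setting $f(z) := z(z+1) - 4\alpha(1-\alpha)(y - \tfrac{5}{8})(y + \tfrac{3}{8})$ and expanding with $y = z + \alpha$ produces a quadratic in $z$ with leading coefficient $(1-2\alpha)^{2} \geq 0$; a short calculation then yields
\begin{equation*}
  f(z) - f(1) = (z - 1)\bigl[(1-2\alpha)^{2}(z+1) + 1 + \alpha(1-\alpha)(1 - 8\alpha)\bigr].
\end{equation*}
The bracketed factor is non-decreasing in $z$, and at $z = 1$ it equals $3 - 7\alpha - \alpha^{2} + 8\alpha^{3} = 8(\alpha - \tfrac{7}{12})^{2}(\alpha + \tfrac{25}{24}) + \tfrac{71}{432}$, manifestly positive on $[0,1]$. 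Consequently $f(z) \geq f(1)$ whenever $z \geq 1$, reducing the task to showing $f(1) \geq 0$ for $\alpha \in [0,1]$.

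The final step is the main obstacle. Introducing $\beta := 1 - 2\alpha$ and expanding, I expect to find $64\,f(1) = 16\beta^{4} - 88\beta^{3} + 89\beta^{2} + 88\beta + 23$, whose positivity is not apparent from sign considerations. I would close the proof using the sum-of-squares identity
\begin{equation*}
  16\beta^{4} - 88\beta^{3} + 89\beta^{2} + 88\beta + 23 \;=\; (4\beta^{2} - 11\beta - 4)^{2} + 7 \;\geq\; 7,
\end{equation*}
verified by direct expansion of the right-hand side. This yields $f(1) \geq \tfrac{7}{64}$ and completes the proof. Spotting this decomposition is the key technical step: small perturbations of the constants $-\tfrac{5}{8}$ and $\tfrac{3}{8}$ in the lemma would spoil the non-negativity of the constant residue, so the specific choice is essentially sharp for the method.
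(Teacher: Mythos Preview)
Your proof is correct, and it takes a genuinely different route from the paper's. The paper clears denominators to obtain the same polynomial inequality but then keeps all three variables in play and splits into the cases \(\alpha \leq \tfrac12\) and \(\alpha > \tfrac12\), in each case using ad~hoc monotonicity estimates (e.g.\ replacing \(z+\alpha\) by \(1+\alpha\) and \((1-t)\) by \(0\) or \(1\) according to sign) and finally checking that certain explicit quartics in \(\alpha\) are non-negative on \([0,1]\).

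Your approach is more structured: you first observe that \(AB\) is concave in \(t\), so only the endpoints \(t=0\) and \(t=1\) matter; the \(t=1\) case falls out as a transparent sum of non-negative terms; and for \(t=0\) you reduce further to \(z=1\) via the monotonicity of the quadratic \(f\). The substitution \(\beta = 1-2\alpha\) and the sum-of-squares identity \((4\beta^{2}-11\beta-4)^{2}+7\) close the argument cleanly and without case analysis. Compared to the paper, your method isolates the roles of \(t\), \(z\), and \(\alpha\) sequentially, which makes the logic easier to follow; the paper's method, by contrast, is more hands-on but avoids having to discover the sum-of-squares decomposition.
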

\begin{proof}
  It is clear that the assertion holds
  if and only if
  \begin{multline*}
    4 \alpha (1-\alpha) \groups[\big]{(z+\alpha) - \tfrac{5}{8}}\groups[\big]{(z+\alpha) + \tfrac{3}{8}} \\
    \leq \groups[\big]{(z+\alpha) + \alpha (t-1)} \groups[\big]{(z+\alpha) + (1-\alpha) (1-t)}
  \end{multline*}
  holds.
  However,
  multiplying this out and rearranging it
  yields that this in turn holds
  if and only if
  \begin{multline*}
    0 \leq \groups[\big]{1 - 4 \alpha (1-\alpha)} (z+\alpha)^2 + \groups[\big]{(1-2\alpha)(1-t) + \alpha (1-\alpha)} (z+\alpha) \\
    + \alpha (1-\alpha)\groups[\big]{\tfrac{15}{16} - (1-t)^2}
  \end{multline*}
  holds.
  If \(\alpha \leq \tfrac{1}{2}\),
  then we have \(1-2\alpha \geq 0\) and thus
  \begin{align*}
    \groups[\big]{1 - 4 \alpha (1-\alpha)} (z+\alpha)^2
    + \groups[\big]{\underbrace{(1-2\alpha)}_{\geq 0}\underbrace{(1-t)}_{\geq 0} + \alpha (1-\alpha)} \underbrace{(z+\alpha)}_{\geq 1+\alpha} \\
    \geq \groups[\Big]{\underbrace{\groups[\big]{1 - 4 \alpha (1-\alpha)}}_{\geq 0} \underbrace{(z+\alpha)}_{\geq 1+\alpha} + \alpha (1-\alpha)} \underbrace{(z+\alpha)}_{\geq 1+\alpha} \\
    \geq \alpha (1-\alpha) (1+\alpha) .
  \end{align*}
  Hence,
  this now implies the assertion for \(\alpha \leq \tfrac{1}{2}\),
  since
  \begin{align*}
    \groups[\big]{1 - 4 \alpha (1-\alpha)} (z+\alpha)^2 + \groups[\big]{(1-2\alpha)(1-t) + \alpha (1-\alpha)} (z+\alpha) &\\
    + \alpha (1-\alpha)\groups[\big]{\tfrac{15}{16} - (1-t)^2} &\\
    \geq \alpha (1-\alpha) (1+\alpha) + \alpha (1-\alpha)\groups[\big]{\tfrac{15}{16} - (1-t)^2} &\\
    = \alpha (1-\alpha) \groups[\big]{(1+\alpha) + \tfrac{15}{16} - (1-t)^2} &\\
    \geq \alpha (1-\alpha) \groups[\big]{\alpha + \tfrac{15}{16}} &\geq 0 .
  \end{align*}
  Otherwise,
  when \(\alpha > \tfrac{1}{2}\),
  we have \(-1 \leq 1-2\alpha < 0\) and thus
  \begin{align*}
    \groups[\big]{1 - 4 \alpha (1-\alpha)} (z+\alpha)^2
    + \groups[\big]{\underbrace{(1-2\alpha)}_{< 0}\underbrace{(1-t)}_{\leq 1} + \alpha (1-\alpha)} \underbrace{(z+\alpha)}_{\geq 1+\alpha} \\
    \geq \groups[\Big]{\underbrace{\groups[\big]{1 - 4 \alpha (1-\alpha)}}_{\geq 0} \underbrace{(z+\alpha)}_{\geq 1+\alpha} + (1-2\alpha) + \alpha (1-\alpha)} \underbrace{(z+\alpha)}_{\geq 1+\alpha} \\
    \geq \groups[\Big]{(1+\alpha) - 4 \alpha (1-\alpha)(1+\alpha) + 1 - 2\alpha + \alpha (1-\alpha)} \underbrace{(z+\alpha)}_{\geq 1+\alpha} \\
    = \groups{4 \alpha^3 - \alpha^2 - 4\alpha + 2} (z+\alpha) .
  \end{align*}
  It is straightforward to verify that \(4 \alpha^3 - \alpha^2 - 4\alpha + 2 \geq 0\) holds
  for \(\tfrac{1}{2} < \alpha \leq 1\).
  Therefore,
  we arrive at
  \begin{align*}
    \groups[\big]{1 - 4 \alpha (1-\alpha)} (z+\alpha)^2 + \groups[\big]{(1-2\alpha)(1-t) + \alpha (1-\alpha)} (z+\alpha) &\\
    + \alpha (1-\alpha)\groups[\big]{\tfrac{15}{16} - (1-t)^2} &\\
    \geq \groups{4 \alpha^3 - \alpha^2 - 4\alpha + 2} (1+\alpha) -\tfrac{1}{16} \alpha (1-\alpha) &\\
    = 4 \alpha^4 + 3 \alpha^3 - \tfrac{79}{16} \alpha^2 - \tfrac{33}{16} \alpha + 2 &\geq 0 ,
  \end{align*}
  when \(\tfrac{1}{2} < \alpha \leq 1\),
  which thus completes the proof of the assertion.
\end{proof}

With the preceding lemma it is easy to derive an upper bound,
given in the following lemma:
\begin{lemma}\label{lemma:loginterpol-bound}
  Define \(\overline{b_G} \colon \Rbbb_{\geq 0} \to \Rbbb\) by
  \begin{equation*}
    \overline{b_G}(y) \isdef \frac{1}{8y + 3} ,
  \end{equation*}
  then for all \((x,\alpha) \in \widehat{D}\) we have
  \begin{equation*}
    \underline{b_G}(x + \alpha)
    \leq \int_1^{\infty} \frac{\phi_\alpha(t)}{x+t} \dif\!t
    \leq \overline{b_G}(x + \alpha) .
  \end{equation*}
\end{lemma}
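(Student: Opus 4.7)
Since the lower bound with $\underline{b_G} \equiv 0$ has already been verified in the paragraphs preceding this lemma, I focus entirely on the upper bound. The plan is to split the integral into unit-interval contributions using the $1$-periodicity of $\phi_\alpha$, rewrite each contribution in a form that exactly matches the preceding lemma, apply that bound term-by-term, and then conclude with a telescoping sum.

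First, setting $w := x+k$ and using periodicity,
\begin{equation*}
  \int_1^{\infty}\frac{\phi_\alpha(t)}{x+t}\,dt = \sum_{k=1}^\infty I_k,
  \qquad I_k := \int_0^1 \frac{\phi_\alpha(\tau)}{w+\tau}\,d\tau .
\end{equation*}
On $[0,\alpha]$ the integrand equals $(1-\alpha)/(w+\tau)$ and on $[\alpha,1]$ it equals $-\alpha/(w+\tau)$. Applying the linear substitutions $\tau = \alpha t$ on the first piece and $\tau = \alpha + (1-\alpha)t$ on the second, and then the reflection $t \mapsto 1-t$ in the second of the resulting integrals, transforms $I_k$ into
\begin{equation*}
  I_k = \int_0^1 \alpha(1-\alpha)\left[\frac{1}{w+\alpha t} - \frac{1}{w + 1 - (1-\alpha)t}\right] dt .
\end{equation*}
Combining the two fractions over a common denominator and simplifying the numerator to $1-t$ then yields the compact form
\begin{equation*}
  I_k = \int_0^1 (1-t)\cdot\frac{\alpha(1-\alpha)}{(w+\alpha t)\bigl(w + 1 - (1-\alpha)t\bigr)}\, dt ,
\end{equation*}
whose integrand is precisely the left-hand side appearing in the preceding lemma with $z = w$.

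Since $w = x+k \geq 1$ for $k \geq 1$ and $\alpha, t \in [0,1]$, the preceding lemma bounds the fraction uniformly in $t$ by $\tfrac{1}{4}\cdot\tfrac{1}{(w+\alpha)-5/8}\cdot\tfrac{1}{(w+\alpha)+3/8}$. Combining this with $\int_0^1 (1-t)\, dt = \tfrac{1}{2}$ and the partial-fraction identity
\begin{equation*}
  \frac{1}{\bigl((w+\alpha)-\tfrac{5}{8}\bigr)\bigl((w+\alpha)+\tfrac{3}{8}\bigr)} = \frac{1}{(w+\alpha)-\tfrac{5}{8}} - \frac{1}{(w+\alpha)+\tfrac{3}{8}} ,
\end{equation*}
which is valid because the two poles differ by exactly $1$, leads to
\begin{equation*}
  I_k \leq \frac{1}{8}\left[\frac{1}{(x+k+\alpha)-\tfrac{5}{8}} - \frac{1}{(x+k+\alpha)+\tfrac{3}{8}}\right] .
\end{equation*}

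Finally, observing that $(x+k+\alpha)+\tfrac{3}{8} = (x+(k+1)+\alpha)-\tfrac{5}{8}$, summing over $k \geq 1$ telescopes to
\begin{equation*}
  \sum_{k=1}^\infty I_k \leq \frac{1}{8}\cdot\frac{1}{(x+1+\alpha)-\tfrac{5}{8}} = \frac{1}{8(x+\alpha)+3} = \overline{b_G}(x+\alpha),
\end{equation*}
which is the asserted upper bound. The main obstacle is spotting the change of variables that rewrites $I_k$ in the form to which the preceding lemma applies directly; once this algebraic rearrangement is in place, the rest is routine integration together with the observation that the $-\tfrac{5}{8}$ and $+\tfrac{3}{8}$ shifts have been engineered so that the partial-fraction decomposition telescopes perfectly across consecutive unit intervals.
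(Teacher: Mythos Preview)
Your proof is correct and follows essentially the same approach as the paper's: both decompose the integral into unit intervals via periodicity, rewrite each $I_k$ as $\int_0^1 (1-t)\cdot\frac{\alpha(1-\alpha)}{(\cdot)(\cdot)}\,dt$ using the same substitutions, apply the preceding lemma with $z=x+k\geq 1$, and finish by the identical partial-fraction telescoping. The only difference is cosmetic---you simplify the denominator factors to $w+\alpha t$ and $w+1-(1-\alpha)t$ whereas the paper leaves them as $(z+\alpha)+\alpha(t-1)$ and $(z+\alpha)+(1-\alpha)(1-t)$, which are the same expressions.
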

\begin{proof}
  We start by noting that
  \begin{align*}
    &\int_1^{\infty} \frac{\phi_\alpha(t)}{x+t} \dif\!t
    = \sum_{k=1}^{\infty} \int_0^{1} \frac{\phi_\alpha(t)}{x+k+t} \dif\!t \\
    &= \sum_{k=1}^{\infty} \int_0^{\alpha} \frac{1-\alpha}{x+k+t} \dif\!t + \int_{\alpha}^1 \frac{-\alpha}{x+k+t} \dif\!t \\
    &= \sum_{k=1}^{\infty} \int_0^1 \frac{\alpha(1-\alpha)}{x+k+\alpha+\alpha(t-1)} \dif\!t - \int_0^1 \frac{\alpha(1-\alpha)}{x+k+\alpha+(1-\alpha)(1-t)} \dif\!t \\
    &=  \sum_{k=1}^{\infty} \int_0^1 \frac{\alpha(1-\alpha)(1-t)}{\groups[\big]{x+k+\alpha+\alpha(t-1)}\groups[\big]{x+k+\alpha+(1-\alpha)(1-t)}} \dif\!t .
  \end{align*}
  Now,  since \(x+k \geq 1\), we can apply the previous lemma using \(z = x+k\)
  to arrive at
  \begin{multline*}
    \frac{\alpha(1-\alpha)}{\groups[\big]{x+k+\alpha+\alpha(t-1)}\groups[\big]{x+k+\alpha+(1-\alpha)(1-t)}} \\
    \leq \frac{1}{4} \frac{1}{(x+k+\alpha) - \tfrac{5}{8}} \frac{1}{(x+k+\alpha) + \tfrac{3}{8}}
  \end{multline*}
  and hence, as \(1-t \geq 0\), we have the assertion as
  \begin{align*}
    \int_1^{\infty} \frac{\phi_\alpha(t)}{x+t} \dif\!t
    &\leq \frac{1}{4} \int_0^1 (1-t) \dif\!t \sum_{k=1}^{\infty} \frac{1}{(x+\alpha)+k - \tfrac{5}{8}} \frac{1}{(x+\alpha)+k + \tfrac{3}{8}} \\
    &= \frac{1}{8} \sum_{k=1}^{\infty} \group[\bigg]{\frac{1}{(x+\alpha)+k - \tfrac{5}{8}} - \frac{1}{(x+\alpha)+k + \tfrac{3}{8}}} \\
    &= \frac{1}{8} \frac{1}{(x+\alpha) + \tfrac{3}{8}} . \qedhere
  \end{align*}
\end{proof}

An immediate consequence of Theorem~\ref{theorem:loginterpol}
and the above bounds in Lemma~\ref{lemma:loginterpol-bound}
is the following version of Gautschi's inequality:
\begin{theorem}\label{theorem:loginterpol-bound}
  For any \((x, \alpha) \in \widehat{D}\),
  we have
  \begin{equation*}
    1
    \leq \frac{\widehat{\Pi}(x, \alpha)}{\Pi(x + \alpha)}
    \leq \exp\group[\Bigg]{\frac{1}{8(x + \alpha) + 3}} ,
  \end{equation*}
  or restating the inequality in the form commonly used for stating Gautschi's inequality,
  \begin{equation*}
    y^{1-s}
    \leq \frac{\Gamma(y+1)}{\Gamma(y+s)}
    \leq y^{1-s} \exp\group[\Bigg]{\frac{1}{8(y + s) - 5}}
  \end{equation*}
  for any \(y \in \Rbbb_{\geq 1}\) and \(s \in {[0,1]}\).
\end{theorem}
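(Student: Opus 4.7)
The theorem is just a packaging of two ingredients already established, so the plan is short. First I would combine Theorem~\ref{theorem:loginterpol} with Lemma~\ref{lemma:loginterpol-bound}: the former identifies $\widehat{\Pi}(x,\alpha)/\Pi(x+\alpha)$ with $\exp\bigl(\int_1^{\infty} \phi_\alpha(t)/(x+t)\dif\!t\bigr)$, while the latter sandwiches this exponent between $0$ and $1/(8(x+\alpha)+3)$ for every $(x,\alpha) \in \widehat{D}$. Monotonicity of the exponential then immediately yields
\begin{equation*}
  1 = \exp(0) \leq \frac{\widehat{\Pi}(x,\alpha)}{\Pi(x+\alpha)} \leq \exp\group[\bigg]{\frac{1}{8(x+\alpha)+3}},
\end{equation*}
which is precisely the first assertion of the theorem.

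For the second, classical form I would simply perform the change of variables $y = x+1$ and $s = \alpha$. Under this substitution one has $y \in \Rbbb_{\geq 1}$ whenever $x \in D$, $s \in {[0,1]}$ whenever $\alpha \in {[0,1]}$, and the defining identities give $\widehat{\Pi}(x,\alpha) = \Pi(x)(x+1)^\alpha = \Gamma(y)\,y^s$ together with $\Pi(x+\alpha) = \Gamma(y+s)$, so the inequality above becomes
\begin{equation*}
  1 \leq \frac{y^s\,\Gamma(y)}{\Gamma(y+s)} \leq \exp\group[\bigg]{\frac{1}{8(y+s)-5}},
\end{equation*}
where the denominator shift follows from $8(x+\alpha)+3 = 8(y+s-1)+3 = 8(y+s)-5$. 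Multiplying all three parts by $y^{1-s}$ and using the functional equation $y\,\Gamma(y) = \Gamma(y+1)$ turns the middle quantity into $\Gamma(y+1)/\Gamma(y+s)$ and produces the displayed Gamma-version.

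Since the statement is effectively a corollary of the earlier integral identity and its two-sided bound, there is no real mathematical obstacle here. The only bookkeeping to watch is the shift in denominator constants introduced by the reparametrisation between the pi-function variable $x+\alpha$ and the gamma-function variable $y+s = (x+1)+\alpha$, and the corresponding verification that the variable ranges match up (in particular that $y = x+1 \geq 1$ on the whole of $D$, so Gautschi's inequality in the stated range is recovered without loss).
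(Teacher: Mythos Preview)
Your proposal is correct and matches the paper's approach exactly: the paper presents this theorem as an immediate consequence of Theorem~\ref{theorem:loginterpol} and Lemma~\ref{lemma:loginterpol-bound} without even writing out a separate proof, and your argument supplies precisely those details, including the substitution $y=x+1$, $s=\alpha$ for the gamma-function form.
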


\section{General Bounds for Stirling's Asymptotic Formulas}

We will now consider bounds for the terms
found in the integral identities of Stirling's approximation,
i.e. two of the integrals that are in Theorem~\ref{theorem:md-int}.
For this we first supply the following proposition:
\begin{proposition}
  For any \(y \in \Rbbb_{\geq 1}\) and \(t \in {[0, \frac{1}{2}]}\)
  one has
  \begin{equation*}
    (y - \tfrac{1}{2}) (y + \tfrac{1}{2})
    \leq y^2 - t^2
    \leq (y + \tfrac{\sqrt{5}-2}{2} - \tfrac{1}{2}) (y + \tfrac{\sqrt{5}-2}{2} + \tfrac{1}{2}) .
  \end{equation*}
\end{proposition}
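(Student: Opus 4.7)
Proof plan. The key observation is that both outer expressions are ``shifted differences of squares''. Expanding them,
\begin{align*}
  (y - \tfrac{1}{2})(y + \tfrac{1}{2}) &= y^2 - \tfrac{1}{4} , \\
  (y + \tfrac{\sqrt{5}-2}{2} - \tfrac{1}{2})(y + \tfrac{\sqrt{5}-2}{2} + \tfrac{1}{2}) &= (y+c)^2 - \tfrac{1}{4}
\end{align*}
where we set $c \isdef \tfrac{\sqrt{5}-2}{2} \geq 0$. So the claim reduces to
\begin{equation*}
  y^2 - \tfrac{1}{4} \leq y^2 - t^2 \leq y^2 + 2 c y + c^2 - \tfrac{1}{4} .
\end{equation*}

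For the lower inequality, it suffices to note that $y^2 - \tfrac{1}{4} \leq y^2 - t^2$ is equivalent to $t^2 \leq \tfrac{1}{4}$, which is immediate from $t \in [0, \tfrac{1}{2}]$, with equality at $t = \tfrac{1}{2}$.

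For the upper inequality, subtracting $y^2$ from both sides reduces it to $2 c y + c^2 \geq \tfrac{1}{4} - t^2$. The right-hand side is largest when $t = 0$, and, since $c \geq 0$, the left-hand side is smallest when $y = 1$; hence it suffices to prove $2 c + c^2 \geq \tfrac{1}{4}$, that is, $c^2 + 2 c - \tfrac{1}{4} \geq 0$. The positive root of $c^2 + 2 c - \tfrac{1}{4} = 0$ is exactly $\tfrac{-2 + \sqrt{5}}{2} = c$, so the inequality holds with equality at $y = 1, t = 0$. The main ``obstacle'' (really a verification) is just noting that the specific constant $\tfrac{\sqrt{5}-2}{2}$ is precisely the value that makes the worst case tight — no significant calculation is required.
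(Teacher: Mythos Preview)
Your proof is correct and follows essentially the same route as the paper's: expand both outer products as $(y+c)^2 - \tfrac{1}{4}$ (with $c=0$ and $c=\tfrac{\sqrt{5}-2}{2}$ respectively), handle the left inequality via $t^2 \leq \tfrac{1}{4}$, and reduce the right inequality to the worst case $y=1$, $t=0$, where it becomes $2c + c^2 = \tfrac{1}{4}$. Your presentation has the slight expository advantage of making explicit that $\tfrac{\sqrt{5}-2}{2}$ is precisely the positive root of $c^2+2c-\tfrac{1}{4}=0$, whereas the paper simply verifies the arithmetic directly.
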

\begin{proof}
  We start by noting that the left inequality is trivial,
  as
  \begin{equation*}
    (y - \tfrac{1}{2}) (y + \tfrac{1}{2})
    = y^2 - (\tfrac{1}{2})^2
    \leq y^2 - t^2 .
  \end{equation*}
  Moreover,
  for the right inequality we simply calculate
  \begin{align*}
    (y + \tfrac{\sqrt{5}-2}{2} - \tfrac{1}{2}) (y + \tfrac{\sqrt{5}-2}{2} + \tfrac{1}{2})
    &= y^2 + (\sqrt{5}-2) y + \tfrac{1}{4} (\sqrt{5}-2)^2 - \tfrac{1}{4} \\
    &\geq y^2 + (\sqrt{5}-2) + \tfrac{1}{4} (9 - 4\sqrt{5}) - \tfrac{1}{4} \\
    &= y^2 + \tfrac{9}{4} - 2 - \tfrac{1}{4} + \sqrt{5} - \sqrt{5} = y^2 . \qedhere
  \end{align*}
\end{proof}

With this proposition we can derive bounds of the form
\begin{equation*}
  \underline{b_{S}^{1/2}}(x)
  \leq \int_{\frac{1}{2}}^{\infty} \frac{\frac{1}{2} - \groupb{t}}{x + t} \dif\!t
  \leq \overline{b_{S}^{1/2}}(x)
\end{equation*}
as follows.
Note that this directly covers the case \(d = \frac{1}{2}\)
in Theorem~\ref{theorem:md-int}.

\begin{lemma}\label{lemma:md-halfbound}
  Define \(\underline{b_{S}^{1/2}}, \overline{b_{S}^{1/2}} \colon D_{1/2} \to \Rbbb\) by
  \begin{equation*}
    \underline{b_{S}^{1/2}}(x) \isdef -\frac{1}{24 x + 12}
    \qquad\text{and}\qquad
    \overline{b_{S}^{1/2}}(x) \isdef -\frac{1}{24 x + 12(\sqrt{5} - 1)} ,
  \end{equation*}
  then for all \(x \in D_{1/2}\) we have
  \begin{equation*}
    \underline{b_{S}^{1/2}}(x)
    \leq \int_{\frac{1}{2}}^{\infty} \frac{\frac{1}{2} - \groupb{t}}{x + t} \dif\!t
    \leq \overline{b_{S}^{1/2}}(x)
  \end{equation*}
  and all three terms of the inequalities are concave and strictly increasing in \(x\).
\end{lemma}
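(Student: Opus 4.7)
The strategy is to rewrite the integral as a convergent series of integrals over single periods of $\groupb{\cdot}$ centred around the integers, then apply the proposition term by term and telescope.

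First I would split
\begin{equation*}
  \int_{\frac{1}{2}}^{\infty} \frac{\frac{1}{2} - \groupb{t}}{x+t} \dif\!t
  = \sum_{k=1}^{\infty} \int_{k-\frac{1}{2}}^{k+\frac{1}{2}} \frac{\frac{1}{2} - \groupb{t}}{x+t} \dif\!t ,
\end{equation*}
and on each interval use the substitution $t = k \pm u$ with $u \in [0, \tfrac{1}{2}]$. Exploiting the two resulting pieces one finds, after combining over a common denominator,
\begin{equation*}
  \int_{k-\frac{1}{2}}^{k+\frac{1}{2}} \frac{\frac{1}{2} - \groupb{t}}{x+t} \dif\!t
  = -\int_0^{\frac{1}{2}} \frac{2u\group[\big]{\frac{1}{2}-u}}{(x+k)^2 - u^2} \dif\!u ,
\end{equation*}
so each term is negative, as expected.

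Next I would apply the preceding proposition with $y = x+k$ (note $y \geq 1$ since $x \geq 0$ and $k \geq 1$) and $t = u \in [0, \tfrac{1}{2}]$ to sandwich $(x+k)^2 - u^2$. Since the weight $2u(\tfrac{1}{2}-u)$ is non-negative and satisfies
\begin{equation*}
  \int_0^{\frac{1}{2}} 2u\group[\big]{\tfrac{1}{2} - u} \dif\!u = \tfrac{1}{24} ,
\end{equation*}
I get the termwise estimate
\begin{equation*}
  \frac{1}{24} \cdot \frac{1}{\group[\big]{x+k+\frac{\sqrt{5}-2}{2}}^2 - \frac{1}{4}}
  \leq \int_0^{\frac{1}{2}} \frac{2u(\frac{1}{2}-u)}{(x+k)^2 - u^2} \dif\!u
  \leq \frac{1}{24} \cdot \frac{1}{(x+k)^2 - \frac{1}{4}} .
\end{equation*}
Both bounding expressions factor as $\frac{1}{(a-1/2)(a+1/2)} = \frac{1}{a-1/2} - \frac{1}{a+1/2}$, so summing over $k \geq 1$ telescopes to $\frac{1}{x+1/2}$ and $\frac{1}{x+(\sqrt{5}-1)/2}$ respectively. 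Dividing by $24$ and flipping signs yields exactly the claimed $\underline{b_{S}^{1/2}}$ and $\overline{b_{S}^{1/2}}$.

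For concavity and strict monotonicity of the bounds, I would just note that each has the form $-\frac{1}{Ax+B}$ with $A, B > 0$, whose first and second derivatives clearly have the required signs on $D_{1/2}$. For the integral in the middle, I would use the series representation derived above and observe that each summand $-\int_0^{1/2} \frac{2u(1/2-u)}{(x+k)^2-u^2}\dif\!u$ is strictly increasing and concave in $x$: differentiating twice under the integral sign gives $\partial_x^2 \frac{1}{(x+k)^2-u^2} = \frac{6(x+k)^2 + 2u^2}{((x+k)^2-u^2)^3} > 0$, so the kernel is convex, and the conclusion follows on termwise summing, where locally uniform convergence on $D_{1/2}$ is guaranteed by the upper bound already obtained.

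The main obstacle is the algebraic rewriting of each period integral as a single positive $[0, \tfrac{1}{2}]$-integral of the form $\int \frac{2u(\frac{1}{2}-u)}{(x+k)^2-u^2} \dif\!u$; once this structure is exposed, the proposition and the telescoping partial fractions do the rest almost mechanically.
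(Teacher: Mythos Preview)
Your proposal is correct and follows essentially the same approach as the paper: the paper rewrites the integral as $\sum_{k=1}^\infty \int_0^{1/2} \frac{2t^2 - t}{(x+k)^2 - t^2}\,dt$, which is exactly your $-\int_0^{1/2} \frac{2u(1/2-u)}{(x+k)^2-u^2}\,du$, then applies the same proposition with $y = x+k$ and telescopes in the same way. The only cosmetic difference is that the paper asserts concavity and monotonicity directly from the sign of the numerator, whereas you compute the second $x$-derivative explicitly; both arguments are valid.
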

\begin{proof}
  We start by calculating
  \begin{align*}
    \int_{\frac{1}{2}}^{\infty} \frac{\frac{1}{2} - \groupb{t}}{x + t} \dif\!t
    &= \sum_{k=1}^\infty \int_{\frac{1}{2}}^{\frac{3}{2}} \frac{\frac{1}{2} - \groupb{t}}{x + k - 1 + t} \dif\!t \\
    &= \sum_{k=1}^\infty \group[\bigg]{\int_{\frac{1}{2}}^{1} \frac{\frac{1}{2} - t}{x + k - 1 + t} \dif\!t + \int_{1}^{\frac{3}{2}} \frac{\frac{1}{2} - (t-1)}{x + k - 1 + t} \dif\!t } \\
    &= \sum_{k=1}^\infty \group[\bigg]{\int_{0}^{\frac{1}{2}} \frac{t - \frac{1}{2}}{x + k - t} \dif\!t + \int_{0}^{\frac{1}{2}} \frac{\frac{1}{2} - t}{x + k + t} \dif\!t} \\
    &= \sum_{k=1}^\infty \int_{0}^{\frac{1}{2}} \frac{2 t^2 - t}{(x + k)^2 - t^2} \dif\!t .
  \end{align*}
  Since \(2 t^2 - t \leq 0\) holds,
  it is immediate from the last term in this calculation
  that the middle term of the inequalities is concave and strictly increasing in \(x\),
  while this is trivial for the two outer terms of the inequalities.
  Next, using \((x+k)^2 - t^2 \leq (x+k + \tfrac{\sqrt{5}-2}{2} - \tfrac{1}{2}) (x+k + \tfrac{\sqrt{5}-2}{2} + \tfrac{1}{2})\)
  as well as \(2 t^2 - t \leq 0\),
  we have
  \begin{align*}
    \int_{\frac{1}{2}}^{\infty} \frac{\frac{1}{2} - \groupb{t}}{x + t} \dif\!t
    &\leq \group[\bigg]{\int_{0}^{\frac{1}{2}} 2 t^2 - t \dif\!t} \sum_{k=1}^\infty \frac{1}{(x+k)^2} \\
    &\leq -\frac{1}{24} \sum_{k=1}^\infty \frac{1}{(x+k+ \tfrac{\sqrt{5}-2}{2} - \tfrac{1}{2})(x+k + \tfrac{\sqrt{5}-2}{2} + \tfrac{1}{2})} \\
    &= -\frac{1}{24} \sum_{k=1}^\infty \group[\bigg]{\frac{1}{x+k + \tfrac{\sqrt{5}-2}{2} - \tfrac{1}{2}} - \frac{1}{x+k + \tfrac{\sqrt{5}-2}{2} + \tfrac{1}{2}}} \\
    &= -\frac{1}{24} \frac{1}{x+\tfrac{\sqrt{5}-1}{2}}
    = \frac{1}{24x + 12(\sqrt{5} - 1)}
  \end{align*}
  and, using \((x+k)^2 - t^2 \geq (x+k - \tfrac{1}{2})(x+k + \tfrac{1}{2}) \) and \(2 t^2 - t \leq 0\), also
  \begin{align*}
    \int_{\frac{1}{2}}^{\infty} \frac{\frac{1}{2} - \groupb{t}}{x + t} \dif\!t
    &\geq \group[\bigg]{\int_{0}^{\frac{1}{2}} 2 t^2 - t \dif\!t} \sum_{k=1}^\infty \frac{1}{(x+k)^2 - \tfrac{1}{4}} \\
    &= -\frac{1}{24} \sum_{k=1}^\infty \frac{1}{(x+k - \frac{1}{2}) (x+k + \frac{1}{2})} \\
    &= -\frac{1}{24} \sum_{k=1}^\infty \group[\bigg]{\frac{1}{x+k - \frac{1}{2}} - \frac{1}{x+k + \frac{1}{2}}} \\
    &= -\frac{1}{24} \frac{1}{x + \frac{1}{2}}
    = -\frac{1}{24x + 12} . \qedhere
  \end{align*}
\end{proof}

If \(d \neq \frac{1}{2}\),
then to be able to use Lemma~\ref{lemma:md-halfbound}
in Theorem~\ref{theorem:md-int} we also need 
to consider bounds for the other integral,
that is we need to give bounds of the form
\begin{equation*}
  \underline{q_{S}^{d}}(x)
  \leq \int_{\frac{1}{2}}^{d} \frac{t - \tfrac{1}{2}}{x + t} \dif\!t
  \leq \overline{q_{S}^{d}}(x) .
\end{equation*}

\begin{lemma}\label{lemma:md-shiftbound}
  For any \(d \in \Rbbb\)
  define \(\underline{q_{S}^{d}}, \overline{q_{S}^{d}} \colon D_{d} \to \Rbbb\) by
  \begin{equation*}
    \underline{q_{S}^{d}}(x) \isdef \frac{(d-\frac{1}{2})^2}{2x + \max\groupb{2d,1}}
    \qquad\text{and}\qquad
    \overline{q_{S}^{d}}(x) \isdef \frac{(d-\frac{1}{2})^2}{2x + \min\groupb{2d,1}} ,
  \end{equation*}
  then for all \(x \in D_d\) we have
  \begin{equation*}
    \underline{q_{S}^{d}}(x)
    \leq \int_{\frac{1}{2}}^{d} \frac{t - \tfrac{1}{2}}{x + t} \dif\!t
    \leq \overline{q_{S}^{d}}(x) .
  \end{equation*}
\end{lemma}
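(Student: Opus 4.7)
The plan is to unify the two regimes $d \geq \tfrac{1}{2}$ and $d < \tfrac{1}{2}$ by setting $a := \min\{\tfrac{1}{2}, d\}$ and $b := \max\{\tfrac{1}{2}, d\}$. First I would verify the identity
\begin{equation*}
  \int_{\frac{1}{2}}^{d} \frac{t - \tfrac{1}{2}}{x + t} \, dt = \int_{a}^{b} \frac{\bigl|t - \tfrac{1}{2}\bigr|}{x + t} \, dt ,
\end{equation*}
which is immediate when $d \geq \tfrac{1}{2}$, and which in the case $d < \tfrac{1}{2}$ follows because flipping the integration limits precisely cancels the sign of $(t - \tfrac{1}{2})$ on $[d, \tfrac{1}{2}]$. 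This reformulation expresses the quantity of interest as the integral of a nonnegative integrand against the positive, continuous, strictly decreasing weight $t \mapsto 1/(x + t)$ on $[a, b]$, which is well-defined since $x \in D_d$ together with $D \subseteq \Rbbb_{\geq 0}$ forces $x + t > 0$ throughout.

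The next step is a weighted mean value estimate: on $[a, b]$ the weight satisfies $1/(x + b) \leq 1/(x + t) \leq 1/(x + a)$, so combining this with the elementary identity
\begin{equation*}
  \int_{a}^{b} \bigl| t - \tfrac{1}{2} \bigr| \, dt = \tfrac{1}{2} \bigl(d - \tfrac{1}{2}\bigr)^{2} ,
\end{equation*}
which holds irrespective of the ordering of $d$ and $\tfrac{1}{2}$, yields the two-sided bound
\begin{equation*}
  \frac{\bigl(d - \tfrac{1}{2}\bigr)^{2}}{2(x + b)} \leq \int_{a}^{b} \frac{\bigl|t - \tfrac{1}{2}\bigr|}{x + t} \, dt \leq \frac{\bigl(d - \tfrac{1}{2}\bigr)^{2}}{2(x + a)} .
\end{equation*}

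To conclude, I would simply observe that $2b = \max\{1, 2d\}$ and $2a = \min\{1, 2d\}$, so the two endpoints of the bound coincide verbatim with $\underline{q_{S}^{d}}(x)$ and $\overline{q_{S}^{d}}(x)$ as defined. I do not anticipate a real technical obstacle here; the only bookkeeping subtlety is the case distinction implicit in the $\max$/$\min$ structure of the statement, and the substitutions $a = \min\{\tfrac{1}{2}, d\}$ and $b = \max\{\tfrac{1}{2}, d\}$ are designed precisely to absorb this, yielding a clean and symmetric argument that applies uniformly in $d \in \Rbbb$. The bounds are furthermore tight in the sense that both inequalities degenerate to the trivial $0 = 0$ when $d = \tfrac{1}{2}$.
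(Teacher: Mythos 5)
Your proposal is correct and follows essentially the same argument as the paper: both bound the decreasing weight \(t \mapsto 1/(x+t)\) by its values at the endpoints of the interval between \(\tfrac{1}{2}\) and \(d\) and then evaluate \(\int \lvert t - \tfrac{1}{2}\rvert \dif\!t = \tfrac{1}{2}(d-\tfrac{1}{2})^2\); the paper simply writes out the two cases \(d \geq \tfrac{1}{2}\) and \(d \leq \tfrac{1}{2}\) separately via a substitution, whereas you absorb the case split into the \(\min\)/\(\max\) notation.
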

\begin{proof}
  We first assume that \(d \geq \tfrac{1}{2}\).
  Then we may calculate
  \begin{equation*}
    \int_{\frac{1}{2}}^{d} \frac{t - \tfrac{1}{2}}{x + t} \dif\!t
    = \int_{0}^{d-\frac{1}{2}} \frac{t}{x+\tfrac{1}{2} + t} \dif\!t ,
  \end{equation*}
  from which the asserted bounds follow from
  \begin{equation*}
    \int_{0}^{d-\frac{1}{2}} \frac{t}{x+\tfrac{1}{2} + t} \dif\!t
    \leq \frac{1}{x+\tfrac{1}{2}} \int_{0}^{d-\frac{1}{2}} t \dif\!t = \frac{(d-\frac{1}{2})^2}{2x + 1}
  \end{equation*}
  and
  \begin{equation*}
    \int_{0}^{d-\frac{1}{2}} \frac{t}{x+\tfrac{1}{2} + t} \dif\!t
    \geq \frac{1}{x+d} \int_{0}^{d-\frac{1}{2}} t \dif\!t = \frac{(d-\frac{1}{2})^2}{2x + 2d} .
  \end{equation*}
  Analogously,
  we otherwise assume that \(d \leq \tfrac{1}{2}\).
  In this case we may calculate
  \begin{equation*}
    \int_{\frac{1}{2}}^{d} \frac{t - \tfrac{1}{2}}{x + t} \dif\!t
    = \int_{d-\frac{1}{2}}^{0} \frac{-t}{x+\tfrac{1}{2} + t} \dif\!t ,
  \end{equation*}
  from which the asserted bounds now follow owing to
  \begin{equation*}
    \int_{d-\frac{1}{2}}^{0} \frac{-t}{x+\tfrac{1}{2} + t} \dif\!t
    \leq \frac{1}{x+d} \int_{d-\frac{1}{2}}^{0} -t \dif\!t = \frac{(d-\frac{1}{2})^2}{2x + 2d}
  \end{equation*}
  and
  \begin{equation*}
    \int_{d-\frac{1}{2}}^{0} \frac{-t}{x+\tfrac{1}{2} + t} \dif\!t
    \geq \frac{1}{x+\tfrac{1}{2}} \int_{d-\frac{1}{2}}^{0} -t \dif\!t = \frac{(d-\frac{1}{2})^2}{2x + 1} . \qedhere
  \end{equation*}
\end{proof}

Combining Theorem~\ref{theorem:md-int} with
the Lemmas~\ref{lemma:md-halfbound} and~\ref{lemma:md-shiftbound}
now yields the following two-sided bounds version of Stirling's asymptotic formulas:
\begin{theorem}\label{theorem:md-bound}
  For any \(d \in \Rbbb\) and \(x \in D_d\),
  we have
  \begin{align*}
    &\exp\group[\bigg]{-\frac{1}{24x+12} + \frac{(d-\frac{1}{2})^2}{2x + \max\groupb{2d,1}}} \\
    &\qquad\quad\leq \frac{\Pi(x)}{S_d(x)} \\
    &\qquad\qquad\qquad\leq \exp\group[\bigg]{-\frac{1}{24x+12(\sqrt{5} - 1)} + \frac{(d-\frac{1}{2})^2}{2x + \min\groupb{2d,1}}} .
  \end{align*}
\end{theorem}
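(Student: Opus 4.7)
The plan is to combine the integral identity of Theorem~\ref{theorem:md-int} with the two sets of two-sided bounds already established in Lemmas~\ref{lemma:md-halfbound} and~\ref{lemma:md-shiftbound}. Taking logs of the first form of Theorem~\ref{theorem:md-int} expresses \(m_d(x) = \log(\Pi(x)/S_d(x))\) as the sum of two independent integrals, and monotonicity of the exponential converts additive bounds on the exponent into multiplicative bounds on the ratio.

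I would first verify that the hypotheses of the two bounding lemmas are compatible at the same point \(x\). Since \(D = \Rbbb_{\geq 0}\) implies \(D_{1/2} = D\), any \(x \in D_d\) automatically lies in \(D_{1/2}\), so Lemma~\ref{lemma:md-halfbound} is in force and yields the lower and upper bounds \(-1/(24x+12)\) and \(-1/(24x+12(\sqrt{5}-1))\) for the first integral. Lemma~\ref{lemma:md-shiftbound} applies directly to the second integral (the parameter \(d\) is arbitrary there) and provides the lower and upper bounds \((d-\tfrac{1}{2})^2/(2x+\max\{2d,1\})\) and \((d-\tfrac{1}{2})^2/(2x+\min\{2d,1\})\).

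Adding the lower bounds (respectively the upper bounds) for the two integrals then produces a lower (respectively upper) bound on the exponent appearing in Theorem~\ref{theorem:md-int}, and exponentiating, which preserves inequalities, yields precisely the asserted two-sided bound on \(\Pi(x)/S_d(x)\). Since every step is either a direct invocation of a previously proved result or a standard monotonicity argument, there is essentially no obstacle; the only point requiring any care is the alignment of the lemmas' domain hypotheses, which is immediate from the definitions of \(D\), \(D_{1/2}\), and \(D_d\).
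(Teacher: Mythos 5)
Your proposal is correct and is precisely the paper's own argument: the theorem is stated there as an immediate consequence of combining the first identity in Theorem~\ref{theorem:md-int} with the bounds of Lemmas~\ref{lemma:md-halfbound} and~\ref{lemma:md-shiftbound}, adding the exponents and exponentiating. Your remark on the domains (that \(D_d \subseteq D = D_{1/2}\), so both lemmas apply at the same \(x\)) matches the paper's own remark on \(D_d \cap D_{1/2} = D_d\), so nothing is missing.
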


We note that Theorem~\ref{theorem:md-bound} amounts to a generalisation
and improvement of the known two-sided bounds,
\begin{equation*}
  \exp\group[\bigg]{-\frac{1}{24x}}
  \leq \frac{\Pi(x)}{S_{1/2}(x)}
  \leq \exp\group[\bigg]{-\frac{1}{24x+24+3x^{-1}}}
\end{equation*}
found in \cite{BBE11},
from \(d = \tfrac{1}{2}\) to a general \(d \in \Rbbb\).
Specifically,
Theorem~\ref{theorem:md-bound} also relates to \cite{Mortici},
where it is shown that \(d = \tfrac{1}{2} \pm \tfrac{\sqrt{3}}{6}\)
asymptotically are the best choices.
Indeed this is obvious from Theorem~\ref{theorem:md-bound},
as with these two choices,
one has that
\begin{equation*}
  \frac{2x + \max\groupb{2d,1}}{(d-\frac{1}{2})^2} \sim 24x+\mathrm{const.} \sim \frac{2x + \min\groupb{2d,1}}{(d-\frac{1}{2})^2}
\end{equation*}
holds for \(x \to \infty\).

\section{Improved Bounds for Two of Stirling's Asymptotic Formulas}

As will be evident from the numerical comparisons in Section~\ref{sec:numcomp},
the bounds in Theorem~\ref{theorem:md-bound} are not as tight
when \(d \neq \tfrac{1}{2}\) as when \(d = \tfrac{1}{2}\).
For example, bounds already established by Robbins in \cite{Robbins}
for \(d = 0\) and positive integers outperform those in Theorem~\ref{theorem:md-bound}.

Therefore, we will now also show how one may use the one integral version
of Stirling's asymptotic formula found in Theorem~\ref{theorem:md-int}
to achieve better bounds when \(d = 0\) or \(d = 1\).
We first supply the following proposition,
to simplify considering these two cases:

\begin{proposition}
  For any \(y \in \Rbbb_{\geq 1/2}\) and \(t \in {[0, \frac{1}{2}]}\)
  one has
  \begin{equation*}
    (y - \tfrac{1}{2}) (y + \tfrac{1}{2})
    \leq y^2 - t^2
    \leq (y + \tfrac{\sqrt{2}-1}{2} - \tfrac{1}{2}) (y + \tfrac{\sqrt{2}-1}{2} + \tfrac{1}{2}) ,
  \end{equation*}
  where the right inequality can be strengthened to
  \begin{equation*}
    y^2 - t^2
    \leq (y + \tfrac{\sqrt{10}-3}{2} - \tfrac{1}{2}) (y + \tfrac{\sqrt{10}-3}{2} + \tfrac{1}{2}) ,
  \end{equation*}
  when \(y \in \Rbbb_{\geq 3/2}\).
\end{proposition}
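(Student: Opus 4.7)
My plan is to follow the same two-step pattern used in the preceding proposition: note that the left inequality is immediate, then verify the right inequalities by direct expansion of the product, substitution of the extreme admissible value of \(y\), and a short algebraic identity.

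For the left inequality I would simply observe that
\[
(y - \tfrac{1}{2})(y + \tfrac{1}{2}) = y^2 - \tfrac{1}{4} \leq y^2 - t^2,
\]
which holds because \(t \in [0, \tfrac{1}{2}]\) implies \(t^2 \leq \tfrac{1}{4}\).

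For the right inequality with \(y \in \Rbbb_{\geq 1/2}\), I would set \(c \isdef \tfrac{\sqrt{2}-1}{2}\) and expand
\[
(y + c - \tfrac{1}{2})(y + c + \tfrac{1}{2}) = y^2 + 2cy + c^2 - \tfrac{1}{4}.
\]
Using \(y \geq \tfrac{1}{2}\) together with \(c \geq 0\) gives the lower bound \(y^2 + c + c^2 - \tfrac{1}{4}\); the algebraic identity \(c + c^2 = \tfrac{1}{4}\) (equivalent to the trivial check \(2(\sqrt{2}-1) + (3 - 2\sqrt{2}) = 1\)) then reduces this to \(y^2\), and since \(y^2 - t^2 \leq y^2\) the asserted bound follows. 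The strengthened version for \(y \in \Rbbb_{\geq 3/2}\) is handled identically with \(c \isdef \tfrac{\sqrt{10}-3}{2}\): after expansion and using \(y \geq \tfrac{3}{2}\) one obtains a lower bound of \(y^2 + 3c + c^2 - \tfrac{1}{4}\), and the corresponding identity \(3c + c^2 = \tfrac{1}{4}\) (equivalent to \(6(\sqrt{10}-3) + (19 - 6\sqrt{10}) = 1\)) again collapses it to \(y^2\).

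There is no genuine obstacle here: the constants \(\tfrac{\sqrt{2}-1}{2}\) and \(\tfrac{\sqrt{10}-3}{2}\) are precisely the positive roots of \(c^2 + c - \tfrac{1}{4} = 0\) and \(c^2 + 3c - \tfrac{1}{4} = 0\) respectively, so they are chosen exactly so that equality in the right inequality is attained at the worst-case corner \((y, t) = (\tfrac{1}{2}, 0)\), respectively \((y, t) = (\tfrac{3}{2}, 0)\). Once this observation is made, the proof reduces to the routine algebraic verifications above, mirroring the structure of the preceding proposition.
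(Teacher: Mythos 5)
Your proposal is correct and follows essentially the same route as the paper: the left inequality is the same trivial observation, and the right inequalities are verified by expanding the product, using the lower bound on \(y\) (with the constants \(\tfrac{\sqrt{2}-1}{2}\), \(\tfrac{\sqrt{10}-3}{2}\) nonnegative), and checking that the remaining constant terms cancel so that the product is at least \(y^2 \geq y^2 - t^2\). Your added remark that these constants are the positive roots of \(c^2+c-\tfrac{1}{4}=0\) and \(c^2+3c-\tfrac{1}{4}=0\) is a tidy way of packaging the same cancellation the paper carries out explicitly.
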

\begin{proof}
  We start by noting that the left inequality is trivial,
  as
  \begin{equation*}
    (y - \tfrac{1}{2}) (y + \tfrac{1}{2})
    = y^2 - (\tfrac{1}{2})^2
    \leq y^2 - t^2 .
  \end{equation*}
  Moreover,
  for the right inequality we simply calculate
  \begin{align*}
    &(y + \tfrac{\sqrt{2}-1}{2} - \tfrac{1}{2}) (y + \tfrac{\sqrt{2}-1}{2} + \tfrac{1}{2}) \\
    &\qquad = y^2 + (\sqrt{2}-1) y + \tfrac{1}{4} (\sqrt{2}-1)^2 - \tfrac{1}{4} \\
    &\qquad \geq y^2 + \tfrac{1}{2}(\sqrt{2}-1) + \tfrac{1}{4} (3 - 2\sqrt{2}) - \tfrac{1}{4} \\
    &\qquad = y^2 + \tfrac{3}{4} - \tfrac{1}{2} - \tfrac{1}{4} + \tfrac{\sqrt{2}}{2} - \tfrac{\sqrt{2}}{2} = y^2 .
  \end{align*}
  Lastly, for the strengthened right inequality we directly calculate
  \begin{align*}
    &(y + \tfrac{\sqrt{10}-3}{2} - \tfrac{1}{2}) (y + \tfrac{\sqrt{10}-3}{2} + \tfrac{1}{2}) \\
    &\qquad = y^2 + (\sqrt{10}-3) y + \tfrac{1}{4} (\sqrt{10}-3)^2 - \tfrac{1}{4} \\
    &\qquad \geq y^2 + \tfrac{3}{2}(\sqrt{10}-3) + \tfrac{1}{4} (19 - 6\sqrt{10}) - \tfrac{1}{4} \\
    &\qquad = y^2 + \tfrac{19}{4} - \tfrac{9}{2} - \tfrac{1}{4} + \tfrac{3\sqrt{10}}{2} - \tfrac{3\sqrt{10}}{2} = y^2 . \qedhere
  \end{align*}
\end{proof}

We now first consider the case of \(d = 0\),
for which we have the following result:

\begin{lemma}\label{lemma:md-zerobound}
  Define \(\underline{b_{S}^{0}}, \overline{b_{S}^{0}} \colon D_{0} \to \Rbbb\) by
  \begin{equation*}
    \underline{b_{S}^{0}}(x) \isdef \frac{1}{12 x + 6 (\sqrt{2}-1)}
    \qquad\text{and}\qquad
    \overline{b_{S}^{0}}(x) \isdef \frac{1}{12 x} ,
  \end{equation*}
  then for all \(x \in D_{0}\) we have
  \begin{equation*}
    \underline{b_{S}^{0}}(x)
    \leq \int_{0}^{\infty} \frac{\frac{1}{2} - \groupb{t}}{x + t} \dif\!t
    \leq \overline{b_{S}^{0}}(x)
  \end{equation*}
   and all three terms of the inequalities are convex and strictly decreasing in \(x\).
\end{lemma}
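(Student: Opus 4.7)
The plan is to follow the blueprint of the proof of Lemma~\ref{lemma:md-halfbound}, adjusting the endpoint of the symmetrisation to $d = 0$. Splitting $\int_0^{\infty}$ into unit pieces $\int_k^{k+1}$ and using the substitutions $u = \tfrac{1}{2} - s$ on $[0, \tfrac{1}{2}]$ and $u = \tfrac{1}{2} + s$ on $[\tfrac{1}{2}, 1]$ on each piece, then combining the two resulting integrals over a common denominator, should give the representation
\begin{equation*}
  \int_0^{\infty} \frac{\frac{1}{2} - \groupb{t}}{x+t} \dif\!t
  = \sum_{k=0}^{\infty} \int_0^{1/2} \frac{2 s^2}{\group[\big]{x + k + \tfrac{1}{2}}^2 - s^2} \dif\!s ,
\end{equation*}
where, in contrast to the $d = \tfrac{1}{2}$ case, the integrand is manifestly non-negative.

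With $y = x + k + \tfrac{1}{2}$, since $x \in D_0$ means $x > 0$ we have $y > \tfrac{1}{2}$ for every $k \geq 0$, so the preceding proposition applies uniformly in $k$ with the $\tfrac{\sqrt{2}-1}{2}$ version. Its two-sided bound on $y^2 - s^2$ controls $1/(y^2 - s^2)$ from both sides; factoring out $\int_0^{1/2} 2 s^2 \dif\!s = \tfrac{1}{12}$ and applying partial fractions to the resulting series produces two telescoping sums that collapse to $\tfrac{1}{12 x}$ and $\tfrac{1}{12 x + 6(\sqrt{2}-1)}$, exactly matching the asserted bounds.

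For convexity and strict monotonicity: the outer bounds are trivially convex and strictly decreasing on $\Rbbb_{>0}$. For the middle term I would use the representation above; each summand is of the form $2 s^2 / \group[\big]{(x + k + \tfrac{1}{2})^2 - s^2}$, which for fixed $k$ and $s \in (0, \tfrac{1}{2}]$ has strictly negative first derivative and strictly positive second derivative in $x$ (the latter being $2 s^2 \group[\big]{6 (x+k+\tfrac{1}{2})^2 + 2 s^2} / \group[\big]{(x+k+\tfrac{1}{2})^2 - s^2}^3$), hence is convex and strictly decreasing; these properties pass to the integral in $s$ and the absolutely convergent sum in $k$.

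The principal technical step, I expect, is verifying that the endpoint substitutions indeed yield the clean integrand $2 s^2 / \group[\big]{(x + k + \tfrac{1}{2})^2 - s^2}$ with the correct limits of integration; once this is in place the argument is a direct adaptation of Lemma~\ref{lemma:md-halfbound}, with the $\sqrt{2}$ version of the preceding proposition replacing the $\sqrt{5}$ version, and the sign of the integrand flipped so that the concave/increasing conclusion is replaced by a convex/decreasing one.
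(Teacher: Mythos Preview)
Your proposal is correct and follows essentially the same route as the paper: the paper derives the identical representation \(\sum_{k=0}^{\infty}\int_0^{1/2}\frac{2t^2}{(x+k+\tfrac{1}{2})^2-t^2}\dif\!t\), then applies the preceding proposition with \(y=x+k+\tfrac{1}{2}\) and telescopes exactly as you describe, using the \(\sqrt{2}\) branch for the lower bound and \((y-\tfrac{1}{2})(y+\tfrac{1}{2})\) for the upper. The only cosmetic difference is that the paper infers convexity and strict monotonicity ``immediately'' from the sign of the integrand, whereas you spell out the derivatives.
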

\begin{proof}
  We start by calculating
  \begin{align*}
    \int_{0}^{\infty} \frac{\frac{1}{2} - \groupb{t}}{x + t} \dif\!t
    &= \sum_{k=0}^\infty \int_{0}^{1} \frac{\frac{1}{2} - \groupb{t}}{x + k + t} \dif\!t \\
    &= \sum_{k=0}^\infty \group[\bigg]{\int_{0}^{\frac{1}{2}} \frac{\frac{1}{2} - t}{x + k + t} \dif\!t + \int_{\frac{1}{2}}^{1} \frac{\frac{1}{2} - t}{x + k + t} \dif\!t } \\
    &= \sum_{k=0}^\infty \group[\bigg]{\int_{0}^{\frac{1}{2}} \frac{t}{x + k + \frac{1}{2} - t} \dif\!t + \int_{0}^{\frac{1}{2}} \frac{- t}{x + k + \frac{1}{2} + t} \dif\!t} \\
    &= \sum_{k=0}^\infty \int_{0}^{\frac{1}{2}} \frac{2 t^2}{(x + k + \frac{1}{2})^2 - t^2} \dif\!t .
  \end{align*}
  Since \(2 t^2\geq 0\) holds,
  it is immediate from the last term in this calculation
  that the middle term of the inequalities is convex and strictly decreasing in \(x\),
  while this is trivial for the two outer terms of the inequalities.
  Now, using \(2 t^2\geq 0\) and \((x+k+ \frac{1}{2})^2 - t^2 \geq (x+k) (x+k + 1)\),
  we have
  \begin{align*}
    \int_{0}^{\infty} \frac{\frac{1}{2} - \groupb{t}}{x + t} \dif\!t
    &\leq \group[\bigg]{\int_{0}^{\frac{1}{2}} 2 t^2 \dif\!t} \sum_{k=0}^\infty \frac{1}{(x+k)(x+k+1)} \\
    &= \frac{1}{12} \sum_{k=0}^\infty \group[\bigg]{\frac{1}{x+k} - \frac{1}{x+k + 1}}
    = \frac{1}{12} \frac{1}{x} = \frac{1}{12 x}
  \end{align*}
  and, using \((x+k+ \frac{1}{2})^2 - t^2 \leq (x+k + \tfrac{\sqrt{2}-1}{2}) (x+k + \tfrac{\sqrt{2}-1}{2} + 1)\)
  as well as \(2 t^2 \geq 0\), also
  \begin{align*}
    &\int_{0}^{\infty} \frac{\frac{1}{2} - \groupb{t}}{x + t} \dif\!t \\
    &\qquad \geq \group[\bigg]{\int_{0}^{\frac{1}{2}} 2 t^2\dif\!t} \sum_{k=0}^\infty \frac{1}{(x+k + \tfrac{\sqrt{2}-1}{2}) (x+k + \tfrac{\sqrt{2}-1}{2} + 1)} \\
    &\qquad = \frac{1}{12} \sum_{k=0}^\infty \group[\bigg]{\frac{1}{x+k + \tfrac{\sqrt{2}-1}{2}} - \frac{1}{x+k + \tfrac{\sqrt{2}-1}{2} + 1}} \\
    &\qquad = \frac{1}{12} \frac{1}{x + \tfrac{\sqrt{2}-1}{2}}
    = \frac{1}{12 x + 6 (\sqrt{2}-1)} . \qedhere
  \end{align*}
\end{proof}

Similarily, for the case of \(d = 1\),
we have the following result:

\begin{lemma}\label{lemma:md-onebound}
  Define \(\underline{b_{S}^{1}}, \overline{b_{S}^{1}} \colon D_{0} \to \Rbbb\) by
  \begin{equation*}
    \underline{b_{S}^{1}}(x) \isdef \frac{1}{12 x + 6 (\sqrt{10}-1)}
    \qquad\text{and}\qquad
    \overline{b_{S}^{1}}(x) \isdef \frac{1}{12 x + 12} ,
  \end{equation*}
  then for all \(x \in D_{0}\) we have
  \begin{equation*}
    \underline{b_{S}^{1}}(x)
    \leq \int_{1}^{\infty} \frac{\frac{1}{2} - \groupb{t}}{x + t} \dif\!t
    \leq \overline{b_{S}^{1}}(x)
  \end{equation*}
  and all three terms of the inequalities are convex and strictly decreasing in \(x\).
\end{lemma}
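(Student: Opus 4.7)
The plan is to mirror the proof of Lemma~\ref{lemma:md-zerobound} very closely, exploiting the fact that the preceding proposition offers a \emph{stronger} right-hand bound $y^2 - t^2 \leq (y + \tfrac{\sqrt{10}-3}{2} - \tfrac{1}{2})(y + \tfrac{\sqrt{10}-3}{2} + \tfrac{1}{2})$ once we know $y \geq \tfrac{3}{2}$. The only substantive difference between the $d=0$ and $d=1$ cases is that the integration now starts at $t=1$ instead of $t=0$, which shifts the summation index by one and makes the stronger inequality applicable.

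First, I would decompose the integral period-by-period as
\begin{equation*}
  \int_{1}^{\infty} \frac{\frac{1}{2} - \groupb{t}}{x + t} \dif\!t
  = \sum_{k=1}^\infty \int_{0}^{1} \frac{\frac{1}{2} - \groupb{t}}{x + k + t} \dif\!t ,
\end{equation*}
then split each inner integral at $t = \tfrac{1}{2}$ and apply the substitution $t \mapsto \tfrac{1}{2} - t$ on $[0,\tfrac{1}{2}]$ and $t \mapsto t - \tfrac{1}{2}$ on $[\tfrac{1}{2},1]$, exactly as in Lemma~\ref{lemma:md-zerobound}. Combining the two halves then yields
\begin{equation*}
  \int_{1}^{\infty} \frac{\frac{1}{2} - \groupb{t}}{x + t} \dif\!t
  = \sum_{k=1}^\infty \int_{0}^{\frac{1}{2}} \frac{2 t^2}{(x + k + \frac{1}{2})^2 - t^2} \dif\!t .
\end{equation*}
Since the integrand is non-negative, convex and strictly decreasing in $x$ for each fixed $t$ and $k$, the claimed monotonicity and convexity of the middle term follow immediately, while these properties are trivial for the two outer terms.

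Next I would apply the preceding proposition with $y = x + k + \tfrac{1}{2}$. Because $x \geq 0$ and $k \geq 1$, we have $y \geq \tfrac{3}{2}$, so both the left inequality and the \emph{strengthened} right inequality are available. The left inequality $y^2 - t^2 \geq (x+k)(x+k+1)$ produces the upper estimate
\begin{equation*}
  \int_{1}^{\infty} \frac{\tfrac{1}{2} - \groupb{t}}{x + t} \dif\!t
  \leq \tfrac{1}{12} \sum_{k=1}^\infty \group[\bigg]{\frac{1}{x+k} - \frac{1}{x+k+1}}
  = \frac{1}{12(x+1)} = \overline{b_{S}^{1}}(x) ,
\end{equation*}
after using $\int_0^{1/2} 2t^2 \dif\!t = \tfrac{1}{12}$ and telescoping. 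Symmetrically, the strengthened right inequality $y^2 - t^2 \leq (x+k+\tfrac{\sqrt{10}-3}{2})(x+k+\tfrac{\sqrt{10}-3}{2}+1)$ yields
\begin{equation*}
  \int_{1}^{\infty} \frac{\tfrac{1}{2} - \groupb{t}}{x + t} \dif\!t
  \geq \tfrac{1}{12} \cdot \frac{1}{x+1+\tfrac{\sqrt{10}-3}{2}}
  = \frac{1}{12 x + 6(\sqrt{10}-1)} = \underline{b_{S}^{1}}(x) ,
\end{equation*}
again by telescoping.

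I do not expect a real obstacle here: the proof is structurally identical to that of Lemma~\ref{lemma:md-zerobound} and the only point requiring care is bookkeeping around the index shift $k \geq 1$ that makes the $y \geq \tfrac{3}{2}$ hypothesis of the strengthened proposition hold, thereby producing the sharper constant $\sqrt{10}$ in $\underline{b_{S}^{1}}$ rather than the $\sqrt{2}$ appearing in $\underline{b_{S}^{0}}$.
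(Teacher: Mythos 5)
Your proposal is correct and follows essentially the same route as the paper: the same periodic decomposition into $\sum_{k\geq 1}\int_0^{1/2} 2t^2/((x+k+\tfrac{1}{2})^2-t^2)\,\mathrm{d}t$, the same convexity/monotonicity observation, and the same application of the preceding proposition with $y=x+k+\tfrac{1}{2}\geq\tfrac{3}{2}$ (left inequality for the upper bound, strengthened right inequality with $\sqrt{10}$ for the lower bound) followed by telescoping. The resulting constants match the paper's exactly.
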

\begin{proof}
  We start by calculating
  \begin{align*}
    \int_{1}^{\infty} \frac{\frac{1}{2} - \groupb{t}}{x + t} \dif\!t
    &= \sum_{k=1}^\infty \int_{0}^{1} \frac{\frac{1}{2} - \groupb{t}}{x + k + t} \dif\!t \\
    &= \sum_{k=1}^\infty \int_{0}^{\frac{1}{2}} \frac{2 t^2}{(x + k + \frac{1}{2})^2 - t^2} \dif\!t ,
  \end{align*}
  analogously to the start in the proof of Lemma~\ref{lemma:md-zerobound}.
  Again, since \(2 t^2\geq 0\) holds,
  it is immediate from the last term in this calculation
  that the middle term of the inequalities is convex and strictly decreasing in \(x\),
  while this is trivial for the two outer terms of the inequalities.
  Next, using \(2 t^2\geq 0\) as well as \((x+k+ \frac{1}{2})^2 - t^2 \geq (x+k) (x+k + 1)\),
  we have
  \begin{align*}
    \int_{1}^{\infty} \frac{\frac{1}{2} - \groupb{t}}{x + t} \dif\!t
    &\leq \group[\bigg]{\int_{0}^{\frac{1}{2}} 2 t^2 \dif\!t} \sum_{k=1}^\infty \frac{1}{(x+k)(x+k+1)} \\
    &= \frac{1}{12} \sum_{k=1}^\infty \group[\bigg]{\frac{1}{x+k} - \frac{1}{x+k + 1}} \\
    &= \frac{1}{12} \frac{1}{x+1} = \frac{1}{12 x + 12}
  \end{align*}
  and, using \((x+k+ \frac{1}{2})^2 - t^2 \leq (x+k + \tfrac{\sqrt{10}-3}{2}) (x+k + \tfrac{\sqrt{10}-3}{2} + 1)\)
  as well as \(2 t^2 \geq 0\), also
  \begin{align*}
    &\int_{1}^{\infty} \frac{\frac{1}{2} - \groupb{t}}{x + t} \dif\!t \\
    &\qquad \geq \group[\bigg]{\int_{0}^{\frac{1}{2}} 2 t^2\dif\!t} \sum_{k=1}^\infty \frac{1}{(x+k + \tfrac{\sqrt{10}-3}{2}) (x+k + \tfrac{\sqrt{10}-3}{2} + 1)} \\
    &\qquad = \frac{1}{12} \sum_{k=1}^\infty \group[\bigg]{\frac{1}{x+k + \tfrac{\sqrt{10}-3}{2}} - \frac{1}{x+k + \tfrac{\sqrt{10}-3}{2} + 1}} \\
    &\qquad = \frac{1}{12} \frac{1}{x + \tfrac{\sqrt{10}-3}{2} + 1}
    = \frac{1}{12 x + 6 (\sqrt{10}-1)} . \qedhere
  \end{align*}
\end{proof}

Theorem~\ref{theorem:md-int}, together with
the Lemmas~\ref{lemma:md-zerobound} and~\ref{lemma:md-onebound},
now yields the following improved two-sided bounds for Stirling's asymptotic formulas
with \(d = 0\) and \(d = 1\):
\begin{theorem}\label{theorem:md-zeroonebound}
  For any \(x \in D_0\), we have
  \begin{equation*}
    \exp\group[\bigg]{\frac{1}{12 x + 6(\sqrt{2}-1)}}
    \leq \frac{\Pi(x)}{S_0(x)}
    \leq \exp\group[\bigg]{\frac{1}{12 x}} ,
  \end{equation*}
  while for any \(x \in D_1\)
  \begin{equation*}
    \exp\group[\bigg]{\frac{1}{12 x + 6(\sqrt{10}-1)}}
    \leq \frac{\Pi(x)}{S_1(x)}
    \leq \exp\group[\bigg]{\frac{1}{12 x + 12}}
  \end{equation*}
  holds.
\end{theorem}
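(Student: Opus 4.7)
The plan is to apply the simplified form of Theorem~\ref{theorem:md-int} (valid for \(d \in [0,1]\)) and then immediately invoke the dedicated bounds on the remaining single integral given by Lemma~\ref{lemma:md-zerobound} for \(d = 0\) and Lemma~\ref{lemma:md-onebound} for \(d = 1\). Concretely, for \(d \in \{0,1\}\) and \(x \in D_d\), Theorem~\ref{theorem:md-int} specialises to the identity
\begin{equation*}
  \frac{\Pi(x)}{S_d(x)} = \exp\group[\bigg]{\int_d^{\infty} \frac{\tfrac{1}{2} - \groupb{t}}{x+t} \dif\! t},
\end{equation*}
since the second, correction integral vanishes for \(c = d\). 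Thus the claim reduces purely to bounding the exponent, and this is precisely what the two lemmas do.

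For \(d = 0\) and \(x \in D_0\), I would substitute the bounds from Lemma~\ref{lemma:md-zerobound}, namely
\begin{equation*}
  \frac{1}{12 x + 6 (\sqrt{2}-1)}
  \leq \int_{0}^{\infty} \frac{\tfrac{1}{2} - \groupb{t}}{x + t} \dif\! t
  \leq \frac{1}{12 x},
\end{equation*}
and apply the monotone increasing function \(\exp\) to both inequalities, yielding the first pair of inequalities in the theorem. For \(d = 1\) and \(x \in D_1\), the argument is identical, now drawing the bounds on \(\int_1^{\infty}(\tfrac{1}{2}-\groupb{t})/(x+t)\dif\! t\) from Lemma~\ref{lemma:md-onebound} and exponentiating to obtain the second pair.

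There is no genuine obstacle here: the work has already been done in Sections~\ref{sec:StirlingIdentity} and the preceding lemmas of the present section, and both \(d = 0\) and \(d = 1\) lie in \([0,1]\), so the simplified form of Theorem~\ref{theorem:md-int} applies directly. The only small point worth checking is the domain compatibility — Lemma~\ref{lemma:md-onebound} is stated on \(D_0\), which contains \(D_1\), so the hypothesis \(x \in D_1\) in the theorem suffices — after which the proof is a one-line assembly of the identity with the two bounds.
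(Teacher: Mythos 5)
Your proposal is correct and is essentially the paper's own proof: the theorem is obtained exactly by specialising the simplified identity of Theorem~\ref{theorem:md-int} to \(d=0\) and \(d=1\) and exponentiating the bounds of Lemmas~\ref{lemma:md-zerobound} and~\ref{lemma:md-onebound}. One small correction to your domain remark: the containment goes the other way, since \(D_0 = \Rbbb_{>0}\) while \(D_1 = \Rbbb_{\geq 0}\), so \(D_1 \supsetneq D_0\) and the point \(x=0\) of the \(d=1\) case is not literally covered by Lemma~\ref{lemma:md-onebound} as stated on \(D_0\); this mismatch is present in the paper as well, and is harmless because the proof of that lemma (the integral starts at \(t=1\), so \(y = x+k+\tfrac12 \geq \tfrac32\) for \(k\geq 1\), \(x\geq 0\)) goes through verbatim at \(x=0\).
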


\begin{remark}
  At first glance one sees that the lower bound
  \begin{equation*}
    \exp\group[\bigg]{\frac{1}{12 x + 6(\sqrt{2}-1)}}
    \leq \frac{\Pi(x)}{S_0(x)}
    \leq \exp\group[\bigg]{\frac{1}{12 x}}
  \end{equation*}
  for any \(x \in D_0\) in Theorem~\ref{theorem:md-zeroonebound}
  is weaker than the bound
  \begin{equation*}
    \exp\group[\bigg]{\frac{1}{12 n + 1}}
    \leq \frac{\Pi(n)}{S_0(n)}
    \leq \exp\group[\bigg]{\frac{1}{12 n}}
  \end{equation*}
  already established by Robbins in \cite{Robbins}.
  However, it is important to note that this bound is
  proven in \cite{Robbins} only for positive integers \(n\).
  Indeed, if one requires \(x \geq 1\) rather than
  \(x > 0\) to hold in Lemma~\ref{lemma:md-zerobound},
  then one can replace the term \(\tfrac{\sqrt{2}-1}{2}\)
  by \(\tfrac{\sqrt{10}-3}{2}\)
  and hence (minusculely) improves the version of Robbins bound:
  \begin{equation*}
    \exp\group[\bigg]{\frac{1}{12 x + 1}}
    < \exp\group[\bigg]{\frac{1}{12 x + 6(\sqrt{10}-3)}}
    \leq \frac{\Pi(x)}{S_0(x)}
  \end{equation*}
  holds for all \(x \geq 1\),
  as \(1 > 6(\sqrt{10}-3)\).
\end{remark}

\section{Improved Stirling-Gautschi Bounds of Burnside's Type}

It is obvious that Theorem~\ref{theorem:imd-int},
together with the upper bound found in Lemma~\ref{lemma:loginterpol-bound}
and its corresponding trivial lower bound,
as well as the two-sided bound of Lemmas~\ref{lemma:md-halfbound}
and~\ref{lemma:md-shiftbound}
or either~\ref{lemma:md-zerobound} or~\ref{lemma:md-onebound}
will turn the Stirling-Gautschi inequality
into two-sided Stirling-Gautschi bounds.
For example, restricting ourselves to \(d = \frac{1}{2}\)
we immediately arrive at the two-sided Stirling-Gautschi bounds of Burnside's type,
\begin{align*}
  &\exp\group[\bigg]{-\frac{1}{24(x + \alpha) + 12}} \\
  &\qquad \leq \frac{\widehat{\Pi}(x, \alpha)}{S_{1/2}(x+\alpha)} \\
  &\qquad\qquad\quad \leq \exp\group[\bigg]{\frac{1}{8(x + \alpha) + 3} - \frac{1}{24(x + \alpha) + 12(\sqrt{5}-1)}}
\end{align*}
for all \((x, \alpha) \in \widehat{D}_{1/2}\).
The expression for the upper bound can be simplified,
while keeping its asymptotic behaviour but worsening
the bound near zero,
using the following lemma:
\begin{lemma}\label{lemma:badburnsidestirlinggautschi}
  For all \(y \in \Rbbb_{\geq 0}\) one has
  \begin{equation*}
    \frac{1}{8y + 3} - \frac{1}{24y + 12(\sqrt{5}-1)}
    \leq \overline{b_{SG}}(y)
    \isdef \frac{1}{12y + 3} .
  \end{equation*}
\end{lemma}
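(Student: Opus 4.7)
The plan is to verify the claim by a direct algebraic manipulation, reducing the stated inequality to a trivially verifiable linear one. First, I would combine the two fractions on the left-hand side over their common denominator $(8y+3)(24y + 12(\sqrt{5}-1))$, which yields
\begin{equation*}
  \frac{1}{8y+3} - \frac{1}{24y + 12(\sqrt{5}-1)}
  = \frac{16y + 12\sqrt{5} - 15}{(8y+3)\group[\big]{24y + 12(\sqrt{5}-1)}} .
\end{equation*}
Since all three denominators appearing in the claim are strictly positive for $y \geq 0$, it suffices to cross-multiply and show
\begin{equation*}
  (16y + 12\sqrt{5} - 15)(12y+3) \leq (8y+3)\group[\big]{24y + 12(\sqrt{5}-1)} .
\end{equation*}

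Next, I would expand both sides. The leading $y^2$-coefficients agree (both equal $192$), and the constant terms simplify to $36\sqrt{5} - 45$ on the left and $36\sqrt{5} - 36$ on the right. After collecting the linear-in-$y$ terms, the inequality reduces to
\begin{equation*}
  (48\sqrt{5} - 108) y \leq 9 .
\end{equation*}
The main (and essentially only) observation is that $48\sqrt{5} < 108$, which follows from $(48)^2 \cdot 5 = 11520 < 11664 = (108)^2$, so the coefficient of $y$ on the left is negative. Hence $(48\sqrt{5} - 108) y \leq 0 \leq 9$ for all $y \in \Rbbb_{\geq 0}$, and the asserted inequality follows.

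There is no real obstacle to this proof; the entire content is the numerical fact $48\sqrt{5} < 108$ (equivalently $\sqrt{5} < \tfrac{9}{4}$), which ensures that the leading linear term in the reduced inequality points in the favourable direction. The only minor care required is to track signs while cross-multiplying, which is justified by the positivity of $(8y+3)$, $(24y + 12(\sqrt{5}-1))$, and $(12y+3)$ for $y \geq 0$.
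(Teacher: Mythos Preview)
Your proof is correct and follows essentially the same route as the paper: cross-multiply (using positivity of the three denominators) to reduce to the equivalent polynomial inequality, expand both sides, and observe that the resulting linear inequality $(48\sqrt{5}-108)y \leq 9$ holds for all $y \geq 0$ because $48\sqrt{5} < 108$. Your explicit squaring argument for $48\sqrt{5} < 108$ is a slightly cleaner justification than the paper's numerical approximation, but otherwise the arguments coincide.
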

\begin{proof}
  The inequality
  \begin{equation*}
    \frac{1}{8y + 3} - \frac{1}{24y + 12(\sqrt{5}-1)}
    \leq \frac{1}{12y + 3}
  \end{equation*}
  is equivalent to the inequality
  \begin{equation*}
    (16y + 12\sqrt{5} - 15)(12y + 3)
    \leq (8y + 3)(24y + 12\sqrt{5} - 12) .
  \end{equation*}
  However, simply multiplying this out gives
  \begin{multline*}
    192 y^2 + (144\sqrt{5}-132) y + 36\sqrt{5}-45 \\
    \leq 192 y^2 + (96\sqrt{5}-24) y + 36\sqrt{5}-36 ,
  \end{multline*}
  which is equivalent to the inequality
  \begin{equation*}
    0 \leq (108 - 48\sqrt{5}) y + 9
  \end{equation*}
  which is obviously true for all \(y \in \Rbbb_{\geq 0}\),
  as \(108 > 48\sqrt{5} \approx 107.331\).
\end{proof}
Using the preceeding lemma,
we thus have the simplified two-sided Stirling-Gautschi bounds of Burnside's type,
that is
\begin{equation*}
  \exp\group[\bigg]{-\frac{1}{24(x + \alpha) + 12}}
  \leq \frac{\widehat{\Pi}(x, \alpha)}{S_{1/2}(x+\alpha)}
  \leq \exp\group[\bigg]{\frac{1}{12(x + \alpha) + 3}}
\end{equation*}
holds for all \((x, \alpha) \in \widehat{D}_{1/2}\).
However,
we will see in the numerical comparisons
that the upper bound above is quite good asymptotically
but this is not the case near zero.

To address this,
we therefore derive a stricter upper bound.
For this, we first note that Burnside's asymptotic Stirling formula
has the following special behaviour when considering it for
piecewise logarithmic interpolations of the pi function:
\begin{lemma}\label{lemma:burnsidestirlinggautschi}
  For any \(x \in D_{1/2}\),
  the map
  \begin{equation*}
    {[0,1]} \to \Rbbb, \alpha \mapsto \widehat{m}_{1/2}(x, \alpha)
  \end{equation*}
  is concave and we have that
  \begin{equation*}
    \argmin_{\alpha \in {[0,1]}} \widehat{m}_{1/2}(x, \alpha)
    = 0
    \qquad\text{and}\qquad
    \argmax_{\alpha \in {[0,1]}} \widehat{m}_{1/2}(x, \alpha)
    = \tfrac{1}{2}
  \end{equation*}
  hold.
\end{lemma}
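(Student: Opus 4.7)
The plan is to use the fact that the $\alpha$-dependence of $\widehat{m}_{1/2}(x,\alpha)$ is completely elementary: since $\widehat{\Pi}(x,\alpha) = \Pi(x)(x+1)^\alpha$ has an explicit closed form in $\alpha$, and $S_{1/2}(y)$ has an explicit closed form in $y$, we can differentiate $\widehat{m}_{1/2}(x,\alpha)$ with respect to $\alpha$ directly, without using any of the integral identities from the earlier sections.

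First I would unfold the definition. Writing $S_{1/2}(y) = \sqrt{2\pi}\, e^{-1/2}(y+\tfrac{1}{2})^{y+1/2} e^{-y}$, one obtains the closed form
\begin{equation*}
  \widehat{m}_{1/2}(x,\alpha) = \log\Pi(x) + \alpha\log(x+1) - \tfrac{1}{2}\log(2\pi) + \tfrac{1}{2} - (x+\alpha+\tfrac{1}{2})\log(x+\alpha+\tfrac{1}{2}) + (x+\alpha).
\end{equation*}
Differentiating with respect to $\alpha$ (with $x$ fixed), the polynomial and logarithmic pieces collapse, giving
\begin{equation*}
  \frac{\partial}{\partial\alpha}\widehat{m}_{1/2}(x,\alpha) = \log\frac{x+1}{x+\alpha+\tfrac{1}{2}}, \qquad \frac{\partial^2}{\partial\alpha^2}\widehat{m}_{1/2}(x,\alpha) = -\frac{1}{x+\alpha+\tfrac{1}{2}} < 0.
\end{equation*}
The negativity of the second derivative establishes strict concavity on $[0,1]$, and the first derivative vanishes exactly when $x+\alpha+\tfrac{1}{2} = x+1$, i.e.\ at $\alpha = \tfrac{1}{2}$. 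By strict concavity this is the unique maximiser of $\widehat{m}_{1/2}(x,\cdot)$ on $[0,1]$.

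For the minimum, strict concavity forces it to be attained at an endpoint of $[0,1]$. Using $\widehat{\Pi}(x,0) = \Pi(x)$ and $\widehat{\Pi}(x,1) = (x+1)\Pi(x) = \Pi(x+1)$, the endpoint values reduce to
\begin{equation*}
  \widehat{m}_{1/2}(x,0) = m_{1/2}(x) \quad\text{and}\quad \widehat{m}_{1/2}(x,1) = m_{1/2}(x+1).
\end{equation*}
Lemma~\ref{lemma:md-halfbound} asserts that $m_{1/2}$ is strictly increasing on $D_{1/2}$, so $\widehat{m}_{1/2}(x,0) < \widehat{m}_{1/2}(x,1)$, giving the unique minimiser $\alpha = 0$.

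There is essentially no main obstacle here; the only thing to keep in mind is that one must notice that no integral representation is required, since $\widehat{\Pi}(x,\alpha)$ is genuinely elementary in $\alpha$. The identification of the endpoint values with $m_{1/2}(x)$ and $m_{1/2}(x+1)$ and the appeal to the strict monotonicity from Lemma~\ref{lemma:md-halfbound} then close the argument cleanly.
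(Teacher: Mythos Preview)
Your argument is correct and essentially mirrors the paper's: both compute the same first and second partial derivatives in $\alpha$, obtain strict concavity, and locate the maximiser at $\alpha=\tfrac{1}{2}$. The only difference is in comparing the two endpoints: the paper stays self-contained by writing $\widehat{m}_{1/2}(x,\alpha)=\widehat{m}_{1/2}(x,\tfrac{1}{2})+\int_{1/2}^{\alpha}\bigl(\log(x+1)-\log(x+t+\tfrac{1}{2})\bigr)\,dt$ and reading off $\widehat{m}_{1/2}(x,\alpha)<\widehat{m}_{1/2}(x,1-\alpha)$ for $\alpha<\tfrac{1}{2}$, whereas you identify the endpoint values with $m_{1/2}(x)$ and $m_{1/2}(x+1)$ and invoke the strict monotonicity of $m_{1/2}$ already recorded in Lemma~\ref{lemma:md-halfbound}. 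Your route is slightly more economical since it reuses a prior result; the paper's is self-contained within this lemma. Both are entirely valid.
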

\begin{proof}
  We start by calculating the partial derivative of \(\widehat{m}_{1/2}\)
  with respect to \(\alpha\),
  which is
  \begin{equation*}
    \frac{\partial}{\partial \alpha} \widehat{m}_{1/2}(x, \alpha)
    = \log(x+1) - \log(x+\alpha+\tfrac{1}{2}) .
  \end{equation*}
  From this we immediately see
  that the function \(\alpha \mapsto \widehat{m}_{1/2}(x, \alpha)\)
  is increasing on \([0, \tfrac{1}{2})\), decreasing on \((\tfrac{1}{2}, 1]\)
  and has a critical point at \(\alpha = \tfrac{1}{2}\).
  Additionally,
  it is clear that the second partial derivative of \(\widehat{m}_{1/2}\)
  with respect to \(\alpha\)\ is
  \begin{equation*}
    \frac{\partial^2}{\partial \alpha^2} \widehat{m}_{1/2}(x, \alpha)
    = - \frac{1}{x+\alpha+\tfrac{1}{2}} .
  \end{equation*}
  This thus proves that the map \(\alpha \mapsto \widehat{m}_{1/2}(x, \alpha)\)
  is concave as well as that 
  \begin{equation*}
    \argmax_{\alpha \in {[0,1]}} \widehat{m}_{1/2}(x, \alpha) = \tfrac{1}{2}
    \qquad\text{and}\qquad
    \argmin_{\alpha \in {[0,1]}} \widehat{m}_{1/2}(x, \alpha) \in \{0,1\}
  \end{equation*}
  hold.
  Finally, from
  \begin{equation*}
    \widehat{m}_{1/2}(x, \alpha)
    = \widehat{m}_{1/2}(x, \tfrac{1}{2}) + \int_{\frac{1}{2}}^{\alpha} \log(x+1) - \log(x+t+\tfrac{1}{2}) \dif\!t ,
  \end{equation*}
  one obviously derives that
  \begin{equation*}
    \widehat{m}_{1/2}(x, \alpha)
    < \widehat{m}_{1/2}(x, 1-\alpha)
  \end{equation*}
  holds for all \(\alpha \in [0,\tfrac{1}{2})\)
  and arrives at the asserted
  \begin{equation*}
    \argmin_{\alpha \in {[0,1]}} \widehat{m}_{1/2}(x, \alpha)
    = 0 . \qedhere
  \end{equation*}
\end{proof}

Lemma~\ref{lemma:burnsidestirlinggautschi} implies
that investigating \(\widehat{m}_{1/2}(x, \tfrac{1}{2})\)
could possibly help to get a tighter upper bound.
Indeed,
one arrives at the following result:
\begin{lemma}\label{lemma:burnsidestirlinggautschihalfbound}
  For any \(x \in D_{1/2}\) one has
  \begin{equation*}
    \widehat{m}_{1/2}(x, \tfrac{1}{2})
    \leq \frac{1}{12 x + 12} ,
  \end{equation*}
  and both sides of the inequality are convex and strictly decreasing in \(x\).
\end{lemma}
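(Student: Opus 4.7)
The plan is to reduce the statement to Lemma~\ref{lemma:md-onebound} via the algebraic identity
\[
  \widehat{m}_{1/2}(x, \tfrac{1}{2}) \;=\; m_0(x+1),
\]
which I would establish first. Using $\Pi(x+1) = (x+1)\Pi(x)$ gives $\widehat{\Pi}(x,\tfrac{1}{2}) = \Pi(x)(x+1)^{1/2} = \Pi(x+1)(x+1)^{-1/2}$, while a direct computation from the definition of $S_d$ shows $S_{1/2}(x+\tfrac{1}{2}) = \sqrt{2\pi}\,(x+1)^{x+1}e^{-x-1}$ and $S_0(x+1) = \sqrt{2\pi}\,(x+1)^{x+3/2}e^{-x-1}$, so $S_0(x+1) = (x+1)^{1/2}\,S_{1/2}(x+\tfrac{1}{2})$. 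Dividing the two identities yields
\[
  \frac{\widehat{\Pi}(x,\tfrac{1}{2})}{S_{1/2}(x+\tfrac{1}{2})} \;=\; \frac{\Pi(x+1)}{S_0(x+1)},
\]
and taking logarithms gives the claimed identity.

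Next, since $x \in D_{1/2} = \Rbbb_{\geq 0}$ implies $x+1 \in D_0$, Lemma~\ref{lemma:md-(0,1)-int} applied with $d=0$ at the point $x+1$ yields
\[
  m_0(x+1) \;=\; \int_0^{\infty} \frac{\tfrac{1}{2} - \{t\}}{x+1+t}\,dt \;=\; \int_1^{\infty} \frac{\tfrac{1}{2} - \{s\}}{x+s}\,ds,
\]
where the second equality comes from the substitution $s = t+1$ together with the $1$-periodicity of $\{\cdot\}$. Lemma~\ref{lemma:md-onebound} then states precisely that this last integral is bounded above by $\tfrac{1}{12x+12}$ and is convex and strictly decreasing in $x$, and the right-hand side is manifestly convex and strictly decreasing on $\Rbbb_{\geq 0}$.

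The minor technical point is that Lemma~\ref{lemma:md-onebound} is stated for $x \in D_0 = \Rbbb_{>0}$ rather than $\Rbbb_{\geq 0}$; the boundary case $x = 0$ is recovered by continuity (both sides are continuous on $\Rbbb_{\geq 0}$), or equivalently by noting that its proof requires only $x + k \geq 1$ for all $k \geq 1$, which still holds at $x=0$. No step in this plan presents a serious obstacle: the substantive content lies entirely in spotting the algebraic identity of the first paragraph, after which the result is an immediate consequence of already-established material.
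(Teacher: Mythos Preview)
Your proof is correct. Both you and the paper reduce the claim to the integral $\int_1^{\infty}\frac{\frac{1}{2}-\{t\}}{x+t}\,dt$ and then invoke Lemma~\ref{lemma:md-onebound}, so the substantive step is the same. The only difference is how that integral is reached: the paper starts from the decomposition of Lemma~\ref{lemma:imd-int} and then asserts that the two resulting integrals ``simplify'' to the single one (which, when unpacked, amounts to a shift $s=t+\tfrac{1}{2}$ and a piecewise check); you instead spot the clean algebraic identity $\widehat{m}_{1/2}(x,\tfrac{1}{2})=m_0(x+1)$ directly from the definitions of $\widehat{\Pi}$, $S_{1/2}$ and $S_0$, and then apply Lemma~\ref{lemma:md-(0,1)-int} with a unit shift. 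Your route is arguably more transparent, and your explicit handling of the endpoint $x=0$ (by continuity, or by noting that the proof of Lemma~\ref{lemma:md-onebound} only needs $x+k\geq 1$ for $k\geq 1$) is a point the paper leaves implicit.
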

\begin{proof}
  We start by noting that from Lemma~\ref{lemma:imd-int} we have
  \begin{equation*}
    \widehat{m}_{1/2}(x, \tfrac{1}{2})
    = \int_1^{\infty} \frac{\phi_{1/2}(t)}{x+t} \dif\!t + \int_{\tfrac{1}{2}}^{\infty} \frac{\frac{1}{2} - \groupb{t}}{x+\tfrac{1}{2} + t} \dif\!t . 
  \end{equation*}
  It is easy to see that the two integrals on the right-hand side simplify to give
  \begin{equation*}
    \widehat{m}_{1/2}(x, \tfrac{1}{2})
    = \int_1^{\infty} \frac{\frac{1}{2} - \groupb{t}}{x + t} \dif\!t . 
  \end{equation*}
  Now, simply applying Lemma~\ref{lemma:md-onebound} gives
  \begin{equation*}
    \widehat{m}_{1/2}(x, \tfrac{1}{2})
    \leq \frac{1}{12 x + 12},
  \end{equation*}
  and also the fact that both sides of the inequality are convex and strictly decreasing in \(x\).
\end{proof}

Lemma~\ref{lemma:burnsidestirlinggautschi}
and Lemma~\ref{lemma:burnsidestirlinggautschihalfbound}
directly give that
\begin{equation*}
  \widehat{m}_{1/2}(x, \alpha)
  \leq \widehat{m}_{1/2}(x, \tfrac{1}{2})
  \leq \frac{1}{12 x + 12}
  \leq \frac{1}{12 (x+\alpha)}
\end{equation*}
holds for every \((x, \alpha) \in \widehat{D}_{1/2}\).
It is clear that the last bound is unsatisfactory
as it is worse than the bound from Lemma~\ref{lemma:badburnsidestirlinggautschi},
but we can improve it as follows:
\begin{lemma}\label{lemma:goodburnsidestirlinggautschi}
  For every \((x, \alpha) \in \widehat{D}_{1/2}\) we have that
  \begin{equation*}
    \widehat{m}_{1/2}(x, \alpha)
    \leq \frac{1}{12 (x+\alpha) + K_x}
  \end{equation*}
  holds,
  where
  \begin{equation*}
    K_x = 12 - 6\group[\bigg]{(x+1) \exp\group[\bigg]{\frac{1}{12 (x+1)^2}}  - x}
  \end{equation*}
  is increasing in \(x\).
\end{lemma}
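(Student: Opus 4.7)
The plan is to start from an exact identity for $\widehat{m}_{1/2}(x, \alpha)$ obtained by integrating the derivative formula already derived in the proof of Lemma~\ref{lemma:burnsidestirlinggautschi}. Since $\partial_\alpha \widehat{m}_{1/2}(x, \alpha) = \log(x+1) - \log(x+\alpha+\tfrac{1}{2})$, integrating from $\alpha = \tfrac{1}{2}$ yields
\begin{equation*}
  \widehat{m}_{1/2}(x, \alpha) = \widehat{m}_{1/2}(x, \tfrac{1}{2}) - (x+\alpha+\tfrac{1}{2})\log\frac{x+\alpha+\tfrac{1}{2}}{x+1} + (\alpha - \tfrac{1}{2}).
\end{equation*}
Applying Lemma~\ref{lemma:burnsidestirlinggautschihalfbound} to bound $\widehat{m}_{1/2}(x, \tfrac{1}{2}) \leq \tfrac{1}{12(x+1)}$, and introducing $y = x + \alpha + \tfrac{1}{2}$ and $z = x+1$, the target inequality reduces to
\begin{equation*}
  y \log(y/z) - (y - z) \geq \frac{K_x - 12(1 - \alpha)}{12 z \, (12 y - 6 + K_x)}.
\end{equation*}
The LHS is always nonnegative: indeed, $h(y) := -y\log(y/z) + (y - z)$ satisfies $h(z) = 0$ and $h'(y) = -\log(y/z)$, so $h$ attains its maximum $0$ at $y = z$ and hence $h \leq 0$ throughout.

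Next, I would split on the threshold $\alpha^* := 1 - K_x/12$. For $\alpha \in [0, \alpha^*]$ the numerator on the right is nonpositive while the LHS is nonnegative, so the inequality is immediate. For $\alpha \in (\alpha^*, 1]$ it is non-trivial, and the strategy is to exploit convexity: writing $\psi(\alpha)$ for the difference between the RHS of the reduced inequality and the upper bound established above, $\psi$ is a convex function of $\alpha$ (a sum of the convex function $\alpha \mapsto 1/(12(x+\alpha)+K_x)$ and the negative of the concave map $\alpha \mapsto \widehat{m}_{1/2}(x, \alpha)$ controlled above), so its minimum on $[0,1]$ occurs either at an endpoint or at an interior critical point. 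Both endpoints are nonnegative (trivially at $\alpha = 0$, and at $\alpha = 1$ since $\widehat{m}_{1/2}(x, 1) = m_{1/2}(x+1) \leq 0$ by Lemma~\ref{lemma:md-halfbound}), and any interior critical point satisfies the transcendental relation $\log(y/z) = 12/(12y - 6 + K_x)^2$.

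The monotonicity claim is a short direct computation. Setting $a := 1/(12(x+1)^2)$, differentiation yields
\begin{equation*}
  K_x'(x) = 6\bigl[1 - (1 - 2a)\, e^{a}\bigr],
\end{equation*}
so it suffices to check $(1 - 2a) e^a \leq 1$ for $a > 0$. This follows from $f(a) := (1-2a)e^a$ having $f(0) = 1$ and $f'(a) = -(1+2a)e^a < 0$, giving $f(a) < 1$ for $a > 0$.

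The hard part will be verifying that the convex difference $\psi$ is nonnegative at its interior critical point: this is exactly where the precise form of $K_x$ is used. The factor $e^{1/(12(x+1)^2)}$ is engineered so that the saddle-point equation and the required bound on $\psi$ close consistently — the verification ultimately reduces to an inequality of the same flavour as $(1 - 2a) e^{a} \leq 1$ with $a = 1/(12(x+1)^2)$, which is precisely the mechanism underlying the monotonicity computation. Carrying out this matching between the transcendental critical-point equation and the closed-form $K_x$ is the main technical obstacle.
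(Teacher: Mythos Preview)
Your proposal leaves the decisive step unfinished. You correctly identify the splitting threshold $\alpha^* = 1 - K_x/12$ (this is exactly the quantity the paper calls $\beta_x$), and the easy range $\alpha \le \alpha^*$ is handled cleanly. Your monotonicity proof for $K_x$ is also complete and correct, and in fact more detailed than what the paper gives. But for the hard range $\alpha > \alpha^*$ you reduce to showing that the convex function $\psi(\alpha) = \tfrac{1}{12(x+\alpha)+K_x} - \widehat{m}_{1/2}(x,\alpha)$ is nonnegative at its interior critical point, and then stop: the transcendental relation $\log(y/z) = 12/(12y-6+K_x)^2$ is written down but never solved or bounded, and the remark that the verification is ``of the same flavour as $(1-2a)e^a \le 1$'' is not an argument. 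The endpoint checks are irrelevant here, since for a convex function any interior critical point \emph{is} the global minimum; showing $\psi(0),\psi(1)\ge 0$ tells you nothing about $\psi$ at the critical point.

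The paper avoids this obstacle by not looking for the minimiser of $\psi$ at all. It instead defines $\alpha_x$ through the \emph{explicitly solvable} equation
\[
  \partial_\alpha \widehat{m}_{1/2}(x,\alpha_x) \;=\; -\frac{1}{12(x+1)^2},
\]
giving $\alpha_x = (x+1)\exp\!\bigl(\tfrac{1}{12(x+1)^2}\bigr) - (x+\tfrac12)$, sets $\beta_x$ to be the midpoint of $\tfrac12$ and $\alpha_x$, and \emph{defines} $K_x$ so that $g(\beta_x)=\tfrac{1}{12(x+1)}$, which forces $g'(\beta_x)=-\tfrac{1}{12(x+1)^2}$ as well. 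On $[\beta_x,1]$ the paper then argues that the tangent line to the concave map $\alpha\mapsto\widehat{m}_{1/2}(x,\alpha)$ at $\alpha_x$ separates it from the convex $g$: the concave function lies below its tangent, and since this tangent has the same slope as $g$ at $\beta_x$, the convex $g$ lies above it. Thus the precise form of $K_x$ is engineered so that a single tangent line with a computable slope does the job, and no implicit critical-point equation has to be analysed. Your approach would have to rediscover this structure after the fact, which is why the ``main technical obstacle'' you flag does not dissolve on its own.
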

\begin{proof}
  We know that
  \begin{equation*}
    \widehat{m}_{1/2}(x, \alpha)
    \leq \widehat{m}_{1/2}(x, \tfrac{1}{2})
    \leq \frac{1}{12 x + 12}
  \end{equation*}
  holds.
  Note that we also know that
  \begin{equation*}
    \alpha \mapsto \widehat{m}_{1/2}(x, \alpha)
  \end{equation*}
  is a concave function for \(\alpha \in [0,1]\)
  and that
  \begin{equation*}
    x \mapsto \frac{1}{12 x + 12}
  \end{equation*}
  is a strictly decreasing and convex function in \(x\).
  We now compute the \(\alpha_x \in [0,1]\),
  where
  \begin{equation*}
    \frac{\partial}{\partial \alpha} \widehat{m}_{1/2}(x, \alpha_x)
    = \frac{\dif}{\dif x} \frac{1}{12 x + 12}
    = -\frac{1}{12 (x + 1)^2}
  \end{equation*}
  holds.
  This is equivalent to
  \begin{equation*}
    \log\frac{x+1}{x+\alpha_x+\tfrac{1}{2}}
    = -\frac{1}{12 (x + 1)^2} ,
  \end{equation*}
  which has the unique solution
  \begin{equation*}
    \alpha_x
    = (x+1) \exp\group[\bigg]{\frac{1}{12 (x+1)^2}} - (x+\tfrac{1}{2}) .
  \end{equation*}
  With this we choose the midpoint of \(\frac{1}{2}\) and \(\alpha_x\),
  \begin{equation*}
    \beta_x
    = \tfrac{1}{2}(\tfrac{1}{2} + \alpha_x)
    = \frac{x+1}{2} \exp\group[\bigg]{\frac{1}{12 (x+1)^2}} - \frac{x}{2} ,
  \end{equation*}
  to be where we will place the bound, that is we set
  \begin{equation*}
    \frac{1}{12 x + 12} = \frac{1}{12 (x + \beta_x) + K_x} ,
  \end{equation*}
  where
  \begin{equation*}
    K_x = 12 - 6\group[\bigg]{(x+1) \exp\group[\bigg]{\frac{1}{12 (x+1)^2}}  - x} .
  \end{equation*}
  With this we now have proven that
  \begin{align*}
    \widehat{m}_{1/2}(x, \alpha)
    \leq \widehat{m}_{1/2}(x, \tfrac{1}{2})
    &\leq \frac{1}{12 x + 12} \\
    &= \frac{1}{12 (x + \beta_x) + K_x}
    \leq \frac{1}{12 (x + \alpha) + K_x}
  \end{align*}
  holds for all \(\alpha \in {[0, \beta_x]}\).
  However,
  since
  \begin{equation*}
    \alpha \mapsto \widehat{m}_{1/2}(x, \alpha)
  \end{equation*}
  is a concave function for \(\alpha \in [\beta_x,1]\)
  and
  \begin{equation*}
    \alpha \mapsto \frac{1}{12 (x + \alpha) + K_x}
  \end{equation*}
  is a strictly decreasing and convex function for \(\alpha \in [\beta_x,1]\),
  it is straightforward to deduce that
  the tangent of
  \begin{equation*}
    \alpha \mapsto \widehat{m}_{1/2}(x, \alpha)
  \end{equation*}
  at \(\alpha_x\) will separate the two functions,
  showing that
  \begin{equation*}
    \widehat{m}_{1/2}(x, \alpha)
    \leq \frac{1}{12 (x + \alpha) + K_x}
  \end{equation*}
  also holds for all \(\alpha \in {[\beta_x, 1]}\).
  Finally,
  it is obvious that \(K_x\) is increasing in \(x\).
\end{proof}

From Lemma~\ref{lemma:goodburnsidestirlinggautschi}
we thus arrive at the following corollary:
\begin{corollary}\label{corollary:fairburnsidestirlinggautschi}
  Define \(\overline{b_{SG}^+} \colon \Rbbb_{\geq 0} \to \Rbbb\) by
  \begin{equation*}
    \overline{b_{SG}^+}(y) \isdef \frac{1}{12y + 12 - 6\exp(\tfrac{1}{12})} ,
  \end{equation*}
  and \(\overline{b_{SG}^\star} \colon \Rbbb_{\geq 0} \to \Rbbb\) by
  \begin{equation*}
    \overline{b_{SG}^\star}(y) \isdef \frac{1}{12y + 18 - 12 \exp(\tfrac{1}{48})} .
  \end{equation*}
  Then for all \((x,\alpha) \in \widehat{D}_{1/2}\) one has
  \begin{equation*}
    \widehat{m}_{1/2}(x, \alpha)
    \leq \overline{b_{SG}^+}(x + \alpha)
  \end{equation*}
  and,
  if \(x \geq 1\) holds additionally, also
  \begin{equation*}
    \widehat{m}_{1/2}(x, \alpha)
    \leq \overline{b_{SG}^\star}(x + \alpha) .
  \end{equation*}
\end{corollary}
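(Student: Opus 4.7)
The plan is to invoke Lemma~\ref{lemma:goodburnsidestirlinggautschi}, which already supplies the bound $\widehat{m}_{1/2}(x,\alpha) \leq \frac{1}{12(x+\alpha) + K_x}$ with $K_x$ monotonically increasing in $x$. Since on the domain $\widehat{D}_{1/2}$ we automatically have $x \geq 0$, and in the strengthened setting we have $x \geq 1$, the monotonicity of $K_x$ lets us replace $K_x$ by the smaller constants $K_0$ and $K_1$ respectively, provided the resulting denominators remain strictly positive (so that the direction of the inequality is preserved when enlarging the denominator).

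For the first assertion, I would simply substitute $x = 0$ into the defining formula for $K_x$, obtaining $K_0 = 12 - 6\bigl(1 \cdot \exp(\tfrac{1}{12}) - 0\bigr) = 12 - 6\exp(\tfrac{1}{12})$, which is exactly the constant appearing in $\overline{b_{SG}^+}$. A quick numerical check ($6\exp(1/12) \approx 6.52 < 12$) confirms that $12(x+\alpha) + K_0 > 0$ for all $x + \alpha \geq 0$. For the second assertion, substituting $x = 1$ yields $K_1 = 12 - 6\bigl(2\exp(\tfrac{1}{48}) - 1\bigr) = 18 - 12\exp(\tfrac{1}{48})$, the constant defining $\overline{b_{SG}^\star}$; again positivity is immediate because $12\exp(1/48) \approx 12.25 < 18$.

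Since Lemma~\ref{lemma:goodburnsidestirlinggautschi} carries all of the analytical weight, there is essentially no serious obstacle: the corollary amounts to direct evaluation of $K_x$ at the two natural endpoints $x = 0$ and $x = 1$, together with the trivial monotonicity observation that a larger positive denominator yields a smaller fraction. The only genuine care needed is the short numerical verification that $K_0$ and $K_1$ are positive, without which the substitution would not be valid as an inequality in the claimed direction. The result is then simply the reformulation of the $x$-dependent bound from Lemma~\ref{lemma:goodburnsidestirlinggautschi} as two cleaner, uniform bounds tailored to the ranges $x \geq 0$ and $x \geq 1$.
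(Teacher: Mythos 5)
Your proof is correct and matches the paper's intended argument: the corollary is stated as a direct consequence of Lemma~\ref{lemma:goodburnsidestirlinggautschi}, obtained exactly as you describe by evaluating the increasing quantity \(K_x\) at \(x=0\) and \(x=1\) (giving \(12-6\exp(\tfrac{1}{12})\) and \(18-12\exp(\tfrac{1}{48})\)) and using positivity of the denominators to preserve the inequality. No gaps.
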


From Corollary~\ref{corollary:fairburnsidestirlinggautschi}
we now finally arrive at the following
two-sided Stirling-Gautschi bounds of Burnside's type:
\begin{theorem}
  For all \((x, \alpha) \in \widehat{D}_{1/2}\) one has that
  \begin{align*}
    &\exp\group[\bigg]{-\frac{1}{24(x + \alpha) + 12}} \\
    &\qquad\leq \frac{\widehat{\Pi}(x, \alpha)}{S_{1/2}(x+\alpha)} \\
    &\quad\qquad\qquad \leq \exp\group[\bigg]{\frac{1}{12(x + \alpha) + 12 - 6\exp(\tfrac{1}{12})}} .
  \end{align*}
\end{theorem}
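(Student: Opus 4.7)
The plan is to pass to logarithms and argue the two inequalities separately. Writing
\begin{equation*}
  \widehat{m}_{1/2}(x, \alpha) = \log \frac{\widehat{\Pi}(x, \alpha)}{S_{1/2}(x+\alpha)},
\end{equation*}
the assertion is equivalent, after exponentiating, to the pair of bounds
\begin{equation*}
  -\frac{1}{24(x+\alpha)+12} \leq \widehat{m}_{1/2}(x, \alpha) \leq \frac{1}{12(x+\alpha)+12 - 6\exp(\tfrac{1}{12})}.
\end{equation*}

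The upper inequality is already packaged: it is precisely the first assertion of Corollary~\ref{corollary:fairburnsidestirlinggautschi}, namely $\widehat{m}_{1/2}(x, \alpha) \leq \overline{b_{SG}^+}(x+\alpha)$, so I would simply cite it with no further computation.

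For the lower inequality, my plan is to decompose
\begin{equation*}
  \widehat{m}_{1/2}(x, \alpha) = \widehat{\imath}(x, \alpha) + m_{1/2}(x+\alpha)
\end{equation*}
using Lemma~\ref{lemma:imd-int}, and then bound each summand from below separately. The first summand is non-negative: this is the trivial lower bound $\underline{b_G} = 0$ derived at the start of the section containing Lemma~\ref{lemma:loginterpol-bound} (equivalently, the logarithmic form of Wendel's version of Gautschi's inequality). The second summand is controlled by Lemma~\ref{lemma:md-halfbound}, which gives $m_{1/2}(x+\alpha) \geq \underline{b_S^{1/2}}(x+\alpha) = -1/(24(x+\alpha)+12)$. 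Adding the two lower bounds yields exactly the desired estimate on $\widehat{m}_{1/2}(x, \alpha)$.

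Since every ingredient has already been proved and the domain condition $(x,\alpha) \in \widehat{D}_{1/2}$ ensures $x+\alpha \geq 0$, so that both Lemma~\ref{lemma:md-halfbound} and Corollary~\ref{corollary:fairburnsidestirlinggautschi} apply, the proof is purely a matter of assembly and there is no genuinely hard step; the only reason the bounds look involved is that the constant $12 - 6\exp(\tfrac{1}{12})$ has been engineered upstream in Corollary~\ref{corollary:fairburnsidestirlinggautschi}.
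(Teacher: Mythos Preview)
The proposal is correct and follows essentially the same route as the paper: the upper bound is exactly the first assertion of Corollary~\ref{corollary:fairburnsidestirlinggautschi}, and the lower bound is obtained by combining the trivial lower bound $\widehat{\imath}(x,\alpha) \geq 0$ with the lower bound $\underline{b_S^{1/2}}$ of Lemma~\ref{lemma:md-halfbound} via the decomposition of Lemma~\ref{lemma:imd-int}, which is precisely how the paper derived the lower half of the displayed inequality earlier in the section.
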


\begin{remark}\label{remark:nonasymptoticburnsidestirlinggautschi}
  Note that in Lemmas~\ref{lemma:burnsidestirlinggautschihalfbound}
  and~\ref{lemma:md-halfbound}
  we also gave results on the monotonicity
  from which one can also immediately arrive at non-asymptotic bounds:
  \begin{equation*}
    \sqrt{\frac{e}{\pi}}
    = \frac{\widehat{\Pi}(0, 0)}{S_{1/2}(0)}
    \leq \frac{\widehat{\Pi}(x, \alpha)}{S_{1/2}(x+\alpha)}
    \leq \frac{\widehat{\Pi}(0, \tfrac{1}{2})}{S_{1/2}(\tfrac{1}{2})}
    = \frac{e}{\sqrt{2\pi}}
  \end{equation*}
  holds for all \((x, \alpha) \in \widehat{D}_{1/2}\).
\end{remark}

As a special case of the preceeding theorem and remark,
we finally have the following corollary
for the logarithmically interpolated factorials:
\begin{corollary}
  For all \(x \in \Rbbb_{\geq 0}\)
  \begin{equation*}
    \sqrt{2\pi} \group[\bigg]{\frac{x + \tfrac{1}{2}}{e}}^{x + \frac{1}{2}} e^{-\frac{1}{24 x + 12}}
    \leq x\widehat{!}
    \leq \sqrt{2\pi} \group[\bigg]{\frac{x + \tfrac{1}{2}}{e}}^{x + \frac{1}{2}} e^{\frac{1}{12 x + 12 - 6\exp(1/12)}}
  \end{equation*}
  and
  \begin{equation*}
    \sqrt{2e} \group[\bigg]{\frac{x + \tfrac{1}{2}}{e}}^{x + \frac{1}{2}}
    \leq x\widehat{!}
    \leq e \group[\bigg]{\frac{x + \tfrac{1}{2}}{e}}^{x + \frac{1}{2}}
  \end{equation*}
  hold.
\end{corollary}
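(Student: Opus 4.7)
The plan is to specialise the two-sided Stirling--Gautschi bounds established in the preceding theorem and in Remark~\ref{remark:nonasymptoticburnsidestirlinggautschi} to the pair $(\groupf{x}, \groupb{x}) \in \widehat{D}_{1/2}$ for an arbitrary real number $x \in \Rbbb_{\geq 0}$. Under that choice, by definition $x\widehat{!} = \widehat{\Pi}(\groupf{x}, \groupb{x})$ and the sum $\groupf{x} + \groupb{x}$ equals $x$, so every occurrence of $x + \alpha$ in the preceding estimates collapses to $x$. Before doing so, I would observe the trivial containment $(\groupf{x}, \groupb{x}) \in \widehat{D}_{1/2}$ for all $x \in \Rbbb_{\geq 0}$, since $x \geq 0 > -\tfrac{1}{2}$.

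The first step is to rewrite $S_{1/2}(x)$ into the Burnside shape used in the statement. Directly from the definition,
\begin{equation*}
S_{1/2}(x) = \sqrt{2\pi}\, e^{-1/2}\, (x+\tfrac{1}{2})^{x+\frac{1}{2}}\, e^{-x} = \sqrt{2\pi}\, \group[\bigg]{\frac{x+\tfrac{1}{2}}{e}}^{x+\frac{1}{2}} ,
\end{equation*}
using $e^{-x}e^{-1/2} = e^{-(x+1/2)}$. This is merely a rearrangement but makes the comparison with the corollary transparent.

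Next, I would multiply the two-sided bounds of the preceding theorem, evaluated at $(\groupf{x}, \groupb{x})$, through by $S_{1/2}(x)$. This immediately produces the first chain of inequalities asserted in the corollary. For the second pair of bounds, I would proceed identically starting from the non-asymptotic estimates in Remark~\ref{remark:nonasymptoticburnsidestirlinggautschi}, using the constant identities $\sqrt{e/\pi}\cdot \sqrt{2\pi} = \sqrt{2e}$ and $(e/\sqrt{2\pi})\cdot \sqrt{2\pi} = e$ to simplify the resulting prefactors.

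There is no essential obstacle: the corollary is a pure specialisation and cosmetic repackaging of results already established, the only verifications being the rewriting of $S_{1/2}(x)$ and the elementary constant identities noted above.
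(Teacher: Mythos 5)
Your proposal is correct and matches the paper's intent exactly: the corollary is stated there as a direct specialisation of the preceding theorem and Remark~\ref{remark:nonasymptoticburnsidestirlinggautschi} to \((\groupf{x}, \groupb{x})\), with the same rewriting \(S_{1/2}(x) = \sqrt{2\pi}\,\group[\big]{(x+\tfrac{1}{2})/e}^{x+\frac{1}{2}}\) and the same constant simplifications \(\sqrt{e/\pi}\cdot\sqrt{2\pi}=\sqrt{2e}\) and \((e/\sqrt{2\pi})\cdot\sqrt{2\pi}=e\). No gaps; your explicit check that \((\groupf{x},\groupb{x})\in\widehat{D}_{1/2}\) is a welcome detail the paper leaves implicit.
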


\section{Numerical Comparisons}\label{sec:numcomp}

We now illustrate the various bounds to indicate their effectiveness.
In Figure~\ref{figure:iotabounds} we show the behaviour of the bound
\begin{equation*}
  \widehat{\iota}(x, \alpha) = \log \frac{\widehat{\Pi}(x, \alpha)}{\Pi(x+\alpha)}
  \leq \overline{b_G^\star}(x + \alpha)
\end{equation*}
from Lemma~\ref{lemma:loginterpol-bound}.
This figure indicates that, while the bound is quite sharp
and has the correct asymptotic behaviour, it might be improved.
Indeed it seems likely that the following conjecture holds:
\begin{conjecture}
  Define \(\overline{b_G^\star} \colon \Rbbb_{\geq 0} \to \Rbbb\) by
  \begin{equation*}
    \overline{b_G^\star}(y) \isdef \frac{1}{8y + 4} ,
  \end{equation*}
  then for all \((x,\alpha) \in \widehat{D}\) one has
  \begin{equation*}
    \widehat{\iota}(x, \alpha) = \int_1^{\infty} \frac{\phi_\alpha(t)}{x+t} \dif\!t
    \leq \overline{b_G^\star}(x + \alpha) .
  \end{equation*}
\end{conjecture}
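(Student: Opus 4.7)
The plan is to reduce the conjecture to a per-period inequality, mirroring the structure of the proof of Lemma~\ref{lemma:loginterpol-bound} but telescoping to the sharper constant \(4\) rather than \(3\). Iterating Lemma~\ref{lemma:loginterpol-diff} and passing to the limit (as in the proof of Lemma~\ref{lemma:loginterpol}) gives
\begin{equation*}
  \widehat{\iota}(x, \alpha) = \sum_{k=0}^{\infty} K(x+1+k, \alpha), \quad\text{where}\quad K(w, \alpha) \isdef (1-\alpha) \log\tfrac{w+\alpha}{w} + \alpha \log\tfrac{w+\alpha}{w+1} .
\end{equation*}
On the other hand, by partial fractions the conjectured bound itself telescopes:
\begin{equation*}
  \frac{1}{8(x+\alpha)+4} = \sum_{k=0}^{\infty} \frac{1}{8(x+k+\alpha+\tfrac{1}{2})(x+k+\alpha+\tfrac{3}{2})} .
\end{equation*}
Hence it suffices to establish the per-period estimate
\begin{equation*}
  K(w, \alpha) \leq \frac{1}{8(w+\alpha-\tfrac{1}{2})(w+\alpha+\tfrac{1}{2})}, \qquad w \geq 1,\ \alpha \in [0,1], \qquad (\star)
\end{equation*}
since summing over \(k\) then yields the conjecture.

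The boundary cases of \((\star)\) are immediate: at \(\alpha \in \{0, 1\}\) the left-hand side vanishes while the right-hand side is strictly positive. The symmetric case \(\alpha = \tfrac{1}{2}\) reduces to a classical estimate: using \((w+\tfrac{1}{2})^2 = w(w+1) + \tfrac{1}{4}\) and setting \(a = \tfrac{1}{4w(w+1)}\), the inequality \((\star)\) collapses to \(\log(1+a) \leq a\). For general \(\alpha \in (0, 1)\), a direct computation yields \(\tfrac{\partial^2 K}{\partial \alpha^2}(w, \alpha) = -\tfrac{1}{(w+\alpha)^2} < 0\), so \(K(w, \cdot)\) is strictly concave on \([0, 1]\) and vanishes at the endpoints; its maximum is attained at the unique \(\alpha^\star(w) \in (0, 1)\) satisfying \(\log(1 + \tfrac{1}{w}) = \tfrac{1}{w+\alpha^\star}\), with asymptotics \(\alpha^\star(w) = \tfrac{1}{2} + \tfrac{1}{12 w} + O(w^{-2})\) as \(w \to \infty\). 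Moreover, the right-hand side of \((\star)\) is convex in \(\alpha\), so \(\mathrm{LHS} - \mathrm{RHS}\) in \((\star)\) is concave in \(\alpha\), which isolates a unique interior worst-case close to \(\tfrac{1}{2}\).

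The main obstacle is then to verify \((\star)\) at this worst-case, uniformly in \(w \geq 1\). A series expansion at \(\alpha^\star(w)\) gives \(K(w, \alpha^\star(w)) = \tfrac{1}{8 w^2} - \tfrac{1}{8 w^3} + O(w^{-4})\), whereas the right-hand side of \((\star)\) at \(\alpha^\star(w)\) equals \(\tfrac{1}{8 w^2} - \tfrac{1}{8 w^3} + \tfrac{5}{48 w^4} + O(w^{-5})\); the two sides therefore agree through order \(w^{-3}\), leaving a strictly positive gap only of order \(w^{-4}\). Numerical checks (for instance, the gap at \(w = 2\) near \(\alpha \approx 0.55\) is only about \(4 \times 10^{-5}\)) confirm the conjecture but underline its tightness, so the loose pointwise auxiliary estimate producing the constant \(3\) in Lemma~\ref{lemma:loginterpol-bound} cannot suffice. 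A rigorous proof would then likely require either a sharpened auxiliary inequality whose integral over one period yields precisely the factor \(\tfrac{1}{(w+\alpha-\tfrac{1}{2})(w+\alpha+\tfrac{1}{2})}\), or an explicit non-asymptotic remainder analysis of the expansion above uniformly for \(w \geq 1\). Navigating this narrow margin is where the real technical difficulty lies.
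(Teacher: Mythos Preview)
The statement you are attempting to prove is presented in the paper as a \emph{conjecture}, not a theorem; the paper offers no proof, only numerical evidence (Figure~\ref{figure:iotabounds}) supporting the sharper constant \(4\) in place of the proven constant \(3\) from Lemma~\ref{lemma:loginterpol-bound}. So there is no paper proof to compare against.

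Your proposal is not a proof either, and you acknowledge this yourself in the final paragraph. The reduction to the per-period inequality \((\star)\) is correct and natural, the concavity/convexity observations are valid, and the asymptotic matching through order \(w^{-3}\) is accurate and explains precisely why the conjecture is delicate. But you stop exactly where the real work begins: you have not established \((\star)\) for all \(w \geq 1\) and \(\alpha \in [0,1]\), only verified it at the endpoints, at \(\alpha = \tfrac{1}{2}\), and asymptotically for large \(w\). The narrow \(O(w^{-4})\) gap you identify is the whole difficulty, and neither of the two routes you suggest (a sharpened auxiliary inequality or a uniform remainder analysis) is carried out. What you have written is a clear and honest diagnosis of why the conjecture resists the method of Lemma~\ref{lemma:loginterpol-bound}, not a resolution of it.
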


\begin{figure}[hb]
\centering
\includegraphics{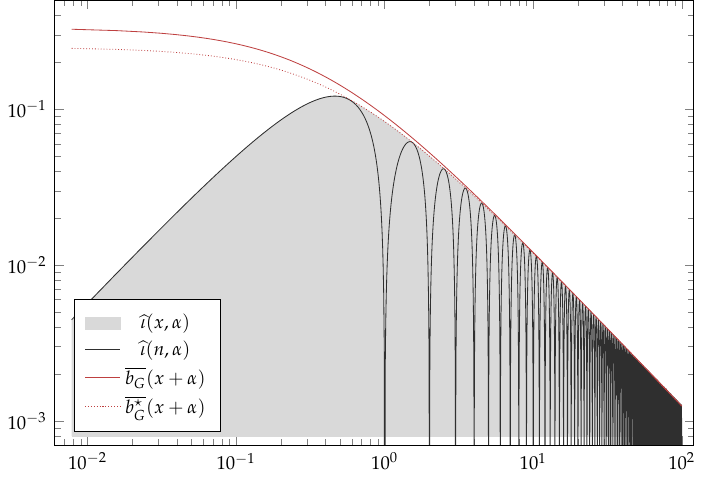}
\caption{Visualisation of the upper bounds for \(\widehat{\iota}(x, \alpha)\) in terms of \(x+\alpha\).}\label{figure:iotabounds}
\end{figure}

In Figure~\ref{figure:mbounds-half} the behaviour
of the bounds
\begin{equation*}
  \underline{b_S^{1/2}}(x)
  \leq m_{1/2}(x) = \log \frac{\Pi(x)}{S_{1/2}(x)}
  \leq \overline{b_S^{1/2}}(x)
\end{equation*}
from Lemma~\ref{lemma:md-halfbound} is shown.
This figure indicates that the bounds are quite sharp for all \(x \in \Rbbb_{\geq 0}\).
Similarily, Figure~\ref{figure:mbounds-zero} depicts the behaviour
of the bounds
\begin{equation*}
  \underline{b_S^{0}}(x)
  \leq m_{0}(x) = \log \frac{\Pi(x)}{S_{0}(x)}
  \leq \overline{b_S^{0}}(x)
\end{equation*}
from Lemma~\ref{lemma:md-zerobound}.
The figure clearly shows that the bounds are quite sharp for \(x \gg 0\),
but necessarily suffer at \(x = 0\) as \(S_{0}(0) = 0\) but \(\Pi(0) = 1\).
Lastly, the behaviour of the bounds
\begin{equation*}
  \underline{b_S^{1}}(x)
  \leq m_{1}(x) = \log \frac{\Pi(x)}{S_{1}(x)}
  \leq \overline{b_S^{1}}(x)
\end{equation*}
from Lemma~\ref{lemma:md-onebound} is shown in Figure~\ref{figure:mbounds-one}.
The figure clearly shows that the bounds are quite sharp for all \(x \in \Rbbb_{\geq 0}\).

\begin{figure}[phtb]
\centering
\includegraphics{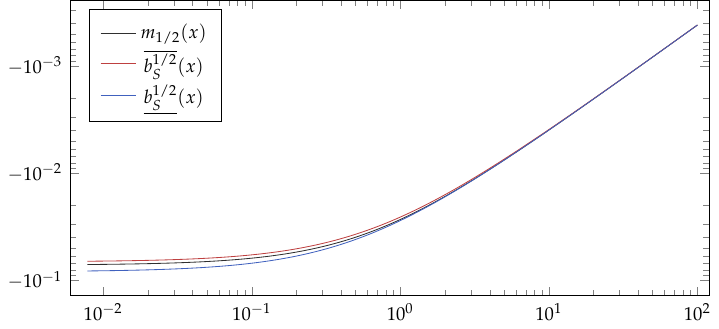}

    
\caption{Visualisation of the two-sided bounds for \(m_{1/2}(x)\) in terms of \(x\).}\label{figure:mbounds-half}
\end{figure}

\begin{figure}[phtb]
\centering
\includegraphics{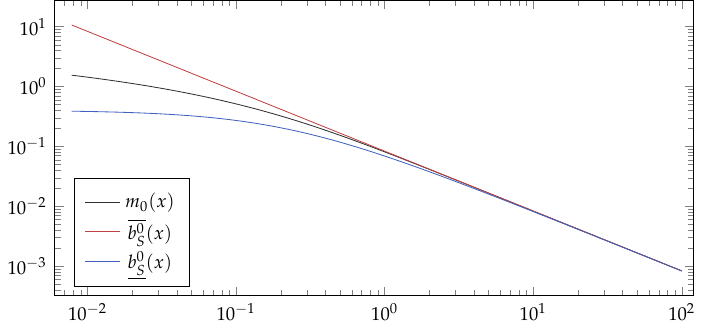}

    
\caption{Visualisation of the two-sided bounds for \(m_{0}(x)\) in terms of \(x\).}\label{figure:mbounds-zero}
\end{figure}

\begin{figure}[phtb]
\centering
\includegraphics{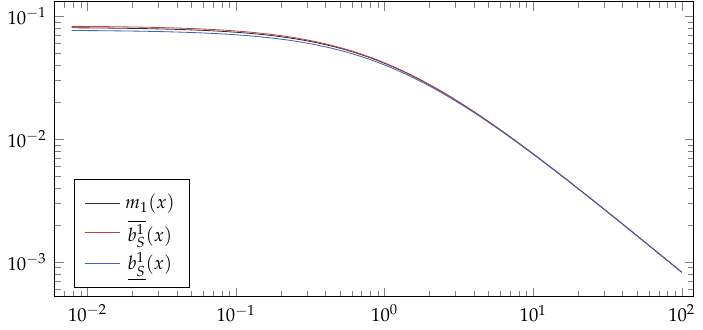}

    
\caption{Visualisation of the two-sided bounds for \(m_{1}(x)\) in terms of \(x\).}\label{figure:mbounds-one}
\end{figure}

Combining the bounds from Lemmas~\ref{lemma:md-halfbound} and~\ref{lemma:md-shiftbound}
one has the bounds
\begin{equation*}
  \underline{b_S^{1/2}}(x) + \underline{q_S^{-1/2}}(x)
  \leq m_{-1/2}(x) = \log \frac{\Pi(x)}{S_{-1/2}(x)}
  \leq \overline{b_S^{1/2}}(x) + \overline{q_S^{-1/2}}(x)
\end{equation*}
and
\begin{equation*}
  \underline{b_S^{1/2}}(x) + \underline{q_S^{2}}(x)
  \leq m_{2}(x) = \log \frac{\Pi(x)}{S_{2}(x)}
  \leq \overline{b_S^{1/2}}(x) + \overline{q_S^{2}}(x) .
\end{equation*}
The behaviour of these bounds are shown in Figures~\ref{figure:mbounds-neghalf}
and~\ref{figure:mbounds-two}.
It is clearly visible that while these bounds are not particularly sharp
for small values of \(x\),
they do show the correct asymptotic behaviour.

\begin{figure}[htb]
\centering
\includegraphics{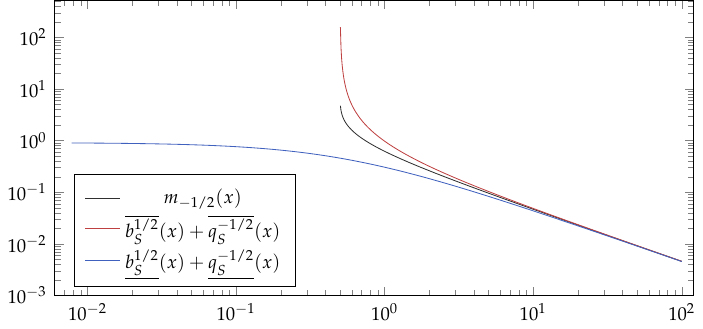}

    
\caption{Visualisation of the two-sided bounds for \(m_{-1/2}(x)\) in terms of \(x\).}\label{figure:mbounds-neghalf}
\end{figure}

\begin{figure}[htb]
\centering
\includegraphics{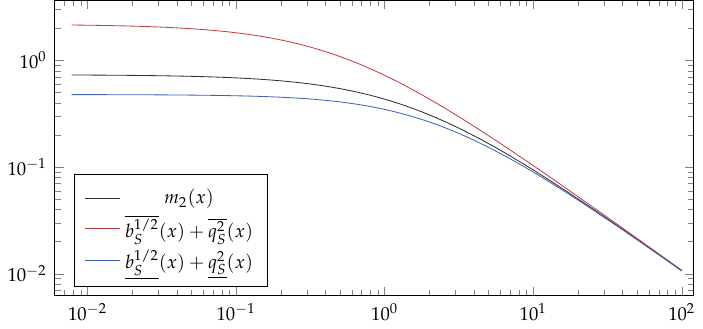}

    
\caption{Visualisation of the two-sided bounds for \(m_{2}(x)\) in terms of \(x\).}\label{figure:mbounds-two}
\end{figure}

Finally,
we also depict the bounds from Lemma~\ref{lemma:badburnsidestirlinggautschi}
and Corollary~\ref{corollary:fairburnsidestirlinggautschi} in Figures~\ref{figure:mhatbounds-half},
\ref{figure:mhatbounds-half2} and~\ref{figure:mhatbounds-vis}.
The figures clearly show that the bounds are quite sharp for all \(x \in \Rbbb_{\geq 0}\)
and that one might expect the following conjecture to hold:
\begin{conjecture}
  One has
  \begin{equation*}
    \widehat{m}_{1/2}(x, \alpha)
    \leq \overline{b_{SG}^\star}(x + \alpha)
  \end{equation*}
  for all \((x,\alpha) \in \widehat{D}_{1/2}\).
  In other words,
  the restriction \(x \geq 1\) in Corollary~\ref{corollary:fairburnsidestirlinggautschi}
  is not necessary.
\end{conjecture}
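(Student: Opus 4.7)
My plan is to split on $x$: the regime $x \geq 1$ is essentially already done, while $x \in [0,1)$ is the interesting case and should be accessible by pushing the tangent-line argument of Lemma~\ref{lemma:goodburnsidestirlinggautschi} a little further.

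For $x \geq 1$, the conjecture follows directly from Corollary~\ref{corollary:fairburnsidestirlinggautschi} together with the monotonicity of $K_x$ in $x$ asserted in Lemma~\ref{lemma:goodburnsidestirlinggautschi}. Indeed, $K_x \geq K_1 = 18 - 12\exp(\tfrac{1}{48})$ for all $x \geq 1$, so
\[
  \widehat{m}_{1/2}(x,\alpha) \leq \frac{1}{12(x+\alpha) + K_x} \leq \frac{1}{12(x+\alpha) + K_1} = \overline{b_{SG}^\star}(x+\alpha) .
\]
So the whole difficulty lies in the compact range $x \in [0, 1)$.

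For $x \in [0,1)$, fix $x$ and set $g(\alpha) \isdef \widehat{m}_{1/2}(x,\alpha)$ and $h(\alpha) \isdef \overline{b_{SG}^\star}(x+\alpha)$. By Lemma~\ref{lemma:burnsidestirlinggautschi}, $g$ is concave on $[0,1]$ with maximum at $\alpha = \tfrac{1}{2}$, while $h$ is convex and strictly decreasing, so $g - h$ is concave. At the symmetry point $\alpha = \tfrac{1}{2}$, Lemma~\ref{lemma:burnsidestirlinggautschihalfbound} gives $g(\tfrac{1}{2}) \leq \tfrac{1}{12x+12} = \tfrac{1}{12(x+1/2)+6}$, which is strictly below $h(\tfrac{1}{2}) = \tfrac{1}{12(x+1/2)+K_1}$ because $K_1 < 6$. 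Concavity of $g - h$ then reduces the problem to bounding $g - h$ at the interior critical point $\alpha^\star = \alpha^\star(x) \in (\tfrac{1}{2}, 1)$ where
\[
  g'(\alpha^\star) = \log\frac{x+1}{x+\alpha^\star+\tfrac{1}{2}} = -\frac{12}{(12(x+\alpha^\star) + K_1)^2} = h'(\alpha^\star) ,
\]
since the endpoints $\alpha \in \{0,1\}$ present no difficulty (they can be checked in closed form from the definitions of $\widehat{\Pi}$ and $S_{1/2}$, giving a comfortable margin).

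The main obstacle is controlling the maximum $g(\alpha^\star(x)) - h(\alpha^\star(x))$ over $x \in [0,1)$: the numerical comparisons in Section~\ref{sec:numcomp} suggest the margin is very thin near $x = 0$, so naively tracking $\alpha^\star(x)$ through the implicit equation above does not leave much analytic slack. A cleaner route I would pursue is to sharpen Lemma~\ref{lemma:md-onebound}, and hence Lemma~\ref{lemma:burnsidestirlinggautschihalfbound}, so as to replace the estimate $\widehat{m}_{1/2}(x,\tfrac{1}{2}) \leq \tfrac{1}{12x+12}$ by $\widehat{m}_{1/2}(x,\tfrac{1}{2}) \leq \tfrac{1}{12x + 12 + \delta(x)}$ with $\delta(x) \geq 0$ chosen so that feeding this stronger estimate into the midpoint-tangent construction of Lemma~\ref{lemma:goodburnsidestirlinggautschi} produces a refined $\widetilde{K}_x$ satisfying $\widetilde{K}_x \geq K_1$ uniformly in $x \geq 0$. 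The flexibility for such a $\delta(x)$ comes from the fact that the proof of Lemma~\ref{lemma:md-onebound} uses $(x+k+\tfrac12)^2 - t^2 \leq (x+k+\tfrac{\sqrt{10}-3}{2})(x+k+\tfrac{\sqrt{10}-3}{2}+1)$, an inequality that is slack when $k=1$ and $x$ is small, precisely where we need extra room. If this refinement succeeds the rest of the argument goes through verbatim; if it fails, the fallback is a careful case-by-case monotonicity argument in $x$ on $[0,1]$, verifying that $g(\alpha^\star(x)) - h(\alpha^\star(x))$ is monotone enough that its known sign at $x=1$ extends down to $x=0$.
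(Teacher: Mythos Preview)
The statement is presented in the paper as an open \emph{conjecture}: the paper offers no proof, only the numerical evidence in Section~\ref{sec:numcomp} suggesting that the restriction \(x \geq 1\) in Corollary~\ref{corollary:fairburnsidestirlinggautschi} may be dropped. There is therefore no paper proof against which to compare your proposal.

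Your proposal is not a proof either, but a plan of attack. The case \(x \geq 1\) is indeed exactly Corollary~\ref{corollary:fairburnsidestirlinggautschi}, as you note. For \(x \in [0,1)\) your concavity reduction is sound in principle, but both completion strategies you describe---sharpening the \(k=1\) term of Lemma~\ref{lemma:md-onebound} to extract a usable \(\delta(x)\), or a monotonicity argument in \(x\)---are left entirely unexecuted. Since you yourself flag the margin near \(x=0\) as ``very thin,'' neither route is guaranteed to close, and what you have is a reasonable sketch of how one might try to settle what the paper leaves as an open problem, not a proof.
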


\begin{figure}[phtb]
\centering
\includegraphics{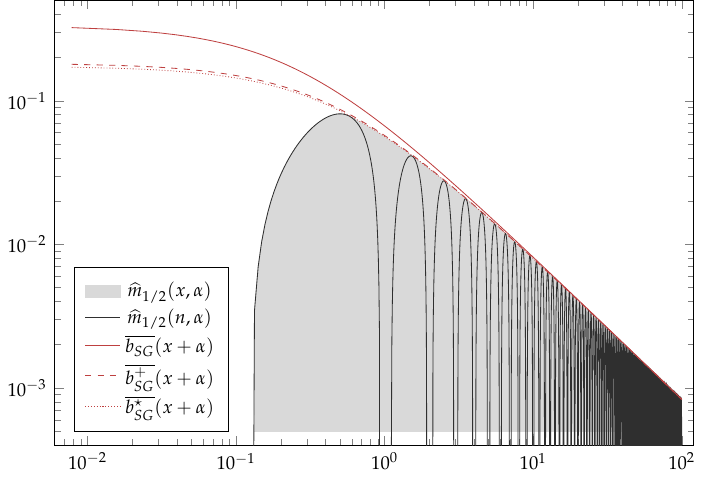}
\caption{Visualisation of the upper bounds for \(\widehat{m}_{1/2}(x, \alpha)\) in terms of \(x+\alpha\).}\label{figure:mhatbounds-half}
\end{figure}

\begin{figure}[phtb]
\centering
\includegraphics{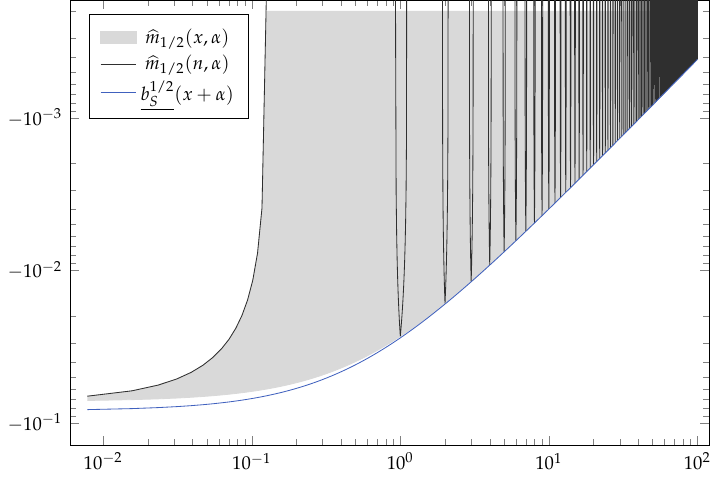}
\caption{Visualisation of the lower bounds for \(\widehat{m}_{1/2}(x, \alpha)\) in terms of \(x+\alpha\).}\label{figure:mhatbounds-half2}
\end{figure}

\begin{figure}[htb]
\centering
\includegraphics{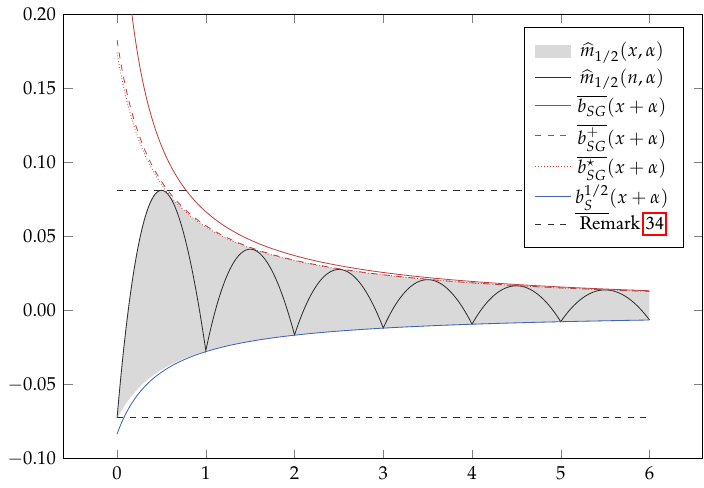}
\caption{Visualisation of the two-sided bounds for \(\widehat{m}_{1/2}(x, \alpha)\) in terms of \(x+\alpha\).}\label{figure:mhatbounds-vis}
\end{figure}

\section{Conclusion}

In this article, we have derived simple but reasonably effective two-sided bounds
for piecewise logarithmic interpolations of the pi function.
As far as reasonable we were able to use arguments in the proofs that are elementary,
such that the proofs themselves can also be considered simple.
Finally, we provided some numerical comparisons to depict the effectiveness of the bounds
and conjectured that two results might be improved further.

\section*{Acknowledgements}

The author wishes to thank Helmut Harbrecht and Christoph Schwab
for their input, which helped inspire parts of and shape this work.
The author also wishes to thank Carol Clarke for proof-reading the manuscript.

\bibliographystyle{plain}
\bibliography{refs}

\end{document}